\DeclareMathOperator*\uplim{\overline{lim}}
\begin{document}

\newcommand{\bs}{\boldsymbol}

\newcommand{\mb}{\mathbb}

\newtheorem{theorem}{Theorem}
\newtheorem{lemma}[theorem]{Lemma}
\newtheorem{claim}[theorem]{Claim}
\newtheorem{cor}[theorem]{Corollary}
\newtheorem{conj}[theorem]{Conjecture}
\newtheorem{prop}[theorem]{Proposition}
\newtheorem{definition}[theorem]{Definition}
\newtheorem{question}[theorem]{Question}
\newtheorem{example}[theorem]{Example}
\newcommand{\hh}{{{\mathrm h}}}
\newtheorem{remark}[theorem]{Remark}

\numberwithin{equation}{section}
\numberwithin{theorem}{section}
\numberwithin{table}{section}
\numberwithin{figure}{section}

\def\sssum{\mathop{\sum\!\sum\!\sum}}
\def\ssum{\mathop{\sum\ldots \sum}}
\def\iint{\mathop{\int\ldots \int}}

\newcommand{\diam}{\operatorname{diam}}

\def\squareforqed{\hbox{\rlap{$\sqcap$}$\sqcup$}}
\def\qed{\ifmmode\squareforqed\else{\unskip\nobreak\hfil
\penalty50\hskip1em \nobreak\hfil\squareforqed
\parfillskip=0pt\finalhyphendemerits=0\endgraf}\fi}

\newfont{\teneufm}{eufm10}
\newfont{\seveneufm}{eufm7}
\newfont{\fiveeufm}{eufm5}
%
%
\newfam\eufmfam
     \textfont\eufmfam=\teneufm
\scriptfont\eufmfam=\seveneufm
     \scriptscriptfont\eufmfam=\fiveeufm
%
%
\def\frak#1{{\fam\eufmfam\relax#1}}

\newcommand{\bflambda}{{\boldsymbol{\lambda}}}
\newcommand{\bfmu}{{\boldsymbol{\mu}}}
\newcommand{\bfxi}{{\boldsymbol{\eta}}}
\newcommand{\bfrho}{{\boldsymbol{\rho}}}

\def\eps{\varepsilon}

\def\fK{\mathfrak K}
\def\fT{\mathfrak{T}}
\def\fL{\mathfrak L}
\def\fR{\mathfrak R}

\def\fA{{\mathfrak A}}
\def\fB{{\mathfrak B}}
\def\fC{{\mathfrak C}}
\def\fM{{\mathfrak M}}
\def\fS{{\mathfrak  S}}
\def\fU{{\mathfrak U}}
\def\fQ{{\mathfrak Q}}

\def\sssum{\mathop{\sum\!\sum\!\sum}}
\def\ssum{\mathop{\sum\ldots \sum}}
\def\dsum{\mathop{\quad \sum \qquad \sum}}
\def\iint{\mathop{\int\ldots \int}}
 
\def\T {\mathsf {T}}
\def\Tor{\mathsf{T}_d}
\def\Tore{\widetilde{\mathrm{T}}_{d} }

\def\sM {\mathsf {M}}
\def\sL {\mathsf {L}}
\def\sK {\mathsf {K}}
\def\sP {\mathsf {P}}

\def\ss{\mathsf {s}}

\def\Kmnd{\cK_d(m,n)}
\def\Kmnp{\cK_p(m,n)}
\def\Kmnq{\cK_q(m,n)}

\def \btheta{\bm{\vartheta}}
 
\def \balpha{\bm{\alpha}}
\def \bbeta{\bm{\beta}}
\def \bgamma{\bm{\gamma}}
\def \bdelta{\bm{\delta}}
\def \bzeta{\bm{\zeta}}
\def \blambda{\bm{\lambda}}
\def \bchi{\bm{\chi}}
\def \bphi{\bm{\varphi}}
\def \bpsi{\bm{\psi}}
\def \bxi{\bm{\xi}}
\def \bnu{\bm{\nu}}
\def \bomega{\bm{\omega}}

\def \bell{\bm{\ell}}

\def\eqref#1{(\ref{#1})}

\def\vec#1{\mathbf{#1}}

\newcommand{\abs}[1]{\left| #1 \right|}

\def\Zq{\mathbb{Z}_q}
\def\Zqx{\mathbb{Z}_q^*}
\def\Zd{\mathbb{Z}_d}
\def\Zdx{\mathbb{Z}_d^*}
\def\Zf{\mathbb{Z}_f}
\def\Zfx{\mathbb{Z}_f^*}
\def\Zp{\mathbb{Z}_p}
\def\Zpx{\mathbb{Z}_p^*}
\def\cM{\mathcal M}
\def\cE{\mathcal E}
\def\cH{\mathcal H}

\def\le{\leqslant}

\def\ge{\geqslant}

\def\sfB{\mathsf {B}}
\def\sfC{\mathsf {C}}
\def\sfG{\mathsf {G}}
\def\sfS{S}
\def\sfI{\mathsf {I}}
\def\L{\mathsf {L}}
\def\FF{\mathsf {F}}

\def\sE {\mathscr{E}}
\def\sF {\mathscr{F}}
\def\sG {\mathscr{G}}
\def\sQ {\mathscr{Q}}
\def\sS {\mathscr{S}}

\def\cA{{\mathcal A}}
\def\cB{{\mathcal B}}
\def\cC{{\mathcal C}}
\def\cD{{\mathcal D}}
\def\cE{{\mathcal E}}
\def\cF{{\mathcal F}}
\def\cG{{\mathcal G}}
\def\cH{{\mathcal H}}
\def\cI{{\mathcal I}}
\def\cJ{{\mathcal J}}
\def\cK{{\mathcal K}}
\def\cL{{\mathcal L}}
\def\cM{{\mathcal M}}
\def\cN{{\mathcal N}}
\def\cO{{\mathcal O}}
\def\cP{{\mathcal P}}
\def\cQ{{\mathcal Q}}
\def\cR{{\mathcal R}}
\def\cS{{\mathcal S}}
\def\cT{{\mathcal T}}
\def\cU{{\mathcal U}}
\def\cV{{\mathcal V}}
\def\cW{{\mathcal W}}
\def\cX{{\mathcal X}}
\def\cY{{\mathcal Y}}
\def\cZ{{\mathcal Z}}
\newcommand{\rmod}[1]{\: \mbox{mod} \: #1}

\def\cg{{\mathcal g}}

\def\vy{\mathbf y}
\def\vr{\mathbf r}
\def\vx{\mathbf x}
\def\va{\mathbf a}
\def\vb{\mathbf b}
\def\vc{\mathbf c}
\def\ve{\mathbf e}
\def\vf{\mathbf f}
\def\vg{\mathbf g}
\def\vh{\mathbf h}
\def\vk{\mathbf k}
\def\vm{\mathbf m}
\def\vz{\mathbf z}
\def\vu{\mathbf u}
\def\vv{\mathbf v}

\def\e{{\mathbf{\,e}}}
\def\ep{{\mathbf{\,e}}_p}
\def\eq{{\mathbf{\,e}}_q}

\def\Tr{{\mathrm{Tr}}}
\def\Nm{{\mathrm{Nm}}}

 \def\SS{{\mathbf{S}}}

\def\lcm{{\mathrm{lcm}}}

\def\pow{\tau}

 \def\0{{\mathbf{0}}}

\def\({\left(}
\def\){\right)}
\def\l|{\left|}
\def\r|{\right|}
\def\fl#1{\left\lfloor#1\right\rfloor}
\def\rf#1{\left\lceil#1\right\rceil}
\def\sumstar#1{\mathop{\sum\vphantom|^{\!\!*}\,}_{#1}}

\def\mand{\qquad \mbox{and} \qquad}

\def\tblue#1{\begin{color}{blue}{{#1}}\end{color}}




\hyphenation{re-pub-lished}

\mathsurround=1pt

\def\bfdefault{b}

\def \F{{\mathbb F}}
\def \K{{\mathbb K}}
\def \N{{\mathbb N}}
\def \Z{{\mathbb Z}}
\def \P{{\mathbb P}}
\def \Q{{\mathbb Q}}
\def \R{{\mathbb R}}
\def \C{{\mathbb C}}
\def\Fp{\F_p}
\def \fp{\Fp^*}

 \def \xbar{\overline x}


\title{Large Weyl sums and Hausdorff dimension}

\author[R. C. Baker] {Roger C.~Baker} 
\address{Department of Mathematics, Brigham Young University, 
Provo, UT 84602, USA} 
\email{baker@math.byu.edu}

 \author[C. Chen] {Changhao Chen}
\address{Center  for Pure Mathematics, School of Mathematical Sciences, Anhui University, Hefei 230601, China}
\email{chench@ahu.edu.cn}

 \author[I. E. Shparlinski] {Igor E. Shparlinski}
\address{Department of Pure Mathematics, University of New South Wales,
Sydney, NSW 2052, Australia}
\email{igor.shparlinski@unsw.edu.au}

\begin{abstract}  We obtain the exact value of the Hausdorff dimension of the set 
of coefficients of Gauss sums which  for a given $\alpha \in (1/2,1)$  achieve the order at least $N^{\alpha}$ 
for infinitely many sum lengths $N$. For Weyl sums with polynomials of degree $d\ge 3$ we obtain a 
new upper bound on the Hausdorff dimension of the set of polynomial coefficients corresponding to 
large values of Weyl sums. Our methods also work for monomial sums, match the previously 
known lower bounds, just giving exact value for the corresponding Hausdorff dimension  
when $\alpha$ is close to $1$. We also obtain a nearly tight  bound in a similar question 
with arbitrary integer sequences of polynomial growth. 
\end{abstract}

\keywords{Weyl sum, Hausdorff dimension}
\subjclass[2010]{11L15, 11K55}

\maketitle

\tableofcontents 

\section{Introduction}

\subsection{Set-up and motivation}  For $\vx = (x_1, \ldots, x_d)\in \T_d$ 
where   
$$
\T_d = [0,1]^d
$$
is the $d$-dimensional unit cube, we define the \textit{Weyl sums} 
of length $N$ as 
$$
\sfS_{d}(\vx; N)  = \sum_{n=1}^{N} \e\(x_1n + \ldots + x_dn^d\)
$$
where  $\e(z) = \exp(2\pi i z)$. These sums were originally introduced by Weyl to study equidistribution of fractional parts of polynomials and later find their applications to the circle method and Riemann zeta function. 

For many applications of Weyl sums, the key problem is to estimate the size of the sum $\sfS_{d}(\vx; N)$.  There are often three kinds of estimates of Weyl sums, namely individual bounds, mean value bounds and almost all bounds.  Despite more than a century since these sums were introduced, their behaviour for individual values of $\vx$ is not well understood, see~\cite{Brud,BD, Kerr}. 

Much more is known about the average behaviour of $\sfS_d(\vx;N)$. The recent advances of  Bourgain, Demeter and Guth~\cite{BDG} (for $d \geqslant 4$) 
and Wooley~\cite{Wool-3} (for $d=3$)  (see also~\cite{Wool}) towards  the optimal form Vinogradov mean value theorem imply the estimate
$$
 N^{s(d)} \le \int_{\Tor} |\sfS_d(\vx; N)|^{2s(d)}d\vx \leqslant  N^{s(d)+o(1)}, 
$$
where   
$$
s(d)=d(d+1)/2
$$
and is best possible up to $o(1)$ in the exponent of $N$.

We study exceptional sets of $\vx \in \Tor$, which generate abnormally large  
Weyl sums $\sfS_d(\vx; N)$. 

The first results concerning the almost all behaviour of Weyl sums are due to Hardy and Littlewood~\cite{HL1} who have estimated  the following special sums  
$$
\sfG(x,N) = \sum_{n=1}^N \e\(x n^2\),
$$ 
in terms of the continued fraction expansion of $x$. Among other things Hardy and Littlewood~\cite{HL1} proved that for almost all $x\in \T$,
$$
\left|\sfG(x,N)\right|\le N^{1/2+o(1)}, \quad \text{as} \quad N\rightarrow \infty.
$$
Their idea has  been expanded upon by Fiedler, Jurkat and K\"orner~\cite[Theorem~2]{FJK} who give  the following optimal lower and upper bounds.
 Suppose that  $\{f(n)\}_{n=1}^{\infty}$ is a non-decreasing sequence of positive numbers. Then for almost all  $x\in \T$  one has 
\begin{equation}
\label{eq:G}
\uplim_{N\rightarrow  \infty} \frac{  |\sfG(x,N)|}{\sqrt{N} f(N)}<\infty\quad  \Longleftrightarrow \quad \sum_{n=1}^{\infty} \frac{1}{n f(n)^{4}} <\infty.
\end{equation}

For the sums $\sfS_2(\vx; N)$, $\vx \in \T_2$, Fedotov and Klopp~\cite{FK} have given a similar result, however 
adding the term $\e(x_1n)$ leads to more cancellations in the sums $\sfS_2(\vx; N)$. Suppose that  $\{g(n)\}_{n=1}^{\infty}$ is a non-decreasing sequence of positive numbers. Then for almost all  $\vx\in \T_2$  one has 
\begin{equation}
\label{eq:QW}
\uplim_{N\rightarrow  \infty} \frac{  \left |\sfS_2(\vx; N)\right |}{\sqrt{N} g(\ln  N)}<\infty \quad  \Longleftrightarrow \quad  \sum_{n=1}^{\infty} \frac{1}{ g(n)^{6}} <\infty.
\end{equation}

It is natural to expect that analogues  of~\eqref{eq:G},~\eqref{eq:QW} hold for Weyl sums $\sfS_d(\vx; N)$ with any $d\ge 3$, however this question seems to be still open.  However, we have the following nearly sharp bounds.  For $d\ge 3$, Chen and Shparlinski~\cite{ChSh-AM,ChSh-IMRN} have  shown in two different ways  in~\cite[Appendix~A]{ChSh-AM},  and~\cite[Theorem~2.1]{ChSh-IMRN}  that  for almost all $\vx\in \Tor$  one has   
 \begin{equation}
\label{eq:Weyl-U}
  \left |\sfS_d(\vx; N)\right |\leqslant  N^{1/2+o(1)} \quad \text{as} \quad N\rightarrow  \infty.
\end{equation}
Recently, Chen, Kerr, Maynard and Shparlinski~\cite[Theorem~2.3]{CKMS} have shown that the exponent $1/2$ is optimal, that is, there exists a constant $c>0$ such that for almost all $\vx\in \T_d$, the inequality 
\begin{equation}
\label{eq:Weyl-L}
|\sfS_{d}(\vx; N)|\ge cN^{1/2} 
\end{equation}
holds for infinitely many $N$.

This motivates our study of  the ``the exceptional sets" of Weyl sums.  Precisely,  for $1/2<\alpha<1$  define
$$
\cE_{d, \alpha}=\{\vx\in \Tor:~|\sfS_{d}(\vx; N)|\geqslant N^{\alpha} \text{ for infinitely many } N\in \N\}.
$$ 

Chen and Shparlinski~\cite[Theorem~1.3]{ChSh-AM} show that for any $d\ge 2$ and $1/2<\alpha<1$ the set $\cE_{d, \alpha}$ is of second category in the sense of Baire, and  the proof of~\cite[Theorem~1.3]{ChSh-AM} implies that the set $\cE_{d, \alpha}$ is a dense subset of $\T_d$. Therefore, the {\it Minkowski dimension\/} (or box dimension) 
of $\cE_{d, \alpha}$ is $d$.  See~\cite{Falconer} for more details on the Minkowski dimension.

The above results~\eqref{eq:QW} and~\eqref{eq:Weyl-U} imply that $\cE_{d, \alpha}$ is of zero Lebesgue measure for all $d\ge 2$ and any $\alpha\in (1/2, 1)$. 
For sets of  Lebesgue measure zero, it is common to use  the {\it Hausdorff dimension\/} to describe their size and structure, and we are going to estimate the  Hausdorff dimension of the set $\cE_{d, \alpha}$ for any $d\ge 2$ and any $\alpha\in (1/2, 1)$.  We first recall the formal definition of Hausdorff dimension, and we refer to~\cite{Falconer, Mattila1995} for 
more details.

\begin{definition} 
\label{def:Hausdorff}
The  Hausdorff dimension of a set $\cF\subseteq \R^{d}$ is defined as 
\begin{align*}
\dim \cF=\inf\Bigl\{s>0:~\forall \, & \eps>0,~\exists \, \{ \cU_i \}_{i=1}^{\infty}, \ \cU_i \subseteq \R^{d},\\
&  \text{such that } \cF\subseteq \bigcup_{i=1}^{\infty} \cU_i \text{ and } \sum_{i=1}^{\infty}\(\diam\cU_i\)^{s}<\eps \Bigr\},
\end{align*}
where 
$$
\diam \cU = \sup\{\| u-v\|:~u,v \in \cU\}
$$
and  $\|w\|$ is the Euclidean norm in $\R^{d}$. 
\end{definition}

We remark that we could also define the set $\cE_{d, \alpha}$ for $\alpha\in (0, 1/2]$. However, by~\eqref{eq:Weyl-L}
the set $\cE_{d, \alpha}$ is of full Lebesgue measure. This, by [17, Theorem 2.5] and the definition of the Hausdorff dimension, is enough to conclude that 
$$
\dim \cE_{d, \alpha}=d.
$$

For an integer $d\ge 3$ and  real    
$\alpha\in (1/2,1)$ some explicit  upper and lower bounds on $\dim \cE_{d, \alpha}$ have been given
in~\cite{ChSh-AM,ChSh-JNT,ChSh-IMRN}. In particular, for any $\alpha\in (1/2, 1)$, there are explicit functions $\mathfrak{l}(d, \alpha), \mathfrak{u}(d, \alpha)$ such that 
$$
0<\mathfrak{l}(d, \alpha)\le \dim \cE_{d, \alpha}\le \mathfrak{u}(d, \alpha)<d.
$$ 
We show more details in the following. For $d\ge 2$, let 
$$
\kappa_d =  \max_{\nu =1, \ldots, d} \min\left\{ \frac{1}{2\nu} ,  \frac{1}{2d-\nu} \right \}.
$$
For  each $1/2<\alpha<1$ and any cube $\fQ \subseteq \Tor$   we have the following lower bounds of $\dim \cE_{d, \alpha}$:
\begin{itemize}
\item[(i)] for $d=2$, 
$$
\dim \cE_{2, \alpha} \cap \fQ  \geqslant 3(1-\alpha)/2; 
$$
\item[(ii)] for $d\geqslant 3$, 
$$
\dim \cE_{d, \alpha} \cap \fQ \geqslant    2 \kappa_d (1-\alpha) .
$$
\end{itemize}

For the upper bound  of $\dim \cE_{d, \alpha}$ with $d\ge 2$ and $1/2<\alpha<1$, we have 
$$
\dim \cE_{d, \alpha}\le \mathfrak{u}(d, \alpha),
$$
where 
$$
\mathfrak{u}(d, \alpha)=\min_{k=0, \ldots, d-1} \frac{(2d^{2}+4d)(1-\alpha)+k(k+1)}{4-2\alpha+2k}.
$$
It is not hard to show $\mathfrak{u}(d, \alpha)<d$ for any $\alpha\in (1/2, 1)$.

In fact for $\alpha \rightarrow 1$ the behaviour of 
$\dim \cE_{d, \alpha}$ is understood reasonably well as  a combination of the above mentioned  lower  and upper bounds, implies that  there are  positive constants $c_1(d), c_2(d)$ such that 
$$
c_1(d) \le \liminf_{\alpha \rightarrow 1}\, (1-\alpha)^{-1}\dim \cE_{d, \alpha}\le\limsup_{\alpha \rightarrow 1} \, (1-\alpha)^{-1}\dim \cE_{d, \alpha}\le c_2(d).
$$

For $\alpha\in (1/2, 1)$,  some heuristic arguments have been given in~\cite{CKMS} towards the following:

\begin{conj}
\label{conj:HD}
For any $\alpha\in (1/2, 1)$, the set  $\cE_{d, \alpha}$ is of Hausdorff dimension 
$$
\dim \cE_{d, \alpha}=\min_{j=1, \ldots, d}\frac{d+1+j\vartheta_j -\sum_{i=1}^{j} \vartheta_i}{1+\vartheta_j} ,
$$
where 
$$
\vartheta_i=\frac{i}{2(1-\alpha)}-1, \qquad i=1, \ldots, d.
$$ 
\end{conj}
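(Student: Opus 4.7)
The plan is to prove Conjecture~\ref{conj:HD} by establishing matching upper and lower bounds, both equal to
$$
\mathfrak{d}(d,\alpha):=\min_{j=1,\ldots,d}\frac{d+1+j\vartheta_j-\sum_{i=1}^j\vartheta_i}{1+\vartheta_j}.
$$
The specific form of the minimum strongly suggests a link to \emph{weighted} Diophantine approximation: for the right index $j^{\ast}=j^{\ast}(d,\alpha)$, extremal points of $\cE_{d,\alpha}$ should admit simultaneous approximations $|x_i-a_i/q|\leqslant q^{-1-\vartheta_i}$ for $i\leqslant j^{\ast}$, while the remaining components $x_{j^{\ast}+1},\ldots,x_d$ remain essentially unconstrained. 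In other words, $j$ indexes how many coefficients are ``locked'' to the major arcs.

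For the upper bound I would decompose dyadically, $N\asymp 2^k$, and note that $\vx\in\cE_{d,\alpha}$ forces $|\sfS_d(\vx;N)|\geqslant N^\alpha$ for infinitely many $k$. The key technical step is to extract from this inequality, by a refined Weyl/van der Corput-type argument or a multi-level major arcs dissection, a \emph{weighted} rational approximation of $\vx$ with common denominator $q$. For each $j\in\{1,\ldots,d\}$ this yields a cover of $\cE_{d,\alpha}$ by rectangles of side $\asymp q^{-1-\vartheta_i}$ in the first $j$ directions and $\asymp 1$ in the remaining $d-j$ directions. Summing $(\diam\cU)^s$ over the admissible pairs $(q,\vec a)$ and invoking Borel--Cantelli across dyadic scales produces the bound $\dim\cE_{d,\alpha}\leqslant f(j)$, where $f(j)$ is the fraction inside the minimum; optimising over $j$ gives $\dim\cE_{d,\alpha}\leqslant\mathfrak{d}(d,\alpha)$.

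For the lower bound, fix the minimiser $j^{\ast}$ and build a Cantor-type set $\cF\subseteq\cE_{d,\alpha}$ of Hausdorff dimension $f(j^{\ast})$. Choose a rapidly growing sequence $q_1<q_2<\cdots$, and at stage $k$ retain only $\vx$ satisfying $|x_i-a_{i,k}/q_k|\leqslant q_k^{-1-\vartheta_i}$ for $i\leqslant j^{\ast}$, for some admissible choice of numerators. A careful count of valid numerators and a computation of the surviving volume let one place a Frostman measure $\mu$ of the correct exponent on $\cF$. A standard major-arc expansion shows that for each $\vx\in\cF$ the Weyl sum at a suitable length $N$ depending on $q_k$ is approximately a complete rational Weyl sum with modulus $q_k$, of size $\gtrsim N^\alpha$; this holds for all $k$, hence infinitely often. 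The mass distribution principle then yields $\dim\cF\geqslant f(j^{\ast})=\mathfrak{d}(d,\alpha)$.

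The main obstacle is the upper bound: passing from the single scalar inequality $|\sfS_d(\vx;N)|\geqslant N^\alpha$ to a \emph{sharp anisotropic} Diophantine characterisation. Present techniques based on Vinogradov's mean value theorem via~\cite{BDG,Wool-3} yield essentially isotropic approximations in all coordinates, and this loss of anisotropy is precisely what produces the gap between $\mathfrak{d}(d,\alpha)$ and the current best upper bound $\mathfrak{u}(d,\alpha)$. Closing it would likely require either a decoupling inequality with coordinate-dependent (anisotropic) weights, or a nested major/minor arcs dissection in which the arcs are scaled differently in each variable $x_i$. By contrast, the lower bound, while technically demanding, appears more accessible since the Diophantine structure can be engineered directly; there the main care is in verifying the Frostman condition on $\mu$ and the non-interference of successive Cantor stages.
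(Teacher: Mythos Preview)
The statement you are attempting to prove is labelled a \emph{Conjecture} in the paper, and the paper does \emph{not} prove it for general $d$: it confirms only the case $d=2$ (Theorem~\ref{thm:Gauss}), while for $d\ge 3$ it merely tightens the upper bound (Theorem~\ref{thm:Weyl}) without reaching the conjectured value. So there is no proof in the paper for you to be compared against. Your proposal is not a proof either, but an outline in which you yourself name the central gap: extracting a sharp \emph{anisotropic} Diophantine approximation from $|\sfS_d(\vx;N)|\ge N^\alpha$. You are right that current Vinogradov/decoupling technology yields only isotropic information, and your suggested remedies (anisotropic decoupling, nested dissections with coordinate-dependent scaling) are research directions, not completed arguments. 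As it stands the upper bound is genuinely open for $d\ge 3$.

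Your lower-bound sketch also hides a real difficulty. For $d=2$ the paper succeeds because the complete rational Gauss sum has exact modulus $\sqrt{p}$ (Lemma~\ref{lem:Gauss}), so the major-arc contribution is automatically of size $N^\alpha$; the Diophantine dimension is then read off from Rynne's theorem (Lemma~\ref{lem:Rynne}) rather than by building a Frostman measure by hand. For $d\ge 3$ your phrase ``a standard major-arc expansion shows \ldots\ the Weyl sum \ldots\ is approximately a complete rational Weyl sum \ldots\ of size $\gtrsim N^\alpha$'' is not justified: as the paper points out in its discussion of ideas (Section~2.4), complete rational Weyl sums of degree $\ge 3$ can vanish identically, so one cannot assert largeness on an arbitrary major arc without substantial additional input. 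Thus both halves of your plan contain unresolved steps, consistent with the statement's status as a conjecture.
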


\subsection{New results and methods}  
In this paper, we confirm the Conjecture~\ref{conj:HD} for $d=2$, and we obtain new upper  bounds of $\dim \cE_{d,\alpha}$ when  $d\ge 3$ and $\alpha$ is close to $1$. Moreover, we also consider the following one parametric family of  exponential sums. Namely, for a real sequence $f(n)$, $n\in \N$,
$ x\in \T$ and $N\in \N$ we denote 
\begin{equation}
\label{eq:sum V} 
V_f(x; N)=\sum_{n=1}^{N}\e\(xf(n)\). 
\end{equation}

Chen and Shparlinski~\cite[Corollary 2.2]{ChSh-IMRN} shows that for any polynomial $f \in \Z[X]$ with $\deg f\ge 2$ we have for almost all $x\in \T$,
$$
\left| V_f(x; N)\right| \le N^{1/2+o(1)} \qquad \text{as} \ N\rightarrow  \infty.
$$

Similarly to the definition of $\cE_{d, \alpha}$, for $\alpha\in (1/2, 1)$ we define the set 
$$
\cF_{f, \alpha}=\{x\in \T:~|V_f(x; N)|\ge N^{\alpha} \text{ for infinitely many $N\in \N$ }\}.
$$  
Perhaps the most interesting sums of this type are sums with monomials $xn^d$, in which 
case we denote this special quantity by $\sF_{d, \alpha}$, that is, 
$$\sF_{d, \alpha}=\left\{x\in \T:~\left|\sum_{n=1}^{N}\e\(xn^d\)\right|\ge N^{\alpha} \text{ for infinitely many $N\in \N$ }\right\}.
$$

Some lower bounds of $\dim \sF_{d, \alpha}$ have been obtained in~\cite[Theorem~1.7]{ChSh-AM}. 
In particular,~\cite[Theorem~1.7]{ChSh-AM}  implies that for $\alpha\in (1/2, 1)$
\begin{equation}
\label{eq:low Gauss} 
\dim \sF_{2, \alpha}\ge 2(1-\alpha),
\end{equation}
and for $d\ge 3$ and $\alpha \in [d/(d+2), 1)$,   
\begin{equation}
\label{eq:low monom} 
\dim \sF_{d, \alpha}\ge \(1+\frac{1}{d}\)(1-\alpha).
\end{equation}

Some heuristic arguments have been given in~\cite[Section~8]{CKMS},  suggesting that in a certain range of $\alpha$  we may have 
$$
\dim \sF_{d, \alpha} =   4(1-\alpha)/d,
$$
which is consistent with~\eqref{eq:low Gauss} and~\eqref{eq:low monom} for $d=2$ and $d=3$.

Moreover, for $\alpha\in (0, 1/2)$ the set  $\sF_{d, \alpha}$ is of positive Lebesgue measure (see~\cite{CKMS} for more details), and hence,
$$
\dim \sF_{d, \alpha} =1.
$$

It is very likely that for $f \in \Z[X]$ the bounds~\eqref{eq:low Gauss} and~\eqref{eq:low monom} can be extended to
the sets $\cF_{f, \alpha}$.

To obtain these results we develop two different approaches:
\begin{itemize}
\item For $\alpha$ close to $1$, we employ the classification of Baker~\cite{Bak0,Bak1} in the form given in~\cite{BCS}.
\item For smaller values of $\alpha$ (which means that  $\alpha$ is close to $1/2$),  and also for sums $V_f(x; N)$ with non-polynomial functions when   
the above classification is not available, we link Hausdorff dimension of the sets $\cE_{d, \alpha}$ and 
$\cF_{f, \alpha}$ to various mean value theorems. 
\end{itemize}
The above arguments are complemented by the use of the Frostman  Lemma
(see~\cite[Corollary~4.12]{Falconer}) and the G\'al--Koksma Theorem~\cite[Theorem~4]{GK}.

\subsection{Notation} 
Throughout the paper, the notations $U = O(V)$, 
$U \ll V$ and $ V\gg U$  are equivalent to $|U|\leqslant c V$ for some positive constant $c$, 
which throughout the paper may depend on the degree $d$, the growth rate $\tau$ and occasionally on the small real positive 
parameter $\varepsilon$ and the real parameter $t$.

For any quantity $V> 1$ we write $U = V^{o(1)}$ (as $V \rightarrow \infty$) to indicate a function of $V$ which 
satisfies $ V^{-\eps} \le |U| \le V^\eps$ for any $\eps> 0$, provided  that $V$ is large enough. One additional advantage 
of using $V^{o(1)}$ is that it absorbs $\log V$ and other similar quantities without changing  the whole 
expression.  

For $m \in \N$, we write $[m]$ to denote the set $\{0, 1, \ldots, m-1\}$.

\section{Results for  sets of very large sums}
 \label{sec:large sums}

\subsection{Multiparametric families of Gauss sums and Weyl sums}
Here we confirm the Conjecture~\ref{conj:HD} for $d=2$, that is, for the  Gauss sums  
 $$
G(\vx; N) = \sum_{n=1}^{N} \e\(x_1n + x_2n^2\),
$$
we improve the previous upper and lower bounds of~\cite{ChSh-AM,ChSh-JNT,ChSh-IMRN}, and  obtain the exact value of the Hausdorff dimension of $\cE_{2, \alpha}$.

\begin{theorem}
\label{thm:Gauss}
For any $\alpha\in (1/2, 1)$ we have 
$$
\dim \cE_{2, \alpha}= \begin{cases} 7/2-3\alpha & \text{if}\ 5/6 \ge \alpha >1/2,\\
 6(1-\alpha)  & \text{if}\  1> \alpha >5/6.
 \end{cases} 
$$
\end{theorem}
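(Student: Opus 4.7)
The plan is to match the formula of Conjecture~\ref{conj:HD} for $d=2$. A direct computation shows that the minimum in the conjecture over $j \in \{1,2\}$ is realised at $j=2$ for $\alpha \in (1/2, 5/6]$, giving $7/2-3\alpha$, and at $j=1$ for $\alpha \in [5/6, 1)$, giving $6(1-\alpha)$. Thus the proof amounts to establishing matching upper and lower bounds, with two different mechanisms producing abnormally large Gauss sums in the two regimes.

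For the upper bound, the starting point is Baker's classification~\cite{Bak0, Bak1} in the refined form of~\cite{BCS}: any $\vx$ with $|G(\vx; N)| \ge N^\alpha$ admits a rational approximation $(a_1/q, a_2/q)$ whose quality fits into one of two types. Type $j=1$ corresponds to simultaneous approximation of both coordinates of $\vx$ by rationals with a common denominator $q$; type $j=2$ to a two-scale approximation that respects the different natural scales $1/N$ and $1/N^2$ of the linear and quadratic parts. Covering $\cE_{2,\alpha}$ by the union of boxes arising from these approximations, over $N \to \infty$ and over admissible $(q, a_1, a_2)$, and optimising the cover cubes against the anisotropic box shape, produces a series in $N$ that converges for $s$ strictly above the corresponding candidate dimension. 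Taking the minimum over the two types yields the claimed upper bound.

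For the lower bound, I would exhibit in each regime an explicit $\limsup$ set $\cL_j \subseteq \cE_{2,\alpha}$ of the correct dimension. For $j=1$, $\cL_1$ is the set of $\vx$ simultaneously well-approximable in both coordinates with a common denominator at the exponent $\vartheta_1 = 1/(2(1-\alpha))-1$; the classical Jarnik theorem in the plane yields $\dim \cL_1 = 3/(1+\vartheta_1) = 6(1-\alpha)$. For $j=2$, $\cL_2$ has a two-scale structure reflecting the different orders of $x_1 n$ and $x_2 n^2$, and its dimension $7/2 - 3\alpha$ follows from the Frostman lemma~\cite[Corollary~4.12]{Falconer} together with the G\'al--Koksma theorem~\cite[Theorem~4]{GK}. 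In both cases the containment $\cL_j \subseteq \cE_{2,\alpha}$ is checked by decomposing $G(\vx; N)$ into a complete Gauss sum modulo $q$ times a smooth exponential integral and observing that this is of order $\sqrt{q} \cdot N/q = N^\alpha$ at suitably chosen $N$.

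The main obstacle is the upper bound in the $j=2$ regime (small $\alpha$). A naive cover of the major-arc boxes by cubes of one of the natural scales $1/N$ or $1/N^2$ gives only $\dim \le 4 - 3\alpha$; reaching the sharp value $7/2 - 3\alpha$ requires cubes at an intermediate scale of order $N^{-3/2}$ together with careful control on the distribution of the admissible denominators, which is precisely what the refined Baker classification from~\cite{BCS} supplies. The lower bound in the $j=2$ regime is also delicate: the relevant $\limsup$ set is not a standard simultaneous Diophantine approximation set, and the G\'al--Koksma mechanism is needed to obtain the required lower bound on its Hausdorff dimension.
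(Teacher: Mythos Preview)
There are genuine gaps in both directions, and you have the roles of the main tools inverted.

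For the upper bound, Baker's structure theorem (\cite{Bak0,Bak1}; Lemma~5.1 here) does not produce two types of approximation: it always yields a single anisotropic box $|x_i - a_i/q| \ll (N/A)^2 q^{-1} N^{-i}$ with common denominator $q$. The two regimes in the answer arise not from a case split in Baker but from a $\min\{1,t\}$ in the resulting measure estimate. More importantly, the Frostman lemma and G\'al--Koksma are the paper's \emph{upper}-bound machinery, not lower-bound: one assumes $t<\dim\cE_{2,\alpha}$, takes a Frostman $t$-measure $\mu$ on $\cE_{2,\alpha}$, uses Baker to prove $\int_{\T_2} |G(\vx;M,N)|^6\,d\mu(\vx)\le N^{6-2t+o(1)}(M+N)^{\min\{1,t\}}$, and then G\'al--Koksma turns this into the $\mu$-a.e.\ bound $|G(\vx;N)|\le N^{1-t/3+\min\{1/6,t/6\}+o(1)}$, forcing $\alpha\le 1-t/3+\min\{1/6,t/6\}$. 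Your direct-covering route can be salvaged, but your diagnosis of the obstacle is wrong: the loss from $7/2-3\alpha$ to $4-3\alpha$ comes from summing over all $N\in\N$ rather than over dyadic $N$, not from the cube scale. The cure is a dyadic reduction (which is exactly what G\'al--Koksma provides), not intermediate-scale cubes; and the \cite{BCS} refinement is only invoked in the paper for $d\ge 3$.

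For the lower bound, your isotropic set $\cL_1$ (both coordinates approximable to exponent $\vartheta_1$) is \emph{not} contained in $\cE_{2,\alpha}$: at $N\approx q^{1/(2(1-\alpha))}$ the continuity error $(N/\sqrt q)\cdot|x_2-a_2/q|N^2\approx N^{\alpha}\cdot q^{-1-\vartheta_1}N^2\approx N^{1+\alpha}$ swamps the main term $N^{\alpha}$, so the inclusion fails. The second coordinate must be approximated at the finer exponent $\vartheta_2=1/(1-\alpha)-1$. The paper uses a \emph{single} anisotropic set $W_\alpha=\{(x_1,x_2):\|qx_1\|<q^{-\vartheta_1},\ \|qx_2\|<q^{-\vartheta_2}\ \text{i.o.\ over primes }q\}$, checks $W_\alpha\subseteq\cE_{2,\alpha-\varepsilon}$ via the exact Gauss-sum evaluation plus a continuity lemma, and reads off $\dim W_\alpha=\min\{6(1-\alpha),\,7/2-3\alpha\}$ from Rynne's theorem~\cite{Rynne} on anisotropic simultaneous approximation. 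G\'al--Koksma, being an $L^p$-to-pointwise upper-bound tool, has no role on this side.
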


For $d\ge 3$, by applying the same idea as in the proof of Theorem~\ref{thm:Gauss} and combining some other new ideas, we obtain the following upper bound, which improves the previous bound of~\cite[Theorem~1.1]{ChSh-JNT} when $\alpha$ is close to $1$.  
For $d\ge 3$  we  always write 
\begin{equation}
\label{eq:D}
D=\min\{2^{d-1}, 2d(d-1)\}.
\end{equation}

\begin{theorem}
\label{thm:Weyl}
For $d\ge 3$ and any $\alpha\in (1-1/D, 1)$, where $D$ is given by~\eqref{eq:D}, we have 
$$
\dim \cE_{d, \alpha}\le  \min_{h=1, \ldots, d} \frac{(d^2+1)(1-\alpha)}{h}+\frac{h-1}{2}.
$$
\end{theorem}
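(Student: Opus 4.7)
The plan is to combine the Baker classification of exceptional sets (in the form given in~\cite{BCS}) with a direct covering argument for Hausdorff dimension. The idea is that when $|\sfS_d(\vx; N)| \ge N^\alpha$ with $\alpha$ close to $1$, the classification forces $\vx$ to lie within thin neighbourhoods of rational points whose denominators and numerators are controlled polynomially in $N$, and this gives efficient coverings of the $\limsup$ set $\cE_{d,\alpha}$.

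More concretely, for each $h \in \{1, \ldots, d\}$, I would invoke the form of Baker's classification in which, for each $\vx$ with $|\sfS_d(\vx; N)| \ge N^\alpha$, either $\vx$ lies in a small residual set that can be handled separately, or there exist an integer $q$ with $1 \le q \le N^{A(d,\alpha)}$ and integers $a_{d-h+1}, \ldots, a_d$ such that
\[
\bigl| q x_j - a_j \bigr| \le N^{-j+B(d,\alpha)} \qquad (j=d-h+1, \ldots, d),
\]
with $A(d,\alpha)$ and $B(d,\alpha)$ explicit and linear in $1-\alpha$. The hypothesis $\alpha > 1 - 1/D$ with $D$ as in~\eqref{eq:D} guarantees that this classification applies meaningfully with the strongest available Weyl/Vinogradov-type exponents.

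Next I would form a covering of $\{\vx \in \Tor :\ |\sfS_d(\vx; N)| \ge N^\alpha\}$ by boxes whose sides in the $h$ constrained directions $x_{d-h+1}, \ldots, x_d$ are of order $N^{-j+B(d,\alpha)}/q$ and whose sides in the remaining $d-h$ free directions are $O(1)$. Subdividing each box into cubes of the smallest side $\ell_N = N^{-d+B(d,\alpha)}/q$, summing cube diameters raised to the power $s$, and counting over the admissible parameters $(q, a_{d-h+1}, \ldots, a_d)$, I would obtain a total $s$-dimensional pre-measure at scale $N$ of order $N^{\Phi_h(s,\alpha) + o(1)}$ for an explicit exponent $\Phi_h$. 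Here the product of the different approximation tolerances $N^{-j+B}/q$ for $j = d-h+1, \ldots, d$ yields the arithmetic progression
\[
\sum_{j=d-h+1}^{d}(d-j) = \frac{h(h-1)}{2},
\]
which accounts for the additive term $(h-1)/2$ after dividing by $h$, while the parameter space and subdivision of the free directions contribute a linear term in $(1-\alpha)$. A direct computation then shows $\Phi_h(s,\alpha) < 0$ whenever $s > \frac{(d^2+1)(1-\alpha)}{h} + \frac{h-1}{2}$. Since $\cE_{d,\alpha}$ is a $\limsup$ set, summing along a dyadic subsequence of $N$ yields convergence of the $s$-dimensional Hausdorff pre-measure for all such $s$, and taking the minimum over $h \in \{1, \ldots, d\}$ gives the asserted bound.

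The main obstacle will be bookkeeping the exponents $A(d,\alpha)$ and $B(d,\alpha)$ in the classification of~\cite{BCS} and verifying that, after dividing by $h$, the linear contribution in $(1-\alpha)$ from the parameter space size together with the subdivision of the $d-h$ free directions is exactly $(d^2+1)(1-\alpha)/h$. A secondary technical point is checking that the restriction $\alpha > 1-1/D$ is sharp enough to invoke the underlying input bounds, where $D = \min\{2^{d-1}, 2d(d-1)\}$ reflects Weyl differencing for small $d$ and the Vinogradov mean value theorem for larger $d$; one must also separately bound the dimension of the residual exceptional set produced by the classification, which should be handled by a cruder argument such as the one already used in~\cite{ChSh-JNT}.
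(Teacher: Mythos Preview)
Your strategy of using the Baker-type classification from~\cite{BCS} to cover level sets and then exploiting the $\limsup$ structure is the right idea, but the form of the classification you describe is not correct, and this matters for the final exponent. The result (stated in the paper as Lemma~\ref{lem:Struct Large Weyl}) constrains \emph{all} $d$ coordinates at once: if $|\sfS_d(\vx;N)|\ge A>N^{1-1/D+\varepsilon}$ then there exist pairwise coprime $q_2,\ldots,q_d$ with $q_j$ being $j$-th power full for $j\ge 3$, $\prod q_j^{1/j}\le N^{1+o(1)}A^{-1}$, and integers $b_1,\ldots,b_d$ with $|x_j-b_j/(q_2\cdots q_d)|\le (NA^{-1})^d N^{-j+o(1)}\prod_i q_i^{-d/i}$ for every $j=1,\ldots,d$. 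There is no residual set, and the parameter $h$ does not appear in the classification at all; it enters only afterwards, as a choice of which side length to use when covering the resulting anisotropic box by cubes (cf.~\eqref{eq:mu(R)}). Crucially, the factor $d^2+1$ in the theorem comes from the power-full arithmetic of the $q_j$: after dyadic localisation the key inequality is $j\alpha_j\le d\alpha_d$ with $\alpha_d=d+1/d-t$, giving $\prod Q_j^{\alpha_j}\le N^{(1-\alpha)(d^2+1-dt)}$. A model with a single denominator $q$, only $h$ constrained coordinates, and $d-h$ ``free'' $O(1)$ sides does not reproduce this exponent.

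The second gap is the reduction to a dyadic subsequence. Since $|\sfS_d(\vx;N)-\sfS_d(\vx;2^k)|$ can be as large as $2^{k-1}\gg N^\alpha$, knowing the sum is large at some $N\in[2^{k-1},2^k)$ says nothing at $N=2^k$. The paper handles this by a genuinely different mechanism: it extends the classification to shifted sums $\sfS_d(\vx;M,N)$ (Lemma~\ref{lem:structure of large S}), proves a mean-value bound for $\int_{\T_d}|\sfS_d(\vx;M,N)|^{d^2+1}\,d\mu(\vx)$ with respect to an arbitrary Frostman measure $\mu$ (Lemma~\ref{lem:Lp-d>2}), and then invokes the G\'al--Koksma theorem (Lemma~\ref{lem:G-K for our setting}) to convert this into a pointwise bound valid for every $N$ at $\mu$-almost every $\vx$; combined with the Frostman lemma this yields the dimension bound. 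If you prefer a direct covering argument, it can be salvaged by using a polynomial subsequence $N_i=\lfloor i^C\rfloor$ with $C>1/(1-\alpha)$ so that $N_{i+1}-N_i=o(N_i^\alpha)$, together with the correct full-coordinate classification above; the $s$-content at level $N$ then works out to $N^{(d^2+1)(1-\alpha)-sh+h(h-1)/2+o(1)}$, which recovers the stated bound.
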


\subsection{One parametric families of Weyl sums with real polynomials} 
We now obtain  upper bounds on $\dim \cF_{f, \alpha}$,  which in the case of monomial sums and large values of $\alpha$ matches the lower bounds~\eqref{eq:low Gauss} and~\eqref{eq:low monom}. Depending on using different methods, 
 we divide the results on $\dim \cF_{f, \alpha}$  into two subsections.

\begin{theorem}   
\label{thm:F_f} Let $f\in \R[X]$ be a polynomial of degree $d$.
For any $\alpha\in (1-1/D, 1)$,  where $D$ is given by~\eqref{eq:D},  we have 
$$
\dim \cF_{f, \alpha}\le
\begin{cases}  
2(1-\alpha)  & \text{if}\  d=2,\\
 \(1+1/d\)(1-\alpha) & \text{if}\ d  \ge 3.
 \end{cases} 
$$
\end{theorem}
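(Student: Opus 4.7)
The plan is to translate the hypothesis $|V_f(x;N)|\ge N^{\alpha}$ into a Diophantine condition on $x$ via a major-arc expansion of $V_f(x;N)$, and then bound $\dim\cF_{f,\alpha}$ by the Hausdorff--Cantelli lemma along a dyadic sequence. Since $\alpha>1-1/D$, either the Weyl inequality (for $D=2^{d-1}$) or the optimal Vinogradov mean value theorem of~\cite{BDG,Wool,Wool-3} (for $D=2d(d-1)$) eliminates minor-arc behaviour of the leading coefficient $xa_d$ of the polynomial $xf$. Consequently, there must exist coprime integers $a$ and $q$ with $1\le q\le N^{d(1-\alpha)+o(1)}$ for which $xa_d$ lies in the corresponding major arc.

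Next I would carry out the major-arc expansion. Splitting $n=qm+r$ with $0\le r<q$, setting $y=xa_d-a/q$, and using the complete-sum bound $|S(a,q)|\ll q^{1-1/d+o(1)}$ together with the $d$-th derivative test applied to the residual oscillatory integral (whose $d$-th derivative equals $d!\,y$), one obtains the standard major-arc estimate
\[
|V_f(x;N)| \ll q^{-1/d+o(1)} \min\bigl(N,|y|^{-1/d}\bigr).
\]
Combined with $|V_f(x;N)|\ge N^{\alpha}$, this forces $q\le N^{d(1-\alpha)+o(1)}$ and $|y|\le N^{-d\alpha+o(1)}/q$, hence
\[
\left|x-\frac{a}{q a_d}\right|\ll \frac{N^{-d\alpha+o(1)}}{q}.
\]

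For $d=2$ this directly yields the claim: along $N_k=2^k$ the set $\cF_{f,\alpha}$ is contained in a $\limsup$ of unions $E_k$ of intervals centred at $a/(qa_d)$ of radius $\delta_{q,k}\ll N_k^{-d\alpha+o(1)}/q$, with $q$ running up to $N_k^{d(1-\alpha)+o(1)}$. A direct $s$-content computation gives $\mathcal{H}^s(E_k)\ll N_k^{2d(1-\alpha)-ds+o(1)}$, which is summable in $k$ for every $s>2(1-\alpha)$, and the Hausdorff--Cantelli lemma concludes $\dim\cF_{f,\alpha}\le 2(1-\alpha)$.

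The main obstacle is the case $d\ge 3$, where the required bound $(1+1/d)(1-\alpha)$ is strictly sharper than the $2(1-\alpha)$ produced above. To close this gap I would invoke Baker's classification in the form of~\cite{BCS}, which refines the major-arc structure by separating scales according to the size of $|y|$. In the close regime $|y|\le N^{-d}$ the $d$-th derivative test saturates and one must instead use the $(d-1)$-th derivative test; since the $(d-1)$-th derivative of the residual polynomial involves $xa_{d-1}$, the hypothesis $|V_f(x;N)|\ge N^{\alpha}$ then additionally constrains $xa_{d-1}$ to be close to a rational compatible with the approximation of $xa_d$. Because $x$ is a single real parameter, such simultaneous rational approximation across coefficients is restrictive, and a careful accounting over Baker's scales --- with the count of admissible $(a,q)$ reduced to $q\le N^{d(1-\alpha)/(d-1)+o(1)}$ in the close regime --- should produce the desired $(1+1/d)(1-\alpha)$. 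Getting this bookkeeping right, and in particular balancing the close and moderate regimes against one another, is the technical heart of the argument.
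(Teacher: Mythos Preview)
Your overall strategy---extract a Diophantine constraint on $x$ from the hypothesis $|V_f(x;N)|\ge N^{\alpha}$ and then run a Hausdorff--Cantelli argument along $N_k=2^k$---is a legitimate alternative to the paper's route via the Frostman lemma and the G\'al--Koksma theorem, and for $d=2$ it yields the correct bound $2(1-\alpha)$. (The passage from all $N$ to dyadic $N_k$ is harmless here because the Diophantine constraints are essentially constant on dyadic blocks.) The paper instead bounds the $\mu$-measure of the large-sum set for a Frostman measure $\mu$ (Lemma~\ref{lem:frequency-f}), converts this to a mean-value estimate (Lemmas~\ref{lem:mean-d=2} and~\ref{lem:mean-d>2}), and appeals to G\'al--Koksma; the two approaches are dual.

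For $d\ge 3$ there is a genuine gap. You correctly note that the crude major-arc bound $|V_f(x;N)|\ll q^{-1/d+o(1)}\min(N,|y|^{-1/d})$ only gives $\dim\cF_{f,\alpha}\le 2(1-\alpha)$, but your proposed remedy---using the $(d-1)$-th derivative test in the close regime to constrain $xa_{d-1}$ as well---cannot work. For the monomial $f(n)=n^d$ one has $a_{d-1}=0$, so there is no additional constraint available from that coefficient, yet Corollary~\ref{cor:Ed} shows that the bound $(1+1/d)(1-\alpha)$ is sharp precisely for monomials. More generally, the theorem places no hypothesis on the lower-order coefficients of $f$, so any argument relying on them is doomed.

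The actual saving for $d\ge 3$ comes not from further Diophantine constraints on $x$ but from the \emph{arithmetic structure of the denominator}. Baker's classification in the form of Lemma~\ref{lem:Struct Large Weyl} (this is what~\cite{BCS} supplies) shows that when $|V_f(x;N)|\ge B$ the approximating denominator factors as $q=q_2q_3\cdots q_d$ with $q_j$ being $j$-th power-full for $j\ge 3$, subject to $\prod_{j} q_j^{1/j}\le (N/B)^{1+o(1)}$, and with the approximation radius $B^{-d}\prod_{j} q_j^{-d/j}$. Since there are only $O(Q_j^{1/j})$ many $j$-th power-full integers up to $Q_j$, the count of admissible denominators is far smaller than a naive $q\le N^{d(1-\alpha)}$ would suggest. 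Feeding this into your Hausdorff--Cantelli scheme (the computation is exactly that of Lemma~\ref{lem:frequency-f} with $\mu$-measure replaced by $s$-content) gives an $s$-content of order $N^{(d+1)(1-\alpha)-ds+o(1)}$, which is summable for every $s>(1+1/d)(1-\alpha)$. The paper carries out the same bookkeeping inside a Frostman-measure mean value, but the structural input is identical.
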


Combining Theorem~\ref{thm:F_f} with~\eqref{eq:low Gauss} and~\eqref{eq:low monom} and noticing 
that for $d \ge 2$,  
$$
\frac{d}{d+2} \le 1-\frac{1}{D},
$$
we obtain the following exact values in the case of monomial sums. 

\begin{cor}  \label{cor:Ed} 
For any $\alpha\in (1-1/D, 1)$,  where $D$ is given by~\eqref{eq:D},  we have 
$$
\dim \sF_{d, \alpha} =
\begin{cases}  
2(1-\alpha)  & \text{if}\  d=2,\\
 \(1+1/d\)(1-\alpha) & \text{if}\ d  \ge 3.
 \end{cases} 
$$
\end{cor}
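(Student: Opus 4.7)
The plan is that Corollary~\ref{cor:Ed} should drop out by specializing Theorem~\ref{thm:F_f} to the monomial $f(X)=X^d$ and pairing the resulting upper bound with the existing lower bounds in~\eqref{eq:low Gauss} and~\eqref{eq:low monom}. No new analytic input is needed; the entire task is bookkeeping to confirm that the admissible ranges of $\alpha$ are compatible.

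First I would observe that $\sF_{d,\alpha}$ is exactly $\cF_{f,\alpha}$ for the real polynomial $f(X)=X^d$ of degree $d$. Applying Theorem~\ref{thm:F_f} directly yields, for $\alpha\in(1-1/D,\,1)$, the upper bounds $\dim\sF_{2,\alpha}\le 2(1-\alpha)$ when $d=2$ and $\dim\sF_{d,\alpha}\le (1+1/d)(1-\alpha)$ when $d\ge 3$. For the reverse inequalities, the case $d=2$ is immediate, since~\eqref{eq:low Gauss} gives $\dim\sF_{2,\alpha}\ge 2(1-\alpha)$ throughout $(1/2,1)$; matching with the upper bound yields equality.

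For $d\ge 3$, the matching lower bound $\dim\sF_{d,\alpha}\ge(1+1/d)(1-\alpha)$ from~\eqref{eq:low monom} is valid only in the range $\alpha\in[d/(d+2),\,1)$, so I need to verify the inclusion $(1-1/D,\,1)\subseteq[d/(d+2),\,1)$. This reduces to the elementary inequality $d/(d+2)\le 1-1/D$, equivalently $D\ge(d+2)/2$. Since $D=\min\{2^{d-1},\,2d(d-1)\}$ by~\eqref{eq:D}, it is enough to check both $2^{d-1}\ge(d+2)/2$ and $2d(d-1)\ge(d+2)/2$ for $d\ge 3$; the first is $2^d\ge d+2$, trivially true by induction, and the second rearranges to $4d^2-5d-2\ge 0$, which holds for every $d\ge 2$. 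Thus $(1-1/D,\,1)$ is contained in the range where~\eqref{eq:low monom} applies, and combining the matching bounds delivers the claimed equality.

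There is essentially no obstacle here; the substance sits in Theorem~\ref{thm:F_f} and in the previously established lower bounds~\eqref{eq:low Gauss} and~\eqref{eq:low monom}, while the corollary itself only requires the trivial numerical verification above to align the intervals of $\alpha$.
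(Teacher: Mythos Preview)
Your proposal is correct and follows exactly the paper's approach: the corollary is derived by combining the upper bound of Theorem~\ref{thm:F_f} applied to $f(X)=X^d$ with the lower bounds~\eqref{eq:low Gauss} and~\eqref{eq:low monom}, together with the elementary check that $d/(d+2)\le 1-1/D$ so that the ranges of $\alpha$ match. The paper states this in one line just before the corollary; your verification of the numerical inequality is a bit more detailed but identical in substance.
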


As we have mentioned, we believe that in the case $f \in \Z[X]$  the lower bounds~\eqref{eq:low Gauss} and~\eqref{eq:low monom}
and thus Corollary~\ref{cor:Ed}, 
can be extended to $\dim \cF_{f, \alpha}$.

We observe that Theorem~\ref{thm:F_f} can be applied to estimate $\dim \cE_{d, \alpha}\cap \cL$ where 
$\cL$ is a straight line in$\R^d$ passing through the origin. Precisely, let $\vv = (v_1, \ldots, v_d) \in \R^d$, $\vv\neq {\mathbf 0}$,  and 
$$
\cL_{\vv}=\{\lambda\vv:~ \lambda \in \T\}. 
$$
For $\vx\in \cL_{\vv}$ for some   $\lambda \in \T$ we have 
$$
\sfS_{d}(\vx; N)=\sum_{n=1}^N\e\( \lambda f(n)\), 
$$ 
where $ f(n) = v_1n+\ldots + v_dn^d$. 
It follows that 
$
\dim \( \cE_{d, \alpha}\cap \cL_{\vv} \) \le \dim \cF_{f, \alpha},
$
and by Theorem~\ref{thm:F_f} we derive the following explicit bound.

\begin{cor}
\label{cor:intersection}
Let $\vv=(v_1, \ldots, v_k, 0, \ldots, 0)\in \R^d$ with $v_k\neq 0$ and $v_j=0$ when $k<j\le d$. Let 
$$
D_k=\min\{2^{k-1}, 2k(k-1)\}.
$$
Then for any $\alpha\in (1-1/D_k, 1)$ we have 
$$
\dim \cE_{d, \alpha}\cap \cL_v \le 
\begin{cases}  
2(1-\alpha)  & \text{if}\  k=2,\\
 \(1+1/k\)(1-\alpha) & \text{if}\ k  \ge 3.
 \end{cases} 
$$
\end{cor}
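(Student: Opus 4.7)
The plan is to reduce the statement directly to Theorem~\ref{thm:F_f} via the natural parametrization of the line $\cL_{\vv}$. First I would parametrize $\cL_{\vv}$ by the map $\phi\colon \T \to \R^d$, $\lambda \mapsto \lambda\vv$. Since $\vv\ne \0$, this map is a bi-Lipschitz embedding with constants depending only on $\|\vv\|$, hence for any set $A\subseteq \T$ one has $\dim \phi(A) = \dim A$.

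Next I would compute the Weyl sum at $\vx = \lambda \vv$. Because $v_j = 0$ for $j > k$, we get
\[
\sfS_d(\lambda \vv; N) = \sum_{n=1}^N \e\(\lambda v_1 n + \lambda v_2 n^2 + \ldots + \lambda v_k n^k\) = V_f(\lambda; N),
\]
where $f(X) = v_1 X + v_2 X^2 + \ldots + v_k X^k \in \R[X]$ is a polynomial of degree exactly $k$ (since $v_k \ne 0$), and $V_f$ is the sum defined in~\eqref{eq:sum V}. Therefore
\[
\phi^{-1}\(\cE_{d,\alpha} \cap \cL_{\vv}\) = \{\lambda \in \T : |V_f(\lambda; N)|\ge N^{\alpha}\ \text{for infinitely many}\ N\} = \cF_{f, \alpha}.
\]
By bi-Lipschitz invariance of Hausdorff dimension, this yields
\[
\dim \(\cE_{d,\alpha} \cap \cL_{\vv}\) = \dim \cF_{f, \alpha}.
\]

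Finally, since $\deg f = k$ and $\alpha \in (1 - 1/D_k, 1)$ by hypothesis, I would invoke Theorem~\ref{thm:F_f} applied to the polynomial $f$ of degree $k$ (with the role of $d$ in that theorem played by $k$, so that the relevant quantity is $D_k = \min\{2^{k-1}, 2k(k-1)\}$). This gives $\dim \cF_{f,\alpha} \le 2(1-\alpha)$ if $k = 2$ and $\dim \cF_{f,\alpha} \le (1 + 1/k)(1-\alpha)$ if $k \ge 3$, which is the desired bound.

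There is really no serious obstacle here: the entire content of the corollary is packaged into Theorem~\ref{thm:F_f}, and the reduction is essentially a notational substitution of $\lambda \vv$ for $\vx$ in the exponential sum. The only mild point to verify is that the parametrization of $\cL_{\vv}$ preserves the property of membership in the limsup set defining $\cE_{d,\alpha}$, but this is immediate from the identity $\sfS_d(\lambda \vv; N) = V_f(\lambda;N)$ combined with the fact that bi-Lipschitz maps preserve Hausdorff dimension.
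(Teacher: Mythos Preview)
Your proposal is correct and follows essentially the same approach as the paper: parametrize $\cL_{\vv}$ by $\lambda\mapsto\lambda\vv$, identify $\sfS_d(\lambda\vv;N)$ with $V_f(\lambda;N)$ for the degree-$k$ polynomial $f(X)=v_1X+\ldots+v_kX^k$, and then apply Theorem~\ref{thm:F_f}. The only minor difference is that you justify the dimension comparison via bi-Lipschitz invariance and obtain the equality $\dim(\cE_{d,\alpha}\cap\cL_{\vv})=\dim\cF_{f,\alpha}$, whereas the paper simply records the inequality $\dim(\cE_{d,\alpha}\cap\cL_{\vv})\le\dim\cF_{f,\alpha}$, which is all that is needed.
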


We remark that Corollary~\ref{cor:Ed} implies that the bounds of Corollary~\ref{cor:intersection} is sharp in general when $\alpha$ is close to $1$. This follows by choosing $\vv=(v_1, \ldots, v_d)$ such that $v_k=1$ and $v_j=0$ when $j\neq k$.
 Moreover, motivated from the research on Diophantine approximation on manifolds 
(see, for instance~\cite[Chapter~9]{Harm}),  we pose the following general question.
 
 \begin{question} \label{que}
 Given an ``interesting'' surface $\Gamma \subseteq \R^d$, for example, an algebraic hypersurface or 
a smooth analytic  surface of given curvature, determine the Hausdorff dimension of the 
intersection  $\cE_{d, \alpha}\cap \Gamma$. 
\end{question}

We note that upper bounds on the means values of Weyl sums along various surfaces 
have been given in~\cite{ChSh-Surf,DeLa}. 
 
\subsection{Comparison} 
We observe that Theorem~\ref{thm:Weyl} improves the upper bound
\begin{equation}
\label{eq:Old Bound}
 \dim \cE_{d, \alpha}\le \min_{k=0, \ldots, d-1} \frac{(2d^{2}+4d)(1-\alpha)+k(k+1)}{4-2\alpha+2k}
\end{equation}
of~\cite[Theorem~1.1]{ChSh-JNT} for the range $\alpha\in (1-1/D, 1)$.   Indeed,  consider the functions 
$$
F(h, \beta) =  \frac{(d^2+1)\beta}{h}+\frac{h-1}{2}, 
\qquad 
G(k,  \beta) = \frac{(2d^{2}+4d) \beta+k(k+1)}{2 +2\beta+2k}
$$
and note that it is enough to verify that for $\beta \in (0, 1/D)$ and $h=1, \ldots, d$ we have 
\begin{equation}
\label{eq:F<G}
F(h, \beta)  <G(h-1,  \beta) . 
\end{equation}
Clearly, the inequality~\eqref{eq:F<G} is equivalent to 
$$
 \frac{(d^2+1)\beta}{h}+\frac{h-1}{2} <  \frac{(2d^{2}+4d) \beta+h(h-1)}{2\beta+2h}
$$
or
$$
\(2(d^2+1)\beta+h(h-1)\)\(\beta+h\) < \((2d^{2}+4d) \beta+h(h-1)\)h.
$$
Simplifying we obtain an equivalent inequality 
$$
2(d^2+1)\beta^2 < \beta\(4dh-h(h+1)\)
$$
and finally 
$$
2(d^2+1)\beta < 4dh-h(h+1). 
$$
Since $\beta < 1/D$ and the right hand side of above inequality is increasing with $h \in [1,d]$
it is only enough to check that 
$$
2(d^2+1)/D < 4d-2, 
$$
which can be numerically verified for $3 \le d \le 7$ and established via elementary calculus for $d \ge 8$.
This shows that~\eqref{eq:F<G} holds and thus Theorem~\ref{thm:Weyl} is stronger than~\eqref{eq:Old Bound}
for any $d \ge 3$ and all admissible values of $\alpha$. 

Clearly,  when $\alpha$ is close to $1$ then the choice of $h=1$ in Theorem~\ref{thm:Weyl}  is optimal.
However sometimes other choices of $h$ give better result. For example, if 
$\alpha \in (1-1/D, 1-1/(d^2+1)$ than then choice of $h=2$ is better than $h=1$. 
The above range is non-empty provided  that $d^2 + 1 \ge D$, and this happens when  $3 \le d \le 6$ only.
Moreover, for larger $d$ (say $d\ge 7$) and $\alpha\in (1-1/D, 1)$  the value $\alpha$ is quite near the value $1$, thus we may take $h=1$ only in Theorem~\ref{thm:Weyl} when $d\ge 7$.
Although for our applications here the values $h \ge 3$ are never used, we   present the argument in full generality
as we believe  it can be used to study Weyl sums with other polynomials.

\subsection{Ideas behind the proofs}  
We concentrate  on the ideas in the proof of Theorem~\ref{thm:Gauss}.   
Before doing this, we remark that  a similar argument to the proof of the upper bound on $\dim \cE_{2, \alpha}$  also implies  an upper bound on $\dim \cE_{d, \alpha}$ and  $\dim \cF_{f, \alpha}$  in Theorems~\ref{thm:Weyl} and~\ref{thm:F_f}. However, the idea for obtaining the lower bound on $\dim \cE_{2, \alpha}$ does not work for $d\ge 3$. The main reason is that we do not have a version of Lemma~\ref{lem:Gauss} when $d\ge 3$ and in fact 
for a prime $p \equiv 2 \pmod 3$ there are many vectors $(a, b, c)\in [p]^3$ such that 
$$
\sum_{n=1}^p\e_p(an+bn^2+cn^3)=0,
$$
see~\cite[Remark~2.8]{ChSh-AM} for more details. 

\subsubsection*{Upper bound:} Our argument is based on a combination of the Frostman  Lemma
(see~\cite[Corollary~4.12]{Falconer}) and the G\'al--Koksma Theorem~\cite[Theorem~4]{GK}, 
which are presented in Section~\ref{sec:FL GKT}. 

 For any $t<\dim \cE_{2,\alpha}$, by the \textit{Frostman Lemma} (see~\cite[Corollary 4.12]{Falconer} 
 or Lemma~\ref{lem:Frostman} below),  there exists a Radon measure $\mu$ on $\cE_{2,\alpha}$ with 
$$
\mu(\cE_{2, \alpha})>0 \quad \text{ and } \quad  \mu(B(\vx, r))\ll r^{t}  
$$
for all $\vx$ and $r>0$. Applying the description of    Baker~\cite[Theorem~3]{Bak0}   of  the structure of large Gauss sums, we obtain the following type $L^{\rho}$ bound: for any $M, N\in \N$ we have  
$$
\int_{\T_2} \left|  \sum_{n=M+1}^{M+N}\e(x_1n+x_2n^{2}) \right|^{\rho} d\mu(\vx)\le N^{s_1+o(1)}(N+M)^{s_2}, 
$$
where the exponents $s_1$ and $s_2$ depend on $\rho$ and $t$. By a result 
of G\'{a}l and Koksma~\cite[Theorem~4]{GK}, we obtain that  for almost all $\vx=(x_1, x_2)$ with respect to $\mu$ 
$$
\sum_{n=1}^{N}\e(x_1n+x_2n^2)=o\(N^{(s_1+s_2)/\rho}\), \quad N\rightarrow \infty.
$$
Since $\mu(\cE_{2,\alpha})>0$,  there is  a set of $(x_1, x_2)$  of positive $\mu$-measure such that 
$$
\left|\sum_{n=1}^{N}\e(x_1n+x_2n^2)\right |\ge N^{\alpha}
$$
for infinitely many $N\in \N$. It follows that 
$$
\alpha\le (s_1+s_2)/\rho.
$$
By taking the concrete parameters we obtain   
$$
t<\min\{1/2+3(1-\alpha), 6(1-\alpha)\}.
$$
Note that this inequality holds for any  $t<\dim \cE_{2,\alpha}$. Thus we obtain 
$$
\dim \cE_{2,\alpha}\le \min\{1/2+3(1-\alpha), 6(1-\alpha)\},
$$
which yields the desired upper bound.

 Proofs of Theorems~\ref{thm:Weyl} and~\ref{thm:F_f} follow a similar idea, albeit in a more technically involved way.

\subsubsection*{Lower bound:} we make the heuristic argument of~\cite[Section~8]{CKMS} rigorous for the case $d=2$. In brief, Gauss sums are large at rational points and their small neighbourhood, and we know the size of them from Diophantine analysis. Indeed, first note that the Gauss sums are large at rational points, for instance for any $a, b, p$ where $p$ is a prime number, $(p, b)=1$ we have  
$$
\left|\sum_{n=1}^{p}\e_p(an+bn^2)\right|=\sqrt{p},
$$
where $\e_p(z) =  \exp(2\pi i z/p)$, 
and hence, by periodicity,  for suitably large $N$ we have 
$$
\left|\sum_{n=1}^{N}\e_p(an+bn^2)\right|\approx \frac{N}{\sqrt{p}}\approx N^{\alpha}.
$$ 
Here  $z\approx Z$ means that  $Z/C\le z\le CZ$ for some absolute positive constant $C$. By the continuity of the Gauss sums, we have 
$$
\left|\sum_{n=1}^{N}\e(xn+yn^2)\right|\approx N^{\alpha},
$$
provided that 
\begin{equation}
\label{eq:Diophantine}
\left|x-\frac{a}{p}\right|<p^{-\frac{1}{2(1-\alpha)}}, \quad \left|y-\frac{b}{p}\right|<p^{-\frac{1}{1-\alpha}}.
\end{equation}

It follows that if $(x, y)$ satisfies~\eqref{eq:Diophantine} for infinitely many $a, b, p$ then $(x, y)\in \cE_{2,\alpha}$, and we denote the collection of these $(x, y)$ by $W_\alpha$. Thus $W_\alpha\subseteq \cE_{2,\alpha}$. 
 By Rynne~\cite[Theorem~1]{Rynne}, for $\alpha\in (1/2, 1)$ we have 
$$
\dim W_\alpha=\min\left \{1/2+3(1-\alpha), 6(1-\alpha)\right \},
$$
and thus yields the desired lower bound.

\section{Results for  sets of exponential sums of arbitrary size}
 \label{sec:arb sums}
 
\subsection{One parametric families of Weyl sums with integer polynomials}

Theorem~\ref{thm:F_f}  says nothing for $\dim \sF_{f, \alpha}$ when  $\alpha \in (1/2, 1-1/D)$. 
Our next result fills that gap, and in particular  for any polynomial $f\in \Z[X]$ with degree $d \ge 2$ and any $\alpha\in (1/2, 1)$ we have 
\begin{equation}
\label{eq:<1}
\dim \cF_{f, \alpha}<1.
\end{equation}
In fact, Theorem~\ref{thm:separated} below implies that for any real polynomial $f\in \R[X]$ with degree $d\ge 2$  the nontrivial bound~\eqref{eq:<1} still holds.

\begin{theorem}
\label{thm:poly}
Let $f\in \Z[x]$ be a polynomial with degree $d\ge 2$.  For any real $\alpha \in (1/2,1)$, we have 
$$
 \dim \cF_{f, \alpha}\le \min\{U_1(d, \alpha), U_2(d, \alpha)\},
$$
 where 
\begin{align*}
&U_1(d, \alpha)=\min_{r=1, \ldots, d}\frac{d+1-\alpha+2^r(1-\alpha)-r}{d+1-\alpha},\\
&U_2(d, \alpha)=\min_{r=1, \ldots, d}\frac{d+1-\alpha+r(r+1)(1-\alpha) - r}{d+1-\alpha}.
\end{align*}
\end{theorem}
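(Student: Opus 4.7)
The argument follows the Frostman--G\'al--Koksma template deployed elsewhere in this paper, combined with a discretisation tuned to one-dimensional sums $V_f$. Fix $t<\dim\cF_{f,\alpha}$. By the Frostman Lemma (Lemma~\ref{lem:Frostman} below) there is a Radon probability measure $\mu$ supported on $\cF_{f,\alpha}$ with $\mu(I)\ll |I|^t$ for every interval $I\subseteq\T$. The goal is to bound
$$
J_\rho(M,N)=\int_{\T}\left|\sum_{n=M+1}^{M+N}\e\(xf(n)\)\right|^{\rho}d\mu(x)
$$
in the form $N^{s_1+o(1)}(M+N)^{s_2}$ acceptable to the G\'al--Koksma theorem; the latter then delivers, for $\mu$-a.e.\ $x$, an asymptotic $V_f(x;N)=o\bigl(N^{(s_1+s_2)/\rho+o(1)}\bigr)$. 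Since $\mu(\cF_{f,\alpha})>0$ and $|V_f(x;N)|\ge N^{\alpha}$ infinitely often on $\cF_{f,\alpha}$, this forces $\alpha\le(s_1+s_2)/\rho$, hence an upper bound on $t$.

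The key discretisation step, which dictates the denominator $d+1-\alpha$ in the final formulas, is the following. Partition $\T$ into $Q$ intervals of length $1/Q$ with $Q\asymp N^{1-\alpha}(M+N)^{d}$; the Lipschitz bound $|V_f'(x;M,M+N)|\ll N(M+N)^d$ shows that the oscillation of $V_f(x;M,M+N)$ on each interval is $O(N^{\alpha})$. Combining $\mu(I)\le Q^{-t}$ with a Riemann-sum comparison between $\sum_j |V_f(x_j;M,M+N)|^{\rho}$ and $Q\int_{\T}|V_f|^{\rho}\, dx$ leads to
$$
J_\rho(M,N)\ll Q^{1-t}\int_{\T}\left|\sum_{n=M+1}^{M+N}\e\(xf(n)\)\right|^{\rho}dx+Q^{1-t}N^{\alpha\rho}.
$$
Unpacking $Q^{1-t}=N^{(1-\alpha)(1-t)}(M+N)^{d(1-t)}$ separates the total exponent into an $s_1$-part on $N$ and the $s_2=d(1-t)$ part on $M+N$, which is the shape required by G\'al--Koksma.

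The Lebesgue integral on $\T$ is then bounded in two alternative ways, each parametrised by $r\in\{1,\ldots,d\}$. For $U_1$ one applies the Weyl differencing inequality $r$ times to obtain
$$
\int_{\T}\left|\sum_{n=M+1}^{M+N}\e\(xf(n)\)\right|^{2^{r}}dx\ll N^{2^{r}-r+o(1)},
$$
valid since $f\in\Z[X]$ has degree $d\ge r$. Substituting this into the discretisation, applying G\'al--Koksma, demanding $\alpha\le(s_1+s_2)/2^r$ and solving for $t$ yields $t\le(d+1-\alpha+2^{r}(1-\alpha)-r)/(d+1-\alpha)$; minimising over $r$ produces the $U_1$ branch. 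For $U_2$ one uses instead a Vinogradov-type mean value bound
$$
\int_{\T}\left|\sum_{n=M+1}^{M+N}\e\(xf(n)\)\right|^{r(r+1)}dx\ll N^{r^{2}+o(1)},
$$
obtained from the near-sharp Vinogradov mean value theorem of Bourgain--Demeter--Guth and Wooley applied to $\sfS_d(c_1 x,\ldots,c_d x;M,M+N)=V_f(x;M,M+N)$, where $c_1,\ldots,c_d$ are the coefficients of $f$. The same algebra then yields $t\le(d+1-\alpha+r(r+1)(1-\alpha)-r)/(d+1-\alpha)$. Taking the minimum of the two families over $r\in\{1,\ldots,d\}$ and letting $t\uparrow\dim\cF_{f,\alpha}$ completes the proof.

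The principal obstacle I anticipate is the Vinogradov input used for $U_2$: the one-dimensional integral $\int_{\T}|V_f|^{r(r+1)}\,dx$ counts solutions of the single polynomial equation $\sum f(n_i)=\sum f(m_i)$, whereas the Vinogradov mean value theorem, in its cleanest form, controls solutions of a \emph{system} of $r$ simultaneous power-sum equations. The reduction therefore requires either a bespoke projection/Cauchy--Schwarz argument along the line $\{(c_1 x,\ldots,c_d x):x\in\T\}\subseteq\T_d$ or a restriction-theoretic refinement of decoupling, and this step must be executed uniformly in the shift parameter $M$. A secondary technical point is the precise calibration $Q\asymp N^{1-\alpha}(M+N)^{d}$: the $N^{1-\alpha}$ factor is exactly what is needed to produce the denominator $d+1-\alpha$ rather than $d+1$ in the final formulas, and any other natural choice of $Q$ would cost some sharpness in the exponents $U_1(d,\alpha)$ and $U_2(d,\alpha)$.
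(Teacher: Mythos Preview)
Your Frostman--G\'al--Koksma route has a genuine gap, and in fact yields no nontrivial information as written. Trace the second term in your discretisation bound
\[
J_\rho(M,N)\ll Q^{1-t}\int_{\T}|V_f|^{\rho}\,dx+Q^{1-t}N^{\alpha\rho}
\]
through G\'al--Koksma with $Q=N^{1-\alpha}(M+N)^{d}$. Its contribution to the total exponent is $(1-\alpha)(1-t)+\alpha\rho+d(1-t)$, so the resulting pointwise bound is $N^{\alpha+(d+1-\alpha)(1-t)/\rho+o(1)}$, which exceeds $N^{\alpha}$ for every $t<1$. Hence whenever this term dominates you obtain no constraint on $t$. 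But it dominates precisely when $N^{\alpha\rho}$ exceeds the Hua mean value $N^{2^{r}-r}$, that is, when $\alpha>1-r/2^{r}$; and this is \emph{exactly} the regime where $U_1(d,\alpha)<1$ for that $r$, i.e.\ the only regime where there is anything to prove. The same obstruction applies to the $U_2$ branch with $\rho=r(r+1)$. Perturbing to $Q=N^{1-\alpha+\delta}(M+N)^{d}$ does not rescue the argument: you then get a disjunction of two upper bounds on $t$, one close to $1$ and one close to $U_1$, and can only conclude the maximum.

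The paper proceeds quite differently and avoids Frostman and G\'al--Koksma entirely for this theorem. The key device is the \emph{completion technique}: one introduces the auxiliary sum $W_f(x;N)$ (see~\eqref{eq:W_f}) satisfying $V_f(x;M)\ll W_f(x;N)$ for all $M\le N$, so that the shift parameter $M$ disappears. One then uses continuity of $W_f$ at scale $N^{\alpha-d-1-\varepsilon}$ (Lemma~\ref{lem:cont}) together with the mean value bound to count covering intervals directly (Lemmas~\ref{lem:cover} and~\ref{lem:general}), producing the Hausdorff bound $(\tau+1-\alpha+t-s\alpha)/(\tau+1-\alpha)$ with $\tau=d$. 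The denominator $d+1-\alpha$ arises here from the continuity scale of $W_f$, not from any discretisation parameter tied to $(M+N)^{d}$. For integer-valued $f$, Lemma~\ref{lem:S-W mean} transfers Lebesgue moments of $V_f$ to $W_f$ up to a logarithm, and then Hua's inequality (Lemma~\ref{lem:Hua}) and Wooley's one-variable bound $\int_{\T}|V_f|^{r(r+1)}dx\le N^{r^{2}+o(1)}$ (Lemma~\ref{lem:BW}, which is~\cite[Corollary~14.2]{Wool}) give the two families $U_1,U_2$. So your worry about reducing the Vinogradov system to a single equation is unnecessary: the required one-dimensional moment bound is already in the literature and is quoted as Lemma~\ref{lem:BW}.
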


Now we extract  some easier  upper bounds for $\dim \cF_{f, \alpha}$.  Taking $r=2$ and $r=d$ in the definition of  $U_1(d, \alpha)$, we obtain
$$
U_1(d, \alpha)\le \min \left \{\frac{d+3-5\alpha}{d+1-\alpha}, \frac{(2^d+1)(1-\alpha)}{d+1-\alpha} \right \}.
$$
Furthermore, taking $r=d$ in the definition of  $U_2(d, \alpha)$ we obtain
$$
U_2(d, \alpha)\le \frac{(d^2+d+1)(1-\alpha)}{d+1-\alpha}.
$$

We formulate the following corollary. 

\begin{cor}
\label{cor:main}
Let $f\in \Z[x]$ be a polynomial with degree $d\ge 2$.  For any real $\alpha \in (1/2,1)$, we have 
$$
\dim \cF_{f, \alpha}\le \min \left\{\frac{d+3-5\alpha}{d+1-\alpha}, \frac{(2^d+1)(1-\alpha)}{d+1-\alpha}, \frac{(d^2+d+1)(1-\alpha)}{d+1-\alpha} \right\}.
$$
\end{cor}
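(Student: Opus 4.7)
The plan is to derive the corollary as a direct consequence of Theorem~\ref{thm:poly} by simply specializing the minimum over $r$ in the definitions of $U_1(d,\alpha)$ and $U_2(d,\alpha)$ to convenient values. Since the corollary only claims an upper bound obtained as the minimum of three specific expressions, it suffices to show that each of those three expressions arises as a particular term in the minima defining $U_1$ or $U_2$, and then invoke the trivial inequality
$$
\min_{r=1,\ldots,d} X_r \le X_{r_0}
$$
for any fixed choice $r_0\in\{1,\ldots,d\}$.

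First I would substitute $r=2$ into the formula for $U_1(d,\alpha)$; a one-line simplification gives
$$
\frac{d+1-\alpha+4(1-\alpha)-2}{d+1-\alpha}=\frac{d+3-5\alpha}{d+1-\alpha},
$$
which yields the first term of the minimum in the corollary. Next I would take $r=d$ in $U_1(d,\alpha)$, obtaining
$$
\frac{d+1-\alpha+2^d(1-\alpha)-d}{d+1-\alpha}=\frac{(2^d+1)(1-\alpha)}{d+1-\alpha},
$$
which yields the second term. Finally, taking $r=d$ in $U_2(d,\alpha)$ produces
$$
\frac{d+1-\alpha+d(d+1)(1-\alpha)-d}{d+1-\alpha}=\frac{(d^2+d+1)(1-\alpha)}{d+1-\alpha},
$$
giving the third term.

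Combining these three bounds via Theorem~\ref{thm:poly}, which asserts
$\dim \cF_{f,\alpha}\le \min\{U_1(d,\alpha),U_2(d,\alpha)\}$, concludes the proof. There is no genuine obstacle here: the argument consists solely of routine arithmetic simplification, and the only care required is to observe that the specialized values $r=2$ and $r=d$ lie in the admissible range $r\in\{1,\ldots,d\}$ for all $d\ge 2$, which is clearly the case.
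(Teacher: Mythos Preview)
Your proposal is correct and follows exactly the same route as the paper: the corollary is obtained from Theorem~\ref{thm:poly} by specializing $r=2$ and $r=d$ in $U_1(d,\alpha)$ and $r=d$ in $U_2(d,\alpha)$, with the same one-line arithmetic simplifications you give.
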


Corollary~\ref{cor:main} implies that for any $f\in \Z[x]$ be a polynomial with degree $d\ge 2$ and any  $\alpha\in (1/2, 1)$ we have 
$$
 \dim \cF_{f, \alpha}<1.
 $$

Furthermore, for monomials we have yet abother  bound. 

\begin{theorem}
\label{thm:monom}
Let   $d\ge 2$.  For any real $\alpha \in (1/2,1)$, we have 
$$
 \dim \sF_{d, \alpha}\le   
  \frac{(1+s_0)(1-\alpha)}{d+1-\alpha},
$$
where 
$$
s_0 = d(d-1)  + \min_{r =1, \ldots, d }\frac{2d + (r-1)(r-2)}{r}. 
$$
\end{theorem}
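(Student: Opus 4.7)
The plan is to adapt the Frostman-measure plus G\'al--Koksma machinery used for Theorem~\ref{thm:Gauss} to the monomial setting. Fix $t<\dim\sF_{d,\alpha}$ and, by~\cite[Corollary~4.12]{Falconer}, produce a non-zero finite Radon measure $\mu$ supported on $\sF_{d,\alpha}$ satisfying $\mu(B(x,\rho))\le C\rho^{t}$ for all $x\in\T$ and $\rho>0$.

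The feature distinguishing this proof from the general polynomial case of Theorem~\ref{thm:poly} is that for a pure monomial sum the shift $n\mapsto n+M$ converts
$$\sum_{n=M+1}^{M+N}\e(xn^{d})=\e(xM^{d})\sum_{n=1}^{N}\e\Bigl(\sum_{k=1}^{d}\binom{d}{k}M^{d-k}xn^{k}\Bigr)$$
into a genuine Weyl sum in the $d$ coefficients $y_{k}=\binom{d}{k}M^{d-k}x$ moving along a curve in $\T_{d}$ parametrised by $x$, thereby opening the door to Vinogradov's mean value theorem. Raising to an even power $2s$, expanding and integrating against $\mu$ yields
$$\int_{\T}\Bigl|\sum_{n=M+1}^{M+N}\e(xn^{d})\Bigr|^{2s}d\mu(x)=\sum_{h\in\Z}R(h;N,M)\,\widehat{\mu}(-h),$$
where $R(h;N,M)$ counts tuples $(\mathbf{n},\mathbf{m})\in[1,N]^{2s}$ with $\sum_{k=1}^{d}\binom{d}{k}M^{d-k}\bigl(\sum_{i}n_{i}^{k}-\sum_{i}m_{i}^{k}\bigr)=h$. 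The core analytic task is to establish, for each $r\in\{1,\ldots,d\}$ and an appropriate $s$, an estimate
$$\int_{\T}\Bigl|\sum_{n=M+1}^{M+N}\e(xn^{d})\Bigr|^{2s}d\mu(x)\ll N^{A+o(1)}(N+M)^{B},$$
with $A,B$ depending on $s,d,t,r$ and balanced so that $(A+B)/(2s)$ attains the target exponent. The contribution $d(d-1)=2\binom{d}{2}$ in $s_{0}$ represents the saving gained by fully exploiting the lower-degree equations $\sum_{i}n_{i}^{k}=\sum_{i}m_{i}^{k}$ for $1\le k\le d-1$ through the Bourgain--Demeter--Guth~\cite{BDG} and Wooley~\cite{Wool-3} form of Vinogradov's mean value theorem, while the quantity $(2d+(r-1)(r-2))/r$ emerges from the trade-off between applying $r$ additional Cauchy--Schwarz steps to the top-degree equation and invoking the Frostman bound $\mu(B(x,\rho))\ll\rho^{t}$ at the corresponding dyadic scale in $M$.

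With the mean-value estimate in hand, the G\'al--Koksma theorem~\cite[Theorem~4]{GK} delivers $V_{d}(x;N)=o\bigl(N^{(A+B)/(2s)+\varepsilon}\bigr)$ for $\mu$-almost every $x$. Since $\mu(\sF_{d,\alpha})>0$, there must exist $x$ in the support of $\mu$ for which $|V_{d}(x;N)|\ge N^{\alpha}$ for infinitely many $N$, forcing $\alpha\le(A+B)/(2s)$. Solving for $t$ and minimising over $r\in\{1,\ldots,d\}$ produces the claimed bound $t\le(1+s_{0})(1-\alpha)/(d+1-\alpha)$, and letting $t\nearrow\dim\sF_{d,\alpha}$ concludes.

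The main obstacle will be calibrating the mean-value step: the Frostman condition on $\mu$ is a $1$-dimensional ball estimate, whereas the equation $\sum_{k=1}^{d}\binom{d}{k}M^{d-k}L_{k}=h$ couples $d$ different monomials through powers of $M$. The free parameter $r\in\{1,\ldots,d\}$ in $s_{0}$ records both which block of equations is reserved for Vinogradov and at which dyadic scale in $M$ the Frostman saving is extracted; getting these two choices to align is precisely what delivers the shape of $s_{0}$ in the theorem.
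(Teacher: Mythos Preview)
Your approach is \emph{not} the one the paper takes, and the proposal as it stands has a real gap.

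The paper proves Theorem~\ref{thm:monom} by the direct covering machinery of Section~10, \emph{not} by the Frostman--G\'al--Koksma route used for Theorems~\ref{thm:Gauss}--\ref{thm:F_f}. Concretely, one simply quotes Wooley's monomial mean value bound (Lemma~\ref{lem:Monom-W})
\[
\int_{\T}\Bigl|\sum_{n=1}^{N}\e(xn^{d})\Bigr|^{s_{0}}dx\le N^{s_{0}-d+o(1)},
\]
transfers it to the completed sums $W_{f}$ via Lemma~\ref{lem:S-W mean}, and then feeds this into the covering Lemma~\ref{lem:general} with $\tau=d$, $s=s_{0}$, $t=s_{0}-d$. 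The arithmetic
\[
\frac{d+1-\alpha+(s_{0}-d)-s_{0}\alpha}{d+1-\alpha}=\frac{(1+s_{0})(1-\alpha)}{d+1-\alpha}
\]
finishes the proof in one line. The number $s_{0}$ is not something you derive here; it is the \emph{input} exponent coming from~\cite[Corollary~14.7]{Wool}.

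Your sketch misidentifies the source of $s_{0}$. You describe $d(d-1)$ as the saving from the Vinogradov equations and $(2d+(r-1)(r-2))/r$ as arising from ``$r$ Cauchy--Schwarz steps on the top-degree equation together with the Frostman bound at dyadic scale in $M$''. That is not what happens: the shape of $s_{0}$ is entirely internal to Wooley's Lebesgue-measure argument and has nothing to do with any Frostman measure $\mu$ or with $M$. Moreover, your ``feature distinguishing'' the monomial case --- that the shift $n\mapsto n+M$ produces a genuine Weyl sum --- holds for \emph{any} polynomial; what actually distinguishes monomials is that Lemma~\ref{lem:Monom-W} is sharper than Lemmas~\ref{lem:Hua} and~\ref{lem:BW}.

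Finally, the ``core analytic task'' you announce --- a bound on $\int_{\T}|V_{d}(x;M,N)|^{2s}d\mu(x)$ via the Fourier expansion $\sum_{h}R(h;N,M)\widehat{\mu}(-h)$ --- is not carried out, and it is not clear it can be made to produce the stated exponent. The Frostman condition gives only average decay of $\widehat{\mu}$, and you provide no mechanism for controlling $R(h;N,M)$ well enough to recover exactly $(1+s_{0})(1-\alpha)/(d+1-\alpha)$. In short: cite Wooley's bound and apply Lemma~\ref{lem:general}; the Frostman--G\'al--Koksma apparatus is unnecessary here and your outline does not close.
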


Furthermore, for ``polynomial-like'' sequences, such  in the special case of  \textit{Piatetski-Shapiro}  sequences $f(n) = \fl{n^\tau}$ 
we have the following result.

\begin{theorem}
\label{thm:P-S}
Let  $f(n) = \fl{n^\tau}$   for some $\tau\ge 1$. 
For any $\alpha\in (1/2, 1)$,  we have 
$$
\dim \cF_{f, \alpha} \le \begin{cases}  
 1-\frac{4\alpha + \tau -4}{\tau+1-\alpha}  & \text{if}\  \tau < 2,\\  
 1-\frac{4\alpha-2}{\tau+1-\alpha} & \text{if}\ \tau  \ge 2. 
 \end{cases}  
$$
\end{theorem}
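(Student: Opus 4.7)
The plan is to apply the same Frostman--G\'al--Koksma framework that underlies the proofs of Theorem~\ref{thm:Gauss} and Theorem~\ref{thm:F_f}. Fix any $t < \dim \cF_{f, \alpha}$. By the Frostman Lemma there exists a Radon probability measure $\mu$ supported on $\cF_{f, \alpha}$ with $\mu(B(x, r)) \ll r^{t}$ for every $x \in \T$ and every $r > 0$. The task is to establish a mean-value bound of the form
$$
\int_{\T} \left| V_f(x; M+N) - V_f(x; M) \right|^{2} d\mu(x) \ll N^{s_1 + o(1)} (M+N)^{s_2},
$$
holding uniformly in $M$ and $N$, and then to invoke the G\'al--Koksma theorem to conclude that $V_f(x; N) = o(N^{(s_1 + s_2)/2})$ for $\mu$-almost every $x$. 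Since $\mu(\cF_{f, \alpha}) > 0$ and every point in the support of $\mu$ satisfies $|V_f(x; N)| \ge N^{\alpha}$ for infinitely many $N$, this forces $\alpha \le (s_1 + s_2)/2$; rearranging and letting $t$ approach $\dim \cF_{f, \alpha}$ delivers the claimed upper bound in each regime.

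To prove the moment bound, expand the square and integrate term by term:
$$
\int_{\T} \left| V_f(x; M+N) - V_f(x; M) \right|^{2} d\mu(x) = \sum_{m, n = M+1}^{M+N} \hat{\mu}\( \lfloor n^{\tau} \rfloor - \lfloor m^{\tau} \rfloor \).
$$
The diagonal $m = n$ contributes $N$. For the off-diagonal part, apply the Cauchy--Schwarz inequality together with the Fourier-analytic consequence of the Frostman condition, namely $\sum_{|k| \le K} |\hat{\mu}(k)|^{2} \ll K^{1 - t + o(1)}$, taking $K$ as the maximal size of $\lfloor n^{\tau} \rfloor - \lfloor m^{\tau} \rfloor$ over the relevant range, which is $\ll (M+N)^{\tau-1} N$. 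The other Cauchy--Schwarz factor is the additive energy of the set $\{\lfloor n^{\tau} \rfloor : M < n \le M+N\}$, obtained by counting integer quadruples $(m_1, n_1, m_2, n_2)$ satisfying $\lfloor n_1^{\tau} \rfloor - \lfloor m_1^{\tau} \rfloor = \lfloor n_2^{\tau} \rfloor - \lfloor m_2^{\tau} \rfloor$; the derivative estimate $f(n+1) - f(n) \asymp n^{\tau-1}$ turns this identity into the approximate polynomial equation $n_1^{\tau} + m_2^{\tau} = n_2^{\tau} + m_1^{\tau} + O(1)$, amenable to lattice-point counting.

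The split between $\tau \ge 2$ and $1 \le \tau < 2$ emerges from the additive-energy bound. When $\tau \ge 2$ the values $\lfloor n^{\tau} \rfloor$ are well separated, distinct pairs typically produce distinct differences, and one obtains the clean Vinogradov-type bound $N^{2 + o(1)}$ on the additive energy; after Cauchy--Schwarz and optimisation in $t$ this yields the second case of the theorem. For $1 \le \tau < 2$ the sequence is denser, extra collisions inflate the energy by a $\tau$-dependent factor, and this is what produces the weaker first case with the modified numerator $4\alpha + \tau - 4$ and the matching $\tau$ in the denominator.

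The principal obstacle is this additive-energy estimate for $\{\lfloor n^{\tau} \rfloor\}$ with explicit dependence on $\tau$ and uniformity in the starting point $M$: the floor destroys the polynomial identities available when $\tau$ is an integer, so one has to argue geometrically by counting lattice points in a thin slab around the algebraic surface $x_1^{\tau} + x_2^{\tau} = x_3^{\tau} + x_4^{\tau}$ inside $[M+1, M+N]^{4}$. This is exactly the type of analytic input supplied by van der Corput $k$-th derivative tests and by the methods behind the Piatetski--Shapiro prime number theorem, and it is precisely this step that transmits the sparsity of the underlying sequence into the final dimension bound.
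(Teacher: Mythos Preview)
The paper does not use the Frostman--G\'al--Koksma framework here. It applies the direct covering argument of Lemma~\ref{lem:general}: continuity (Lemma~\ref{lem:cont}) localises the large-sum set into short intervals, and a Lebesgue fourth-moment bound on the auxiliary sum $W_f(x;N)$ controls how many such intervals there are. Via the completion inequality~\eqref{eq:completion} and the transfer Lemma~\ref{lem:S-W mean}, this needs only $\int_{\T}|V_f(x;N)|^4\,dx$ for the \emph{initial} interval $[1,N]$ with respect to \emph{Lebesgue} measure, which Robert--Sargos (Lemma~\ref{lem:PS-AddEnergy}) supplies with exponent $\max\{2,\,4-\tau\}$. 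Substituting $s=4$ and this exponent into Lemma~\ref{lem:general} with growth rate $\tau$ gives exactly the two cases of the theorem.

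Your route instead requires a second $\mu$-moment over every shifted range $[M+1,M+N]$, which after Cauchy--Schwarz becomes an additive-energy bound for $\{\lfloor n^\tau\rfloor:\,M<n\le M+N\}$ that is \emph{uniform in $M$}. Robert--Sargos is stated for dyadic ranges and does not cover the regime $M\gg N$, where the sequence is locally close to an arithmetic progression and the $N^{2+o(1)}$ energy claim is far from automatic; you correctly flag this as the ``principal obstacle'' but do not resolve it. Note also that if your uniform energy bound did hold, the computation you sketch would actually yield $t\le 1-(4\alpha-2)/\tau$ for $\tau\ge 2$, not the theorem's $1-(4\alpha-2)/(\tau+1-\alpha)$, so the assertion that the outline ``yields the second case of the theorem'' does not check out either way. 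The paper's completion technique is precisely what sidesteps the uniform-in-$M$ difficulty.
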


Note that for any $\alpha\in (1/2, 1)$ Theorem~\ref{thm:P-S} provides nontrivial upper bound, that is,  $\dim \cF_{f, \alpha}<1$, in a wide range of parameters $\alpha$ and $\tau$, for instance, when 
$2 >  \tau > 4 -  4\alpha$. 

\subsection{One parametric families of exponential sums with arbitrary sequences} 
Let $f(n)$ be a real sequence.  We extend the definition of exponential sums $V_f(x; N)$ in~\eqref{eq:sum V}  and of 
the sets $\cF_{f, \alpha}$  to exponential sums with an  arbitrary real sequence  $f(n)$, $n=1,2, \ldots$. 

First we consider sequences  with a given rate of  their growth on average. Namely, we assume 
that there exists a real number $\pow>0$ such that for all large enough $N$ we have 
\begin{equation}
\label{eq:sigma}
\frac{1}{N} \sum_{n=1}^{N} |f(n)| \ll N^{\pow}.
\end{equation}

\begin{theorem} 
\label{thm:separated}
Let $f(n)$ be a real sequence such that $|f(n)-f(m)|\gg 1$ for all $n\neq m$ and the sequence  $f(n)$ satisfies~\eqref{eq:sigma} for some $\pow > 0$.
 For any real $\alpha \in (1/2,1)$, we have 
$$
\dim \cF_{f, \alpha}\le 1-  \frac{2\alpha-1}{\pow+1-\alpha}.
$$
\end{theorem}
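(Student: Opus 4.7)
Let $\beta_0 = 1 - (2\alpha-1)/(\pow+1-\alpha)$. Arguing by contradiction, suppose $\dim \cF_{f,\alpha} > \beta_0$; then by the Frostman Lemma~\cite[Corollary~4.12]{Falconer} there exist $\beta \in (\beta_0, \dim\cF_{f,\alpha})$ and a Radon probability measure $\mu$ supported on $\cF_{f,\alpha}$ with $\mu(B(x,r)) \ll r^\beta$ for all $x \in \T$ and $r > 0$. The plan is to obtain an $L^2(\mu)$ estimate for incomplete sums of $V_f$, apply the G\'al--Koksma Theorem to produce a pointwise upper bound on $|V_f(x;N)|$ for $\mu$-almost every $x$, and combine this with the defining property of $\cF_{f,\alpha}$ to derive a contradiction.

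For the $L^2(\mu)$ bound, fix $M, N \in \N$, set $W_f(x) = V_f(x;M+N) - V_f(x;M)$, and for $\lambda > 0$ consider the level sets $G_\lambda = \{x \in \T : |W_f(x)| \ge \lambda\}$. The mean value bound $\int_\T |W_f(x)|^2\, dx \ll N\log(M+N)$, which follows from the separation hypothesis $|f(n) - f(m)| \gg 1$ by a direct computation of the diagonal and off-diagonal contributions, yields $|G_\lambda| \ll N\log(M+N)/\lambda^2$. Combined with the Lipschitz bound on $W_f$ with constant $\ll (M+N)^{\pow+1}$ (coming from $\sum_{n \le M+N}|f(n)| \ll (M+N)^{\pow+1}$, which is~\eqref{eq:sigma} summed), $G_\lambda$ is covered by $\ll N(M+N)^{\pow+1}\lambda^{-3}\log(M+N)$ intervals of length $\lambda/(M+N)^{\pow+1}$; applying the Frostman bound to each interval gives
$$
\mu(G_\lambda) \ll N\lambda^{\beta-3}(M+N)^{(\pow+1)(1-\beta)+o(1)}.
$$
Integrating $\int_0^\infty 2\lambda\, \mu(G_\lambda)\, d\lambda$, using $\mu \le 1$ below the crossover $\lambda_0 = (N(M+N)^{(\pow+1)(1-\beta)})^{1/(3-\beta)}$ and the displayed estimate above it, then yields
$$
\int_\T |W_f(x)|^2\, d\mu(x) \ll N^{s_1}(M+N)^{s_2+o(1)}, \quad s_1 = \frac{2}{3-\beta},\ s_2 = \frac{2(\pow+1)(1-\beta)}{3-\beta}.
$$

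Since $\beta < 1$ gives $s_1 < 1$, this incomplete-sums bound is in a form to which the G\'al--Koksma Theorem~\cite[Theorem~4]{GK} applies (with majorant $\Phi(K) \ll K^{s_1+s_2}$), producing for $\mu$-almost every $x \in \T$ the pointwise estimate
$$
|V_f(x;N)| \ll N^{(s_1+s_2)/2 + o(1)} = N^{(1+(\pow+1)(1-\beta))/(3-\beta) + o(1)}.
$$
Because $\mu(\cF_{f,\alpha}) = 1 > 0$, some such $x$ also satisfies $|V_f(x;N)| \ge N^\alpha$ for infinitely many $N$, forcing $\alpha \le (1+(\pow+1)(1-\beta))/(3-\beta)$; clearing denominators rearranges this to $\beta(\pow+1-\alpha) \le \pow+2-3\alpha$, i.e.\ $\beta \le \beta_0$, contradicting $\beta > \beta_0$. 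The principal obstacle is the $L^2(\mu)$ estimate: balancing the trivial bound $\mu \le 1$ against the Frostman-covering bound at precisely the crossover $\lambda_0$ is what produces the denominator $\pow+1-\alpha$ in the final inequality, whereas a direct smoothed-measure argument would only yield the looser denominator $\pow+2-2\alpha$.
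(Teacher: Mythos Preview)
Your approach is genuinely different from the paper's. The paper never invokes the Frostman lemma or the G\'al--Koksma theorem for this result; instead it works with the auxiliary sums $W_f(x;N)$ of~\eqref{eq:W_f}, proves the Lebesgue $L^2$ bound $\int_\T W_f(x;N)^2\,dx \ll N(\log N)^3$ (Lemma~\ref{lem:mean-separated}), and then feeds this into a direct covering argument (Lemmas~\ref{lem:cover} and~\ref{lem:general}) to bound $\dim\cF_{f,\alpha}$ from above. Your route via a Frostman measure and an $L^2(\mu)$ estimate obtained from a level-set/Lipschitz covering is closer in spirit to how the paper handles the Section~\ref{sec:large sums} theorems; the covering you produce for $G_\lambda$ is in fact the same covering the paper uses, viewed through the Frostman lens.

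There is, however, a real gap at the G\'al--Koksma step. Your bound has the shape $\int_\T |\sum_{M+1}^{M+N}\e(xf(n))|^2\,d\mu \ll N^{s_1}(M+N)^{s_2}$ with $s_1=2/(3-\beta)<1$. The form of G\'al--Koksma recorded in the paper (Lemma~\ref{lem:Gal-Koksma}, and its corollary Lemma~\ref{lem:G-K for our setting}) requires $\Psi(N)/N^{1+\gamma}$ to be nondecreasing for some $\gamma>0$, i.e.\ effectively $s_1>1$; your sentence ``Since $\beta<1$ gives $s_1<1$'' flags exactly the obstruction rather than resolving it. Rewriting the bound as $(M+N)^{s_1+s_2}$ does not help: with $\Psi(N)=N^{s_1+s_2}$ and $\Phi(M,N)=((M+N)/N)^{s_1+s_2}$ the auxiliary condition~\eqref{eq:psi} forces $\psi(2^n)\gg 2^{n(s_1+s_2-\beta')}$ for every admissible $\beta'<\gamma<s_1+s_2-1$, so the conclusion degenerates. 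The cleanest repair is to bypass G\'al--Koksma entirely: your own covering of $G_{N^\alpha}$ (at $M=0$) already gives $\mu(\{|V_f(\cdot;N)|\ge N^\alpha\})\ll N^{-(\pow+1-\alpha)(\beta-\beta_0)+o(1)}$, and combining this with the completion inequality~\eqref{eq:completion} (so that one needs only dyadic $N$) and Borel--Cantelli yields the contradiction. Alternatively, once you have the covering of $G_{N^\alpha}$ by $\ll N^{\pow+2-3\alpha+o(1)}$ intervals of length $N^{\alpha-\pow-1}$, you can read off the Hausdorff dimension bound directly as in Lemma~\ref{lem:general}, without any measure $\mu$ at all.
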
 

Clearly for any $\alpha\in (1/2, 1)$ and $\pow >0$ we have 
$$
1-  \frac{2\alpha-1}{\pow+1-\alpha}<1.
$$

\begin{theorem}
\label{thm:integer-convex}
Let $f(n)\in \Z$ be a  strictly convex integer sequence that satisfies~\eqref{eq:sigma} for some $\pow>0$. 
Then we have  
$$
\dim \cF_{f, \alpha}\le \min\{U_1(\pow, \alpha), U_2(\pow, \alpha)\}, 
$$
where   
\begin{align*}
& U_1(\pow, \alpha)=\frac{\pow+45/13-5\alpha}{\pow-\alpha},\\
& U_2(\pow, \alpha)=\inf_{k\ge 3, \, k\in \N} \frac{\pow-\alpha+2k-1+2^{-k+1}-2k\alpha}{\pow+1-\alpha}.
\end{align*}
\end{theorem}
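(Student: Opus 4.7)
The plan is to deploy the same Frostman plus G\'al--Koksma framework used to establish the other upper bounds in the paper, applied now through two different moment estimates for $V_f(x;N)$ that correspond separately to $U_1$ and to $U_2$. Fix any $t<\dim\cF_{f,\alpha}$. By the Frostman Lemma, there exists a Radon probability measure $\mu$ supported on $\cF_{f,\alpha}$ with $\mu(\cF_{f,\alpha})>0$ and $\mu(B(x,r))\ll r^{t}$ for all $x\in\T$ and $r>0$. For suitable integer $\rho\ge 1$ the goal is to prove a shifted $L^{2\rho}$-bound
$$
\int_{\T}\Bigl|\sum_{n=M+1}^{M+N}\e(xf(n))\Bigr|^{2\rho}\,d\mu(x)\le N^{s_{1}+o(1)}(N+M)^{s_{2}+o(1)},
$$
with explicit exponents $s_{1}=s_{1}(t,\pow,\rho)$ and $s_{2}=s_{2}(t,\pow,\rho)$. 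The G\'al--Koksma Theorem then yields the pointwise estimate $V_f(x;N)=o\bigl(N^{(s_{1}+s_{2})/(2\rho)}\bigr)$ for $\mu$-almost every $x$. Since $\mu(\cF_{f,\alpha})>0$, some such $x$ must also satisfy $|V_f(x;N)|\ge N^{\alpha}$ for infinitely many $N$, forcing $\alpha\le(s_{1}+s_{2})/(2\rho)$; rearranging this constraint upper-bounds $t$, and letting $t\uparrow\dim\cF_{f,\alpha}$ finishes the argument.

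For $U_1$ I would choose $\rho=2$ and expand
$$
\int_{\T}|V_f(x;N)|^{4}\,d\mu(x)=\sum_{n_{1},n_{2},n_{3},n_{4}=1}^{N}\widehat{\mu}\bigl(f(n_{1})+f(n_{2})-f(n_{3})-f(n_{4})\bigr).
$$
The diagonal contribution is the additive energy of the image set $\{f(1),\ldots,f(N)\}$, which for a strictly convex integer sequence is bounded by $N^{32/13+o(1)}$ via the classical bound of Konyagin on the additive energy of convex sets. Combining this with the $L^{2}$ Fourier energy estimate $\int_{|\xi|\le R}|\widehat{\mu}(\xi)|^{2}\,d\xi\ll R^{1-t}$ for the Frostman measure $\mu$ controls the off-diagonal contributions, and dyadically decomposing the resulting sum over $|k|=|f(n_{1})+f(n_{2})-f(n_{3})-f(n_{4})|$ up to the a-priori size $\ll N^{\pow}$ produces, after optimisation, the exponent $45/13$ in the numerator of $U_{1}$.

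For $U_2$ I would instead iterate the Weyl--van der Corput A-process $k$ times, applying Cauchy--Schwarz at each step to replace $|V_f(x;N)|^{2^{k}}$ by a multisum whose phases are $\e\bigl(x\Delta_{h_{1}}\cdots\Delta_{h_{k}}f(n)\bigr)$, with $\Delta_{h}$ the forward difference operator. Strict convexity of $f$ together with integrality guarantees that the iterated differences $\Delta_{h_{1}}\cdots\Delta_{h_{k}}f(n)$ are nonzero and vary in a controlled way with the shift parameters and $n$, so that integration against $\mu$ gains genuine Fourier decay at each level. The factor $2^{-k+1}$ that appears in the numerator of $U_2$ tracks the $(2^{k})$-th root extracted at the end of the iteration, while the linear-in-$k$ terms $2k-1-2k\alpha$ encode the contributions of the diagonal shift $h_{i}=0$ and the Cauchy--Schwarz losses of each step. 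Optimising over $k\ge 3$ yields the claimed infimum.

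The main obstacle is the $U_{2}$ bookkeeping: one must carry the dependence on both $N$ and $M$ through $k$ levels of differencing, handle the boundary effects caused by the incomplete ranges of the shift parameters, and verify that the ``trivial'' upper bound $|f(n)|\ll N^{\pow}$ on the frequencies is enough to convert the energy-type decay of $\widehat{\mu}$ into the claimed exponents without losing additional powers of $N$. The strict convexity assumption is used precisely to preclude degeneracies in the iterated differences, and the calibration of this nondegeneracy against the Frostman decay of $\mu$ is the most delicate part of the argument.
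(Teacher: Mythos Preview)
Your proposal takes a genuinely different route from the paper, and it has real gaps.

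The paper does \emph{not} use the Frostman--G\'al--Koksma machinery for this theorem. Instead it combines the completion/continuity covering argument of Lemma~\ref{lem:general} with the known \emph{Lebesgue} moment bounds of Lemma~\ref{lem:convex} (Shkredov's $\int_{\T}|V_f(x;N)|^4\,dx\ll N^{32/13+o(1)}$ and the Iosevich--Konyagin--Rudnev--Ten bound $\int_{\T}|V_f(x;N)|^{2k}\,dx\ll N^{2k-2+2^{1-k}}$), transferred to $W_f$ via Lemma~\ref{lem:S-W mean}. Plugging $s=4$, $t=32/13$ and $s=2k$, $t=2k-2+2^{1-k}$ into Lemma~\ref{lem:general} yields $U_1$ and $U_2$ immediately. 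No Frostman measure enters, and there is nothing to iterate.

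Your plan, by contrast, requires moment bounds against an arbitrary Frostman measure $\mu$, and neither of your two proposed mechanisms delivers the stated exponents. For $U_1$, expanding $\int|V_f|^4\,d\mu$ and applying Cauchy--Schwarz in dyadic frequency blocks forces you to control $\sum_k r_4(k)^2$, an eighth-moment object, not the fourth-moment additive energy; the crude bound $\sum_k r_4(k)^2\le r_4(0)\cdot N^4$ leads, after the G\'al--Koksma step, to an inequality for $t$ that is \emph{not} $U_1(\pow,\alpha)$. For $U_2$, the Weyl--van der Corput A-process needs nondegeneracy of the $k$-fold forward differences $\Delta_{h_1}\cdots\Delta_{h_k}f$, but strict convexity of an integer sequence only controls second differences; for $k\ge 3$ the iterated differences can vanish or collide, so the claimed ``genuine Fourier decay at each level'' is not available. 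The exponent $2^{1-k}$ in Lemma~\ref{lem:convex} comes from sumset combinatorics for convex sets, not from van der Corput iteration, and there is no evident way to replicate it against a general $\mu$.

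In short, the paper's argument is a two-line substitution into a covering lemma using off-the-shelf Lebesgue mean values; your Frostman route would require genuinely new moment estimates against $\mu$ that your sketch does not supply.
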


Next we consider sequences  with a restriction on the  growth of individual terms rather than 
on average as in~\eqref{eq:sigma}.

\begin{theorem}
\label{thm:O(n^d)}
Let $f(n)$ be a sequence of strictly increasing  sequence of  natural numbers with $f(n)=O(n^{\tau})$ for some $\tau\ge 1$. 
For any $\alpha\in (1/2, 1)$,  we have 
$$
\dim \cF_{f, \alpha} \le 1-\frac{2\alpha-1}{\tau}.
$$
\end{theorem}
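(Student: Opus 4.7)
My plan is to follow the three-step paradigm established in Sections~2--3 (Frostman lemma, $L^2$ mean value estimate against the Frostman measure, and the Gál--Koksma theorem), but exploiting the integrality of $f(n)$, which allows us to work with the Fourier series of $\mu$ on $\T$ rather than the Fourier transform on $\R$ (as would be used in Theorem~\ref{thm:separated}).

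Assume for contradiction that $\dim \cF_{f,\alpha}>t$ for some $t>1-(2\alpha-1)/\tau$. By the Frostman lemma there is a nonzero Radon measure $\mu$ supported on $\cF_{f,\alpha}$ satisfying $\mu(B(x,r))\ll r^t$ for all $x\in\T$ and $r>0$. For integers $M\ge 0$ and $N\ge 1$, expanding the square and integrating against $\mu$ gives
\[
\int_\T |V_f(x;M+N)-V_f(x;M)|^2\,d\mu(x)
=\sum_{M<n,m\le M+N}\hat\mu(f(n)-f(m)),
\]
where $\hat\mu(k)=\int_\T e(-kx)\,d\mu(x)$ for $k\in\Z$. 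The integer differences $f(n)-f(m)$ with $M<n,m\le M+N$ are bounded by $C(M+N)^\tau$, and by strict monotonicity of $f$ each nonzero integer $k$ is represented by at most $N$ pairs. Combining this with the Fejér-kernel bound $\sum_{|k|\le K}|\hat\mu(k)|^2\ll K^{1-t}$ (valid for any Frostman $t$-measure on $\T$ by testing $F_K\ast\mu$ pointwise against the Frostman condition) through a dyadic decomposition of the frequency range $|k|\le C(M+N)^\tau$, I would obtain the mean value estimate
\[
\int_\T |V_f(x;M+N)-V_f(x;M)|^2\,d\mu(x)\ll N\cdot (M+N)^{\tau(1-t)+o(1)}.
\]

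With this estimate in hand, the Gál--Koksma theorem~\cite[Theorem~4]{GK} applied as in the proof of Theorem~\ref{thm:Gauss} with $\rho=2$, $s_1=1$, and $s_2=\tau(1-t)$ yields $|V_f(x;N)|\ll N^{(1+\tau(1-t))/2+o(1)}$ for $\mu$-almost every $x\in\T$. Since $\mu(\cF_{f,\alpha})>0$, this upper bound holds for some $x\in\cF_{f,\alpha}$; combining it with the defining property $|V_f(x;N)|\ge N^\alpha$ for infinitely many $N$ gives $\alpha\le (1+\tau(1-t))/2$, equivalently $t\le 1-(2\alpha-1)/\tau$, contradicting the choice of $t$. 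The principal obstacle will be securing the exponent $\tau(1-t)$ in the mean-value estimate: a naive Cauchy--Schwarz against $\sum_{|k|\le K}|\hat\mu(k)|^2\ll K^{1-t}$ alone produces only $(M+N)^{\tau(1-t/2)}$, and to recover the factor $(M+N)^{\tau t/2}$ one must exploit the arithmetic sparsity coming from the fact that the $N$ integer values $f(M+1),\dots,f(M+N)$ are packed into an interval of length $O((M+N)^\tau)$, which restricts how many pairs can cluster at any given frequency scale in the dyadic decomposition.
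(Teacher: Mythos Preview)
Your overall strategy is exactly the paper's: Frostman measure, the $L^2$ bound
\[
\int_{\T}|V_f(x;M,N)|^2\,d\mu(x)\ll N(M+N)^{\tau(1-t)},
\]
and then Lemma~\ref{lem:G-K for our setting}. The paper does not reprove the middle step but quotes it as Lemma~\ref{lem:B-18} from Baker~\cite[Equation~(18)]{Bak-1}; your steps~1 and~3 match verbatim (the formal requirement $s_1>1$ in Lemma~\ref{lem:G-K for our setting} is handled by writing $N\le N^{1+\delta}$ and letting $\delta\to0$).

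Your self-identified obstacle in step~2 is real: Cauchy--Schwarz against the $\ell^2$ bound $\sum_{|k|\le K}|\hat\mu(k)|^2\ll K^{1-t}$ loses, and no dyadic refinement in frequency rescues it, because $\ell^2$ information on $\hat\mu$ alone is too crude. The mechanism that works is pointwise Fej\'er majorization rather than an $\ell^2$ estimate. Write $P(x)=\sum_{n=M+1}^{M+N}\e(f(n)x)$ and set $L=f(M+N)-f(M+1)\ll(M+N)^\tau$. Since $\widehat{|P|^2}$ is supported in $[-L,L]$ and $2\hat F_{2L}(k)-1=\hat F_{L}(k)$ on that range, one has the pointwise inequality $|P|^2\le 2\,F_{2L}\ast|P|^2$, whence
\[
\int_\T|P|^2\,d\mu\le 2\int_\T|P(y)|^2\Bigl(\int_\T F_{2L}(x-y)\,d\mu(x)\Bigr)dy\ll L^{1-t}\int_\T|P(y)|^2\,dy=N\,L^{1-t}.
\]
Here the inner integral is $\ll L^{1-t}$ uniformly in $y$ by the Frostman condition (this is the step you called ``testing $F_K\ast\mu$ pointwise''), and the outer integral equals $N$ by Parseval together with the distinctness of the integers $f(M+1),\dots,f(M+N)$. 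The ``sparsity'' you point to enters precisely here, via $\|P\|_{L^2(\T)}^2=N$, not via multiplicity bounds on the representation function $r(k)$.
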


We note that that the upper bound of Theorem~\ref{thm:O(n^d)}  is nearly sharp when $\alpha\rightarrow 1$. In fact we consider the following set of exponential sums with an 
even more stringent condition. Namely, for $c > 0$, we define the set 
\begin{equation} 
\label{eq:sG}
\cG_{f, c}=\{x\in T:~|V_f(x; N)|\ge cN  \text{ for infinitely many $N\in \N$ }\}.
\end{equation}

Here we mention some related work on $\cG_{f, c}$.    Suppose further that $f(n)\in \N$ for all $n$ with $f(n)=O(n^{\tau})$  for some $\tau\ge 1$.  
First Salem~\cite{Salem} and then 
Erd{\H o}s and Taylor~\cite{ET}  have shown that the set of $x\in \T$ such that the sequence 
$$
x f(n) , \ n = 1, 2, \ldots, \ \text{is not uniformly distributed}\ \pmod 1
$$
has Hausdorff dimension at most $1-1/\tau$. The result has been extended to arbitrary real sequences  $f(n)=O(n^{\tau})$ by Baker~\cite{B}. Moreover, for each $p\ge 1$, Ruzsa~\cite{Ruzsa} exhibits an integer  sequence $f(n)=O(n^{\tau})$ to show that the above upper bound $1-1/\tau$ is attained. The above results are related to the set $\cG_{f, c}$ by using the \emph{Weyl criterion} (see~\cite[Section~1.2.1]{DrTi}) and \emph{the countable stability} of Hausdorff dimension (see~\cite[Section~2.2]{Falconer}). More precisely, we recall that the countable stability of Hausdorff dimension asserts that for any sequence of sets $\cF_i$ we have 
$$
\dim \bigcup_{i\in \N} \cF_i =\sup_{i\in \N}\dim \cF_i.
$$

It follows from the above result of Baker~\cite{B},  Erd{\H o}s and Taylor~\cite{ET}  and Salem~\cite{Salem}   that for a sequence $f(n)=O(n^{\tau})$ with $\tau\ge 1$ and any $c>0$ we have 
\begin{equation}
\label{eq: G UpperBound}
\dim \cG_{f, c}\le 1-1/\tau.
\end{equation}\
Moreover, the result of Ruzsa~\cite{Ruzsa}  implies that for any $\tau\ge 1$ and any $\varepsilon>0$ there exists a sequence $f(n)=O(n^{\tau})$ such that 
\begin{equation}
\label{eq:vare}
\dim \cG_{f, c}\ge 1-1/\tau-\varepsilon.
\end{equation}
By combining with other ideas, we could remove the $\varepsilon$ of~\eqref{eq:vare}, and obtain the following.

\begin{theorem}  
\label{thm:O(n^d) - lower}
For any $\tau\ge 1$ there exits a strictly increasing sequence of  natural numbers $f(n)$ with $f(n)=O(n^{\tau})$ such that for some $c > 0$,  we have 
$$
\dim\cG_{f, c} = 1-1/\tau.
$$
\end{theorem}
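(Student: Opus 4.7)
The upper bound $\dim\cG_{f,c}\le 1-1/\tau$ for any strictly increasing sequence $f(n)=O(n^{\tau})$ and every $c>0$ is already recorded in~\eqref{eq: G UpperBound}, so the task is to construct a single such $f$ together with some $c>0$ realising the matching lower bound $\dim\cG_{f,c}\ge 1-1/\tau$. The natural input is the family of Ruzsa-type constructions underlying~\eqref{eq:vare}: for each $k\in\N$, there are an integer sequence $g_k(n)\le Cn^{\tau}$ (with uniform $C$ after rescaling) and a constant $c_k>0$ such that $\dim\cG_{g_k,c_k}\ge 1-1/\tau-1/k$. Passing to a subsequence, we may and do assume $\inf_k c_k\ge c_0>0$; this uniform lower bound on $c_k$ should be visible from inspection of Ruzsa's construction, where the large exponential sums have magnitude at least a fixed fraction of the length at the relevant scales.

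To combine the $g_k$'s into a single sequence, pick a very rapidly growing block sequence $0=N_0<N_1<N_2<\cdots$, say with $N_k\ge 2^{k}N_{k-1}^{\tau}$, and, within block $(N_{k-1},N_k]$, set
$$
f(n)=g_k(n-N_{k-1})+S_k,
$$
where $S_k$ is the minimal shift keeping $f$ strictly increasing. Induction on $k$, together with the double-exponential growth of $N_k$, shows $S_k=O(N_{k-1}^{\tau})$, whence $f(n)=O(n^{\tau})$ overall. Fix $k$ and $x\in\cG_{g_k,c_k}$. Any $M$ satisfying $|V_{g_k}(x;M)|\ge c_k M$ together with $3N_{k-1}/c_0\le M\le N_k-N_{k-1}$ gives, via
$$
V_f(x;N_{k-1}+M)=V_f(x;N_{k-1})+\e(xS_k)V_{g_k}(x;M)
$$
and the triangle inequality, the bound $|V_f(x;N_{k-1}+M)|\ge c_k M-N_{k-1}\ge (c_0/2)(N_{k-1}+M)$. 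Provided infinitely many good $M$'s lie in this range for the given $x$, we obtain an embedding $\cG_{g_k,c_k}\hookrightarrow\cG_{f,c_0/2}$ that preserves Hausdorff dimension, and combining over $k$ yields $\dim\cG_{f,c_0/2}\ge\sup_k(1-1/\tau-1/k)=1-1/\tau$, matching the upper bound with $c=c_0/2$.

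The hard part is the step ``infinitely many good $M$'s within a single finite block'', since the block $(N_{k-1},N_k]$ has bounded length while we need infinitely many times $N$ for each $x\in\cG_{f,c}$. There are two natural remedies: either (i) sharpen the Ruzsa input to a positive-lower-density statement for the good times $M$ in $[1,L]$ as $L\to\infty$, so that block $k$ alone contains arbitrarily many admissible $M$ once $N_k-N_{k-1}$ is taken large, or (ii) organise the $g_k$'s and blocks so that a positive-dimension subset of $x$'s simultaneously lies in $\cG_{g_k,c_k}$ for infinitely many $k$, allowing each such $x$ to collect good $N$'s from different blocks. Route (i) is cleaner and should be extractable from Ruzsa's construction because his sequences arise from arithmetic structures producing a positive density of resonant times; establishing this density refinement is the principal technical obstacle, after which the argument above closes the $\varepsilon$-gap in~\eqref{eq:vare} and delivers $\dim\cG_{f,c}=1-1/\tau$.
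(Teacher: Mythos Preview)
Your proposal has a genuine gap, and the remedies you suggest do not close it. The block construction produces a single sequence $f$, but for a point $x\in\cG_{g_k,c_k}$ attached to \emph{one} index $k$, all the admissible times $N_{k-1}+M$ you generate lie in the fixed finite interval $(N_{k-1},N_k]$. No matter how large you make $N_k-N_{k-1}$, and no matter whether the good $M$'s for $g_k$ have positive density, you obtain only \emph{finitely} many $N$ with $|V_f(x;N)|\ge cN$, which is not enough to place $x$ in $\cG_{f,c}$. Remedy~(i) therefore does not help. Remedy~(ii) asks for a set of dimension $1-1/\tau$ of points lying in $\cG_{g_k,c_k}$ for infinitely many $k$; since the $g_k$ are constructed independently, there is no reason for such a common set to have large dimension, and arranging this is essentially as hard as the theorem itself. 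A further unjustified step is the assertion that one may pass to a subsequence with $\inf_k c_k\ge c_0>0$; if $c_k\to 0$ you cannot recover a positive infimum by thinning.

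The paper avoids the whole block-gluing apparatus. It uses a \emph{single} Ruzsa sequence $g(n)=O(n^{\tau})$: Ruzsa's construction in fact yields a set $\sG\subseteq\T$ with $\dim\sG=1-1/\tau$ (not merely $1-1/\tau-\varepsilon$) such that for every $x\in\sG$ there are infinitely many $N$ for which the fractional parts $\{xg(n)\}$, $n\le N$, concentrate in an interval of fixed length $\ell_0$ with density $\ge 2\ell_0$. The $\varepsilon$ in~\eqref{eq:vare} appears only because non-equidistribution does not immediately give a single large exponential sum. To bridge this, the paper majorises the indicator of that interval by a trigonometric polynomial of fixed degree $H$ (a Beurling--Selberg type majorant), obtaining for each such $N$ some harmonic $k\in\{1,\ldots,H\}$ with $\bigl|\sum_{n\le N}\e(kxg(n))\bigr|\ge cN$. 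Since $H$ is fixed, pigeonhole over the infinitely many $N$ gives, for each $x\in\sG$, a single $k=k(x)$ that works infinitely often; hence $\sG\subseteq\bigcup_{k=1}^{H}\cG_{f_k,c}$ with $f_k(n)=kg(n)$, and countable stability of Hausdorff dimension yields some $k$ with $\dim\cG_{f_k,c}\ge 1-1/\tau$. No gluing of different sequences is needed.
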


Let $f(n)=O(n^{\tau}), \tau\ge 1$ be a monotone increasing real sequence such that $f(n+1)-f(n)\gg1$. Baker, Coatney and Harman~\cite[Theorem 1]{BCH} show  that  
the set of $\vx\in \T_d$ such that the sequence 
$$
f(n) \vx, \ n = 1, 2, \ldots, \ \text{is not uniformly distributed}\ \pmod 1
$$
is of Hausdorff dimension at most $d-1/\tau$.  Moreover,~\cite[Theorem~2]{BCH} shows that the bound $d-1/\tau$ is sharp.  
By adapting the method of the proof of Theorem~\ref{thm:poly} (which is  same  as in the proofs of Theorems~\ref{thm:monom}, \ref{thm:P-S},  \ref{thm:separated} and~\ref{thm:integer-convex}), we obtain the following result where $f(n)$ is a sequence of matrices.

For a  $d\times d$  matrix $A$ we use  $\|A\|$ to denote its {\it opetator norm\/}, that is, 
$$
\|A\|=\sup \left \{\|\vx A\|:~\vx \in \R^d, \ \|\vx\|=1\right \}
$$
where, as before, $\|w\|$ denotes the Euclidean norm in $\R^{d}$.

\begin{theorem} 
\label{thm:matrix}
Let $\cS=(A_n)_{n\in \N}$ be a sequence of $d\times d$ integer matrices such that for some $\tau \ge 1/d$ and for all $N\in \N$ we have 
\begin{equation}
\label{eq:norm-A}
\frac{1}{N} \sum_{n=1}^{N} \|A_n\| \ll N^{\tau}.
\end{equation} 
Moreover for any $n\neq m$ the matrix $A_n-A_m$ is invertible. Let $\cE_{\cS}$ be the collection of point $\vx\in \T_d$ such that the sequence 
$$
\vx A_n, \ n = 1, 2, \ldots, \ \text{is not uniformly distributed}\ \pmod 1.$$  Then we have 
$$
\dim \cE_{\cS} \le d-1/\tau.
$$
\end{theorem}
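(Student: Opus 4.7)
The plan is to reduce, via the Weyl equidistribution criterion, to bounding the Hausdorff dimension of a set indexed by a single nonzero frequency $\vh\in\Z^d$, and then to apply a covering argument combining a gradient bound with the Rademacher--Menshov $L^2$ maximal inequality for orthonormal systems.

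First I would invoke the Weyl criterion: the sequence $\vx A_n \pmod 1$ fails to be uniformly distributed in $\T_d$ iff there exist $\vh \in \Z^d\setminus\{\0\}$ and $c>0$ such that $|V_\vh(\vx;N)|\ge cN$ for infinitely many $N$, where
$$
V_\vh(\vx;N) = \sum_{n=1}^N \e(\vh\cdot \vx A_n) = \sum_{n=1}^N \e(\vx\cdot \vv_\vh(n)),
\qquad \vv_\vh(n) = A_n\vh^T \in \Z^d.
$$
Since $A_n-A_m$ is invertible and $\vh\ne\0$, the integer vectors $\vv_\vh(n)$ are pairwise distinct, hence $\{\e(\vx\cdot \vv_\vh(n))\}_n$ is an orthonormal system in $L^2(\T_d)$. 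Setting $\cG_{\vh,c}=\{\vx\in \T_d : |V_\vh(\vx;N)|\ge cN \text{ infinitely often}\}$, we have $\cE_{\cS}=\bigcup_{\vh\ne\0}\bigcup_{k\ge 1}\cG_{\vh,1/k}$, and by the countable stability of Hausdorff dimension it is enough to prove $\dim \cG_{\vh,c}\le d-1/\tau$ for each admissible pair $(\vh,c)$.

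Two estimates drive the argument. Differentiating termwise and using~\eqref{eq:norm-A},
$$
\|\nabla V_\vh(\cdot\,;N)\|_\infty \ll \|\vh\|\sum_{n=1}^N \|A_n\| \ll \|\vh\|\,N^{\tau+1},
$$
so the block maximal function $\phi_j(\vx) = \max_{2^j\le N<2^{j+1}}|V_\vh(\vx;N)|$ is Lipschitz on $\T_d$ with constant $L_j\ll \|\vh\|\,2^{j(\tau+1)}$. From orthonormality and the Rademacher--Menshov inequality applied to the partial sums of $\{\e(\vx\cdot \vv_\vh(n))\}_n$,
$$
\int_{\T_d}\phi_j(\vx)^2\,d\vx \le \int_{\T_d}\max_{1\le N\le 2^{j+1}}|V_\vh(\vx;N)|^2\,d\vx \ll j^2\cdot 2^j.
$$
Set $H_j=\{\vx\in \T_d:\phi_j(\vx)\ge c\,2^j\}$. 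If $|V_\vh(\vx;N)|\ge cN$ for some $N\in[2^j,2^{j+1})$ then $\phi_j(\vx)\ge c\,2^j$, so $\cG_{\vh,c}\subseteq \limsup_j H_j$. Choosing $\delta_j=c\,2^j/(2L_j)\asymp_{\vh,c} 2^{-j\tau}$, the Lipschitz bound forces $B(\vx,\delta_j)\subseteq \{\phi_j\ge c\,2^j/2\}$ for every $\vx\in H_j$, a set of Lebesgue measure $\ll j^2/2^j$ by Markov. A standard maximal packing argument covers $H_j$ by $\ll j^2\cdot 2^{j(d\tau-1)}$ balls of radius $\asymp 2^{-j\tau}$, and for any $s>d-1/\tau$,
$$
\cH^s(H_j)\ll j^2 \cdot 2^{j((d-s)\tau -1)},
$$
which is summable in $j$. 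Hence $\cH^s(\limsup_j H_j)=0$, giving $\dim \cG_{\vh,c}\le s$; letting $s\downarrow d-1/\tau$ and taking the countable supremum over $(\vh,c)$ completes the proof.

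The principal obstacle is preserving the correct covering scale when the witness length $N$ is only known to lie in a dyadic block $[2^j,2^{j+1})$: crudely covering $\bigcup_{N\in[2^j,2^{j+1})}\{|V_\vh(\cdot\,;N)|\ge cN\}$ via a separate Markov argument for each $N$ loses a factor of $2^j$, producing $\asymp 2^{jd\tau}$ balls of radius $2^{-j\tau}$ and only the trivial bound $\dim \cE_{\cS}\le d$. The Rademacher--Menshov inequality collapses this ``sum over $N$'' into one $L^2$ bound on the block maximum $\phi_j$ at the benign cost of a $(\log N)^2$ factor absorbed into $j^2$, restoring the sharp count $2^{j(d\tau-1)}$.
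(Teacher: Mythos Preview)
Your proof is correct and takes a genuinely different route from the paper's. Both arguments begin identically: reduce via the Weyl criterion and countable stability of Hausdorff dimension to bounding $\dim\cG_{\vh,c}$; establish the orthogonality identity $\int_{\T_d}|V_\vh(\vx;N)|^2\,d\vx=N$ (since the $A_n\vh^T$ are distinct integer vectors); and use the gradient/Lipschitz bound $\|\nabla V_\vh(\cdot;N)\|_\infty\ll \|\vh\|N^{\tau+1}$ to control the covering scale. The divergence is in how the condition ``infinitely many $N$'' is discretised. You pass to dyadic blocks $[2^j,2^{j+1})$ and invoke the Rademacher--Menshov maximal inequality to bound $\int\phi_j^2\ll j^2\,2^j$, which directly yields $\ll j^2\,2^{j(d\tau-1)}$ balls of radius $\asymp 2^{-j\tau}$ covering $H_j$, and the series $\sum_j j^2\,2^{j((d-s)\tau-1)}$ converges for every $s>d-1/\tau$. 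The paper instead avoids any maximal inequality by taking a \emph{polynomial} grid $N_i=\lfloor i^\beta\rfloor$ with $\beta>1$: since $N_{i+1}-N_i=O(N_i^{1-1/\beta})=o(N_i)$, the trivial increment bound $|V_\vh(\vx;N)-V_\vh(\vx;N_i)|\le N-N_i$ already forces $\vx\in B_{N_i}=\{|V_\vh|\ge cN_i/2\}$ for infinitely many $i$, and the level-set count from the plain $L^2$ bound gives $\dim\cG_{\vh,c}\le d-1/\tau+1/(\beta\tau)$; letting $\beta\to\infty$ finishes. Your approach is cleaner in that it hits the sharp exponent without an auxiliary parameter and limit, at the cost of importing Rademacher--Menshov; the paper's argument is more elementary but needs the extra $\beta\to\infty$ step. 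Both exploit exactly the same two inputs (second moment and Lipschitz continuity), so the difference is purely in the discretisation device.
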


We remark that the reason of taking $\tau\ge 1/d$  is making the estimate~\eqref{eq:cardi}  meaningful. Moreover, we claim that the bound of Theorem~\ref{thm:matrix} is sharp in general, and this follows by 
using the aforementioned~\cite[Theorem~2]{BCH}. More precisely,~\cite[Theorem~2]{BCH} shows 
that for any $\tau\ge 1$, there exists an integer sequence $f(n)=O(n^\tau)$ such that the set of $\vx\in \T_d$ 
for which  
$$
\vx f(n), \ n = 1, 2, \ldots, \ \text{is not uniformly distributed}\ \pmod 1
$$ 
is of Hausdorff dimension $d-1/\tau$. For example,  for each $n\in \N$ let $A_n$ be a diagonal matrix with the same diagonal elements $f(n)$, then this sequence $A_n$ attains the above upper bound $d-1/\tau$ which implies the claim above.

\subsection{Comparison} 

Clearly Theorem~\ref{thm:O(n^d)} applies to polynomials $f\in \R[X]$ with $\tau = d$ and thus complements Theorem~\ref{thm:F_f}.  In particular, as we have mentioned  we see that for any real polynomial $f\in \R[X]$ with degree $d\ge 2$  the nontrivial bound~\eqref{eq:<1} still holds.

We remark that Theorem~\ref{thm:separated} still hold under a relaxed condition, that is $|f(n)-f(m)|\gg 1$ for all $n\neq m\ge N_0$ for any  constant $N_0$, 
and thus also   applies to polynomials $f\in \R[X]$ with $\pow = d$, however the corresponding upper bounds implied by 
Theorems~\ref{thm:O(n^d)} and~\ref{thm:separated} satisfy
$$
1-\frac{2\alpha-1}{d} <  1-  \frac{2\alpha-1}{d+1-\alpha}. 
$$

Finally the lower bound in Theorem~\ref{thm:O(n^d) - lower} is based on an idea of Ruzsa~\cite{Ruzsa}.

\subsection{Ideas behind the proofs}   Results of  Section~\ref{sec:arb sums}  are all based on various mean values theorems, continuity of exponential sums and on the completion technique
in the style used in~\cite{ChSh-IMRN, ChSh-JNT, ChSh-TwoPar}.   We roughly show that how their arguments imply the upper bounds of $\dim \cF_{f, \alpha}$. For obtaining the upper bound of  $\dim \cF_{f, \alpha}$, 
we find some intervals to cover the set $\cF_{f, \alpha}$. We collect these intervals by using the continuity of the sums 
$$
V_f(x;N)=\sum_{n=1}^N \e(xf(n)),
$$
that is if $|V_f(x; N)|\ge N^{\alpha}$ then $|V_f(y; N)|\ge N^{\alpha}/2$ when $y$ belongs to some small neighbourhood of  $x$. Moreover, the mean value bounds of $V_f(x; N)$ control the cardinality of above chosen intervals, which yields the desired upper bounds. Thus for obtaining better bounds, we have to know  how small neighbourhood of above $x$ explicitly, and we need various mean values theorems as well. 
 For technical reasons (completion technique), we in fact  use an auxiliary exponential sums $W_f(x; N)$,  which is given by~\eqref{eq:W_f}  to `control' the size of   $V_f(x; N)$.

More precisely, 
to establish Theorem~\ref{thm:poly}, \ref{thm:separated} and~\ref{thm:integer-convex}, we combine  Lemma~\ref{lem:general} 
with  mean values theorems collected in  Section~\ref{subsec:mean}. 
To prove  Theorem~\ref{thm:O(n^d)} we use the mean value bound from  Lemma~\ref{lem:B-18}, which comes from~\cite{Bak-1}, 
and, as for results in Section~\ref{sec:large sums}, on  the 
Frostman  Lemma and 
G\'al--Koksma Theorem,  see Lemmas~\ref{lem:Frostman} and~\ref{lem:G-K for our setting}, respectively.

\section{Frostman  Lemma and G\'al--Koksma Theorem}
\label{sec:FL GKT}
 
\subsection{Frostman  Lemma} 
For a real $s\ge 0$ and a set  $\cF\subseteq \R^{d}$ denote 
\begin{align*}
\cH^{s}_\delta(\cF)=  \inf\biggl\{  \sum_{i=1}^{\infty}\(\diam\cU_i\)^{s}:~ & \cF\subseteq \bigcup_{i=1}^{\infty} \cU_i, \\
&  \cU_i \subseteq \R^{d} \text{ and  }  \diam(\cU_i)\le \delta, i\in \N\biggr\},
\end{align*}
and the $s$-dimensional Hausdorff measure of the set $\cF$ is defined as 
$$
\cH^{s}(\cF)= \lim_{\delta\rightarrow 0} \cH^{s}_{\delta}(\cF).
$$
Moreover, alternatively the Hausdorff dimension of $\cF$ can also be  defined as 
$$
\dim \cF=\inf \{s\ge0:~\cH^{s}(\cF)=0\}=\sup\{s\ge 0:~\cH^{s}(\cF)=\infty\}.
$$

We   also need the following result,  which is known as  the \textit{Frostman  Lemma}, see, for 
instance,~\cite[Corollary~4.12]{Falconer}. 

\begin{lemma}
\label{lem:Frostman}
Let $\cF\subseteq \R^d$ be a Borel set with $0<\cH^{s}(\cF)\le \infty$. Then there exists a compact set $\cE\subseteq \cF$ such that $0<\cH^s(\cE)<\infty$ and 
$$
\cH^s(\cE \cap B(\vx, r))\ll r^{s}  
$$
for all $\vx$ and $r>0$.
\end{lemma}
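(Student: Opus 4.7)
The plan is to establish the statement in three steps: (i) reduce to a compact subset of $\cF$ of finite positive $\cH^s$ measure, (ii) use the density theorem for Hausdorff measures to show that almost every point of this subset has bounded upper $s$-density, and (iii) extract a compact sub-subset on which this density bound holds uniformly in the radius.

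First I would exploit the fact that $\cH^s$, restricted to a set of $\sigma$-finite $\cH^s$ measure, is a Borel regular measure. An exhaustion of $\cF$ by its intersections with large dyadic cubes combined with subadditivity produces a Borel subset of $\cF$ of finite positive $\cH^s$ measure, and then inner regularity of the restricted measure furnishes a compact subset $\cE_0 \subseteq \cF$ with $0 < \cH^s(\cE_0) < \infty$. Next I would appeal to the standard density theorem for Hausdorff measures from~\cite[Chapter~4]{Falconer} or~\cite[Chapter~6]{Mattila1995}: for $\cH^s$-almost every point $\vx \in \cE_0$ the upper $s$-density $\limsup_{r \to 0^+} \cH^s(\cE_0 \cap B(\vx,r)) / r^s$ is bounded above by a constant $c_s$ depending only on $s$ and the ambient dimension. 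Writing the set where this fails as a countable union over $k \in \N$ of the sub-level sets
$$
\cE_0^{(k)} = \bigl\{ \vx \in \cE_0 : \cH^s(\cE_0 \cap B(\vx,r)) \le k r^s \text{ for all } 0 < r < 1/k \bigr\},
$$
countable subadditivity forces some $\cE_0^{(k)}$ to have positive $\cH^s$ measure; inner regularity then produces a compact $\cE \subseteq \cE_0^{(k)}$ with $0 < \cH^s(\cE) < \infty$.

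Finally I would combine the small- and large-scale bounds: for $r < 1/k$ the defining inequality of $\cE_0^{(k)}$ gives $\cH^s(\cE \cap B(\vx,r)) \le k r^s$, while for $r \ge 1/k$ the trivial bound $\cH^s(\cE \cap B(\vx,r)) \le \cH^s(\cE_0) \le k^s \cH^s(\cE_0)\cdot r^s$ holds. Merging the two yields the required uniform estimate $\cH^s(\cE \cap B(\vx,r)) \ll r^s$ for all $\vx$ and $r>0$. The main obstacle is the density theorem invoked in the middle step, whose proof rests on a Vitali-type covering theorem that is somewhat delicate for the outer measure $\cH^s$; fortunately this is a classical input that can be quoted directly from the references above, after which the remaining work is a routine countable-union and inner-regularity argument.
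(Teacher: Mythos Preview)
The paper does not prove this lemma; it simply quotes it as~\cite[Corollary~4.12]{Falconer}. Your outline in fact follows the route Falconer takes there (density theorem for $\cH^s$, then extraction of a good compact subset), and your steps~(ii) and~(iii) are correct, including the passage from ``for all $\vx\in\cE$'' to ``for all $\vx$'' via the $2r$-ball trick.

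There is, however, a gap in step~(i). Intersecting $\cF$ with large dyadic cubes and invoking subadditivity yields a bounded Borel subset of \emph{positive} $\cH^s$-measure, but not one of \emph{finite} $\cH^s$-measure: for $s<d$ a bounded set can have $\cH^s$-measure equal to $\infty$, so nothing in your argument forces $\cH^s(\cF\cap Q)<\infty$. (Your opening sentence about $\cH^s$ restricted to a $\sigma$-finite set being Borel regular presupposes exactly what is at stake.) The existence of a compact $\cE_0\subseteq\cF$ with $0<\cH^s(\cE_0)<\infty$ when $\cH^s(\cF)=\infty$ is a genuine theorem of Besicovitch and Davies; see~\cite[Theorem~8.13]{Mattila1995}. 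You should invoke that result in place of the cube argument. With that correction, the rest of your proof goes through.
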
  

We remark that for our application of  Lemma~\ref{lem:Frostman} we  take $\mu$ to be the restriction of  the $s$-Hausdorff measure $\cH^s$ on $\cE$, that is for any $\cA\subseteq \R^d$, 
$$
\mu(\cA) = \cH^s(\cE\cap \cA).
$$
It follows that $\mu$ is a Radon measure such that $\mu(\cE)>0$ and 
$$
\mu(B(\vx, r))\ll r^{s}
$$
for all $\vx$ and $r>0$.

\subsection{G\'{a}l--Koksma Theorem}  

We first recall the following result of G\'{a}l and Koksma~\cite[Theorem~4]{GK}, which shows that  mean value bounds imply almost all bounds with respect to the same measure. Note that~\cite{GK} treats the case of the  Lebesgue measure on a set $\cS$ rather than an arbitrary Radon measure $\mu$, however the proof goes through without change. 

\begin{lemma}
\label{lem:Gal-Koksma}
Let $\mu$ be a Radon measure on $\T_d$ and 
let $f_1(\vx), f_2(\vx), \ldots $ be a sequence of Borel measurable function on $\R^d$. Suppose that we have the bound, for some $\rho>0$ and for all $M\ge 0, N\ge 1$,
$$
\int_{\R^d} \left|  \sum_{n=M+1}^{M+N}f_n(\vx) \right|^{\rho} d\mu(\vx)\le C \Psi(N)\Phi(M, N), \quad \Phi(M, N)\ge 1,
$$
where $C$ is an absolutely constant, $\Psi(N)$ and $\Phi(M, N)$ are some positive functions, and $\Psi(N)/N^{1+\gamma}$ is nondecreasing for some positive $\gamma$.  Let $\psi(N)>0$ be  nondecreasing and 
\begin{equation}
\label{eq:psi}
\psi(2^{n})\ge \Phi(0, 2^{n})+\sum_{\lambda=1}^{n}2^{(\lambda-n)(1+\beta)}\sum_{k=0}^{2^{n-\lambda}-1} \Phi(2^{n}+k2^{\lambda}, 2^{\lambda-1}),
\end{equation}
where $0<\beta<\gamma$ is a constant. Let $\chi(N)$ be a positive nondecreasing function with 
\begin{equation}
\label{eq:convergent}
\sum_{n=1}^{\infty}(N\chi(N))^{-1}<\infty.
\end{equation}
Then for almost all $\vx$ with respect to $\mu$, we have 
$$
\sum_{n=1}^{N}f_n(\vx)=o(\Psi(N)\psi(N)\chi(N))^{1/p}, \quad N\rightarrow \infty.
$$
\end{lemma}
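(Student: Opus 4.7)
Plan: The argument is the standard Rademacher--Menshov style dyadic proof due to G\'al and Koksma, with Lebesgue measure replaced throughout by the Radon measure $\mu$. Write $S_N(\vx)=\sum_{m=1}^{N} f_m(\vx)$. For each $n\ge 1$ and each $N\in[2^n,2^{n+1})$, expand $N-2^n=\sum_{j=0}^{n-1}\epsilon_j 2^j$ in binary; this expresses $S_N-S_{2^n}$ as a telescoping sum of at most $n$ dyadic block sums
\[
T_{\lambda,k}(\vx)=\sum_{m=2^n+k2^\lambda+1}^{2^n+(k+1)2^\lambda} f_m(\vx),\qquad 0\le k< 2^{n-\lambda},\quad 0\le\lambda<n,
\]
at prescribed positions determined by the binary digits of $N-2^n$. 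Hence the passage from Lebesgue to $\mu$ affects only the integration step, which will use nothing beyond Chebyshev, H\"older and Fubini.

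The hypothesis applied to each block gives $\int |T_{\lambda,k}|^\rho\,d\mu\le C\,\Psi(2^\lambda)\,\Phi(2^n+k2^\lambda,2^\lambda)$. Since
\[
|S_N-S_{2^n}|\le\sum_{\lambda=0}^{n-1}\max_{k} |T_{\lambda,k}|,
\]
a weighted H\"older inequality with weights $2^{(\lambda-n)(1+\beta)/\rho}$, combined with the monotonicity of $\Psi(N)/N^{1+\gamma}$ (which forces $\Psi(2^\lambda)\ll 2^{(\lambda-n)(1+\gamma)}\Psi(2^n)$), should yield
\[
\int \max_{N\in[2^n,2^{n+1})}|S_N-S_{2^n}|^\rho\,d\mu\ll \Psi(2^n)\sum_{\lambda=1}^{n}2^{(\lambda-n)(1+\beta)}\sum_{k=0}^{2^{n-\lambda}-1}\Phi\bigl(2^n+k2^\lambda,2^{\lambda-1}\bigr),
\]
the geometric series in $\lambda-n$ converging because $\beta\in(0,\gamma)$. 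Adding the analogous bound for $S_{2^n}$ itself, which involves $\Phi(0,2^n)$, the right-hand side is $\ll \Psi(2^n)\psi(2^n)$ by the very definition~\eqref{eq:psi} of $\psi$.

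Now set
\[
E_n=\Bigl\{\vx:\ \max_{N\in[2^n,2^{n+1})}|S_N(\vx)|^\rho> \Psi(2^n)\psi(2^n)\chi(2^n)\Bigr\}.
\]
A further application of Chebyshev yields $\mu(E_n)\ll 1/\chi(2^n)$, and since $\chi$ is nondecreasing the integral test applied to~\eqref{eq:convergent} gives $\sum_n 1/\chi(2^n)<\infty$. The Borel--Cantelli lemma (valid for any finite measure) then implies that, for $\mu$-almost every $\vx$, only finitely many $E_n$ occur; together with the monotonicity of $\Psi\,\psi\,\chi$ this is precisely the stated pointwise $o$-bound on $S_N$.

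The main technical obstacle will be the second step: matching the weighted H\"older estimate to the exact combinatorial shape of~\eqref{eq:psi}, so as to absorb the factor $n$ from the $n$ dyadic levels by means of the $2^{(\lambda-n)(1+\beta)}$ weights with a fixed $\beta\in(0,\gamma)$. Once this is in place, every other ingredient (Chebyshev, H\"older, Fubini, Borel--Cantelli) transfers to an arbitrary finite Radon measure without modification, which is why the original proof in~\cite{GK} for Lebesgue measure goes through verbatim.
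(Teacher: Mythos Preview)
Your proposal is correct and aligns with the paper's treatment: the paper does not give a proof of this lemma at all, but simply cites~\cite[Theorem~4]{GK} and remarks that ``the proof goes through without change'' when Lebesgue measure is replaced by a Radon measure $\mu$. Your sketch of the G\'al--Koksma dyadic argument (binary decomposition, weighted H\"older to absorb the $n$ levels via the $2^{(\lambda-n)(1+\beta)}$ weights, Chebyshev, Borel--Cantelli) is the standard one and indeed uses nothing measure-specific, so it justifies exactly the remark the paper makes.
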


For convenience of our application, we formulate the following particular case of  Lemma~\ref{lem:Gal-Koksma}. 

\begin{lemma}
\label{lem:G-K for our setting}Let $\mu$ be a Radon measure on $\T_d$ and let $f_1(\vx), f_2(\vx), \ldots $ be a sequence of Borel measurable function on $\R^d$. Suppose that we have the bound, for some $\rho>0$ and for all $M\ge 0, N\ge 1$,
$$
\int_{\R^d} \left|  \sum_{n=M+1}^{M+N}f_n(\vx) \right|^{\rho} d\mu(\vx)\le N^{s_1+o(1)}(M+N)^{s_2}
$$
for some   constants $s_1> 1$ and $s_2> 0$. Then for  almost all $\vx\in \R^d$ with respect to $\mu$ we have 
$$
\left | \sum_{n=1}^{N}f_n(\vx) \right |\le N^{(s_1+s_2)/\rho+o(1)}, \qquad N\rightarrow \infty.
$$
\end{lemma}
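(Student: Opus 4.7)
The plan is to deduce Lemma~\ref{lem:G-K for our setting} as a direct specialization of Lemma~\ref{lem:Gal-Koksma}. First I would fix an arbitrary small $\delta>0$ and use the hypothesis to write, for all $N$ sufficiently large and all $M\ge 0$,
$$
\int_{\R^d}\Bigl|\sum_{n=M+1}^{M+N}f_n(\vx)\Bigr|^{\rho}\,d\mu(\vx)\le \Psi(N)\,\Phi(M,N),
$$
where I set $\Psi(N)=N^{s_1+\delta}$ and $\Phi(M,N)=(M+N)^{s_2}$. Since $s_1>1$, I can choose $\gamma=(s_1-1)/2>0$ and any $\beta\in(0,\gamma)$, and then $\Psi(N)/N^{1+\gamma}=N^{s_1+\delta-1-\gamma}$ is nondecreasing, matching the hypothesis of Lemma~\ref{lem:Gal-Koksma}.

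The main technical step is to exhibit an admissible $\psi$. I will show that $\psi(N)=CN^{s_2}$ works for a suitable absolute constant $C$. Indeed, using $k<2^{n-\lambda}$ so that $2^{n}+k2^{\lambda}+2^{\lambda-1}\le 2^{n+1}$, each term in~\eqref{eq:psi} satisfies $\Phi(2^n+k2^\lambda,2^{\lambda-1})\le 2^{(n+1)s_2}$, and the inner sum contributes a factor $2^{n-\lambda}$. Substituting into~\eqref{eq:psi} and using the algebraic identity $(\lambda-n)(1+\beta)+(n-\lambda)=\beta(\lambda-n)$ gives
$$
\Phi(0,2^n)+\sum_{\lambda=1}^{n}2^{(\lambda-n)(1+\beta)}\cdot 2^{n-\lambda}\cdot 2^{(n+1)s_2}\;\le\; 2^{ns_2}\Bigl(1+2^{s_2}\sum_{j=0}^{n-1}2^{-\beta j}\Bigr)\ll 2^{ns_2},
$$
so taking $\psi(N)\asymp N^{s_2}$ satisfies~\eqref{eq:psi}. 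For $\chi$ any function like $\chi(N)=(\log N)^{2}$ fulfils the convergence condition~\eqref{eq:convergent}.

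Finally I would feed these choices into Lemma~\ref{lem:Gal-Koksma} to conclude that, for $\mu$-almost all $\vx$,
$$
\Bigl|\sum_{n=1}^{N}f_n(\vx)\Bigr| = o\bigl((\Psi(N)\psi(N)\chi(N))^{1/\rho}\bigr)=o\bigl(N^{(s_1+s_2+\delta)/\rho}(\log N)^{2/\rho}\bigr).
$$
Since $\delta>0$ was arbitrary, a standard diagonal argument (taking a countable sequence $\delta_k\to 0$ and intersecting the corresponding full-measure sets) yields $\bigl|\sum_{n=1}^{N}f_n(\vx)\bigr|\le N^{(s_1+s_2)/\rho+o(1)}$ for $\mu$-almost all $\vx$, which is exactly the claimed bound. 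The only mildly subtle point is this diagonal passage from a fixed $\delta$ to genuine $o(1)$ in the exponent; everything else is a bookkeeping calculation with Lemma~\ref{lem:Gal-Koksma}.
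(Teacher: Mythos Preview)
Your proposal is correct and follows essentially the same route as the paper's proof: both apply Lemma~\ref{lem:Gal-Koksma} with $\Phi(M,N)=(M+N)^{s_2}$, verify that $\psi(N)\asymp N^{s_2}$ satisfies~\eqref{eq:psi} via the same geometric-series calculation, and then pass to the limit in an auxiliary small parameter. The only cosmetic difference is that the paper places the extra $\varepsilon$ in $\chi(N)=N^{\varepsilon}$, whereas you absorb it as $\delta$ into $\Psi(N)=N^{s_1+\delta}$ and take $\chi(N)=(\log N)^2$; both choices yield the same conclusion after the countable-intersection step.
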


\begin{proof}
We show that  $\psi(N)=CN^{s_2}$ satisfies~\eqref{eq:psi} when $C$ is some large constant. To see this, 
we note that for $1\le \lambda\le n$,   we have
$$
\sum_{k=0}^{2^{n-\lambda}-1}(2^{n}+k2^{\lambda}+2^{\lambda-1})^{s_2}\ll 2^{n-\lambda}2^{ns_2},
$$
and for some small $\beta>0$,
$$
\sum_{\lambda=1}^{n}2^{(\lambda-n)(1+\beta)}2^{n-\lambda}2^{ns_2} =
\sum_{\lambda=1}^{n}2^{-(n-\lambda)\beta}2^{ns_2} \ll 2^{ns_2}.
$$
Moreover, for any $\varepsilon>0$ the function $\chi(N)=N^{\varepsilon}$ satisfies~\eqref{eq:convergent}.  By Lemma~\ref{lem:Gal-Koksma} we obtain that for almost all $\vx\in \R^d$ with respect to $\mu$ we have 
$$
\left | \sum_{n=1}^{N}f_n(\vx) \right |\le N^{(s_1+s_2+\varepsilon)/\rho+o(1)}, \quad N\rightarrow \infty.
$$
By the arbitrary choice of $\varepsilon>0$ we obtain the desired bound.
\end{proof}

\section{Bounds of Gauss sums and Weyl sums}

\subsection{Structure of large Gauss sums} 
\label{sec:struct} 

The following result of Baker~\cite[Theorem~3]{Bak0} and~\cite[Theorem~4]{Bak1} describes the structure 
of large Gauss sums.  

For each $m\in \N$ recalling that  $[m]=\{0, 1, \ldots, m-1\}$.  
 
\begin{lemma} 
\label{lem:structure of large Gauss} We fix some  $\varepsilon > 0$, and  suppose  that for a real   
$$
A> N^{1/2+ \varepsilon},
$$ 
we have $|G(x_1, x_2;N)|\ge A$ for some $(x_1,x_2)\in \R^2$. 
Then there exist integers $q, a_1, a_2$ such that 
$$
1 \le q \le  \(NA^{-1}\)^2 N^{o(1)},
$$
and for $i=1, 2$ we have 
$$
\left| x_i - \frac{a_i}{q} \right | \le  (NA^{-1})^2 q^{-1}  N^{-i + o(1)}.
$$
\end{lemma}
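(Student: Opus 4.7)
\medskip

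\noindent\textbf{Proof proposal.} The plan is to reduce the bivariate Gauss sum to univariate linear exponential sums by Weyl differencing, extract the approximation to the leading coefficient $x_2$ via Dirichlet's theorem, and then recover the approximation to $x_1$ by restricting the sum to arithmetic progressions modulo $q$ and using the evaluation of classical quadratic Gauss sums.

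First, I would square and interchange the order of summation in the standard Weyl way. Writing $n = m + h$, one obtains
\begin{equation*}
|G(x_1,x_2;N)|^2 = \sum_{|h|<N} \e\(x_1 h + x_2 h^2\) \sum_{m \in I(h)} \e\(2 x_2 h \, m\),
\end{equation*}
where $I(h)$ is an interval of length $\le N$. The inner sum is a geometric progression, bounded by $\min\{N, 1/\|2 x_2 h\|\}$. Hence
\begin{equation*}
A^2 \le |G(x_1,x_2;N)|^2 \le \sum_{|h|<N} \min\{N, \|2 x_2 h\|^{-1}\}.
\end{equation*}

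Second, I would apply Dirichlet's approximation theorem to pick $q_0$ with $1 \le q_0 \le Q := \lfloor A^2 N^{-\varepsilon/2}\rfloor$ and $|2x_2 - b/q_0| \le 1/(q_0 Q)$, then use the classical estimate
\begin{equation*}
\sum_{|h|<N} \min\{N, \|2 x_2 h\|^{-1}\} \ll \(\frac{N^2}{q_0} + N + q_0\) N^{o(1)}.
\end{equation*}
Comparing with the lower bound $A^2$ and using $A > N^{1/2+\varepsilon}$ forces $q_0 \ll (N/A)^2 N^{o(1)}$ and simultaneously pins down $2x_2$ within $q_0^{-1}(N/A)^2 N^{-2+o(1)}$ of $b/q_0$. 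Setting $q$ to be $q_0$ or $q_0/2$ (to absorb the factor $2$) and reducing the fraction yields the required $a_2,q$ with the stated bound on $|x_2-a_2/q|$.

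Third, having located $x_2$ near $a_2/q$, I would split $n$ into residue classes $n \equiv r \pmod q$ with $1 \le r \le q$, so that
\begin{equation*}
G(x_1,x_2;N) = \sum_{r=1}^{q} \e\(\frac{a_2 r^2}{q}\) \sum_{m} \e\(y_1(qm+r) + y_2(qm+r)^2\),
\end{equation*}
where $y_2 = x_2 - a_2/q$ is tiny on the scale $N^{-2}$, so that $y_2(qm+r)^2$ contributes a negligible phase after partial summation. The inner sum in $m$ therefore behaves like a linear exponential sum in $qy_1 + 2 y_2 r$ over an interval of length roughly $N/q$, again a geometric progression. The presence of the quadratic Gauss-sum factor $\sum_r \e(a_2 r^2/q + x_1 r)$ forces a rational approximation $qx_1 \approx a_1' \pmod 1$, and summing over $r$ against the geometric series (which is localised in $y_1$ at scale $q/N$) produces the bound $|x_1 - a_1/q| \ll (N/A)^2 q^{-1} N^{-1+o(1)}$ with the same common denominator $q$.

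The main obstacle is the last step: one must ensure that the approximation for $x_1$ shares the denominator $q$ already produced for $x_2$ rather than needing to pass to a larger common denominator, and must simultaneously bound the size of the Gauss sum factor $\sum_r \e(a_2 r^2/q + x_1 r)$ from below by $\sqrt q$-times-something by reducing to $\gcd(a_2,q)=1$ and exploiting the precise evaluation of quadratic Gauss sums. Keeping track of the $N^{o(1)}$ losses cleanly through both applications of Dirichlet and through the completion step is routine but delicate, and this is essentially the content of the Baker arguments \cite{Bak0,Bak1} being cited.
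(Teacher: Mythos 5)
You should first note that the paper offers no proof of this lemma: it is quoted directly from Baker~\cite[Theorem~3]{Bak0} and~\cite[Theorem~4]{Bak1}, so the only issue is whether your sketch amounts to a self-contained proof of Baker's theorem. Your first two steps are essentially sound: Weyl differencing plus Dirichlet's theorem and the standard estimate $\sum_{|h|<N}\min\{N,\|2x_2h\|^{-1}\}\ll (N^2/q_0+N+q_0)N^{o(1)}$ do produce a denominator $q\ll (NA^{-1})^2N^{o(1)}$ and the stated approximation to $x_2$, up to two small repairs: with $Q=\lfloor A^2N^{-\varepsilon/2}\rfloor$ the approximation quality you get is $N^{\varepsilon/2}q^{-1}A^{-2}$, which for fixed $\varepsilon$ is not $q^{-1}A^{-2}N^{o(1)}$ (take instead, say, $Q=\lfloor A^2(\log N)^{-3}\rfloor$), and absorbing the factor $2$ gives denominator $q_0$ or $2q_0$, not $q_0/2$ (harmless).

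The genuine gap is in the third step, which is where the whole content of the lemma lies. The perturbation $y_2n^2$, $y_2=x_2-a_2/q$, is not a negligible phase: its total variation over $n\le N$ is of order $|y_2|N^2\ll q^{-1}(N/A)^2N^{o(1)}$, which is a positive power of $N$ when $q$ is bounded and $A$ is close to $N^{1/2+\varepsilon}$ (and this regime really occurs: for $q=1$, $x_2=y_2\approx A^{-2}$, $x_1\approx -2y_2t_0$, the sum is large by stationary phase, and the inner sums over progressions are stationary-phase sums, not geometric progressions localised at scale $q/N$). If one nevertheless applies partial summation in each residue class $n\equiv r\pmod q$ and adds the $q$ classes trivially, one only obtains $A\ll q(1+|y_2|N^2)\min\{N/q,\ \|qx_1\|^{-1}\}$, hence $\|qx_1\|\ll N^2A^{-3+o(1)}$, which misses the required $\|qx_1\|\ll NA^{-2+o(1)}$ by a factor $N/A$. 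Recovering that factor requires square-root cancellation in the complete sum $\sum_{r\bmod q}\e\left((a_2r^2+a_1r)/q\right)$, i.e.\ an asymptotic factorisation of $G(x_1,x_2;N)$ into (complete Gauss sum)$/q$ times an oscillatory integral or geometric factor with an error that is $o(A)$ \emph{before} one knows that $\|qx_1\|$ or $|y_2|N^2$ is small; this delicate completion/major-arc step is exactly what you defer to ``the content of the Baker arguments,'' and without it the same-denominator approximation to $x_1$ with the stated quality is not established. (Also, your remark about bounding the Gauss-sum factor ``from below by $\sqrt q$'' has the logic reversed: what is needed is the upper bound $\left|\sum_{r\bmod q}\e\left((a_2r^2+a_1r)/q\right)\right|\le\sqrt{2q}$ combined with the lower bound $|G|\ge A$.) As it stands, the proposal proves the approximation for $x_2$ but not the simultaneous statement for $x_1$, so it does not constitute a proof of the lemma.
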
 

We now use Lemma~\ref{lem:structure of large Gauss} to descrbite the structure of large sums $G(x_1, x_2; M, N)$.

\begin{lemma}  
\label{lem:structure of large Gauss-MN} We fix some  $\varepsilon > 0$, and  suppose  that for a real   
$$
A> N^{1/2+ \varepsilon},
$$ 
we have $|G(x_1, x_2;M, N)|\ge A$ for some $(x_1, x_2)\in \T_2$. 
Then there exist integers $q, b_1, b_2$ such that 
$$
1 \le q \le  \(NA^{-1}\)^2 N^{o(1)}, \quad 0\le b_1, b_2 \le q,
$$
and
\begin{align*}
& \left| x_1- \frac{b_1}{q} \right | \le (M+N) q^{-1}A^{-2} N^{o(1)},\\
& \left| x_2- \frac{b_2}{q} \right | \le  q^{-1}A^{-2} N^{o(1)}.
\end{align*}
\end{lemma}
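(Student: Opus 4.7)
The plan is to reduce the shifted sum $G(x_1,x_2;M,N)=\sum_{n=M+1}^{M+N}\e(x_1 n+x_2 n^2)$ to the standard sum addressed by Lemma \ref{lem:structure of large Gauss} by means of a linear change of summation variable. Writing $n=M+m$ with $m=1,\dots,N$ and expanding
\[
x_1(M+m)+x_2(M+m)^2=(x_1 M+x_2 M^2)+(x_1+2x_2 M)m+x_2 m^2,
\]
the leading phase factors out and has modulus one, so
\[
|G(x_1,x_2;M,N)|=|G(y_1,y_2;N)|,\qquad y_1=x_1+2Mx_2,\ y_2=x_2.
\]
Hence $|G(y_1,y_2;N)|\ge A$.

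Next I apply Lemma \ref{lem:structure of large Gauss} to $(y_1,y_2)\in\R^2$, obtaining integers $q,a_1,a_2$ with $1\le q\le (NA^{-1})^2 N^{o(1)}$ and
\[
|y_1-a_1/q|\le N q^{-1}A^{-2}N^{o(1)},\qquad |y_2-a_2/q|\le q^{-1}A^{-2}N^{o(1)}.
\]
Since $y_2=x_2$, the second inequality already has the right shape. To recover a rational approximation of $x_1=y_1-2Mx_2$, I use the triangle inequality
\[
\left|x_1-\frac{a_1-2Ma_2}{q}\right|\le |y_1-a_1/q|+2M|x_2-a_2/q|\le (N+2M)q^{-1}A^{-2}N^{o(1)},
\]
and absorb the constant $2$ into $N^{o(1)}$ to get the required bound in terms of $M+N$.

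The final step is to replace the integers $a_2$ and $a_1-2Ma_2$ (which may be large or negative) by representatives $b_2,b_1\in[0,q]$ modulo $q$. Since $x_1,x_2\in[0,1]$ and both approximation errors above are $o(1)$ under the hypothesis $A>N^{1/2+\varepsilon}$, there is always a unique choice of $b_i\in[0,q]$ with $b_i\equiv a_i^\ast\pmod q$ (where $a_1^\ast=a_1-2Ma_2$, $a_2^\ast=a_2$) for which $|x_i-b_i/q|$ equals the already-bounded distance modulo $1$; no loss in the estimates is incurred. This bookkeeping of residues is the only technical point, and it is standard; the substance of the argument is the affine shift and a single invocation of Lemma \ref{lem:structure of large Gauss}.
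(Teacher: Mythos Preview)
Your approach—reducing to the unshifted sum via $n\mapsto M+m$ and then invoking Lemma~\ref{lem:structure of large Gauss}—is exactly the paper's strategy, and your triangle-inequality derivation of the bound for $|x_1-(a_1-2Ma_2)/q|$ is correct. The gap is in the final paragraph: your assertion that ``both approximation errors above are $o(1)$'' is false for the $x_1$ error, since $(M+N)q^{-1}A^{-2}N^{o(1)}$ can be arbitrarily large when $M$ is large relative to $N$. Consequently, taking the residue of $a_1-2Ma_2$ modulo $q$ does \emph{not} automatically produce a $b_1\in[0,q]$ satisfying the required inequality; for the residue representative, $|x_1-b_1/q|$ is simply the absolute difference of two numbers in $[0,1]$ and need not coincide with any ``distance modulo $1$''.

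The paper resolves this with a short case split. If $(M+N)A^{-2}N^{o(1)}\le 1$, the error $(M+N)q^{-1}A^{-2}N^{o(1)}$ is at most $1/q$, which together with $x_1\in[0,1]$ forces $a_1-2Ma_2\in[0,q]$ already, so one takes $b_1=a_1-2Ma_2$. If instead $(M+N)A^{-2}N^{o(1)}>1$, then the claimed bound exceeds $1/q$, and since $x_1\in[0,1]$ there is trivially \emph{some} $b_1\in[0,q]$ with $|x_1-b_1/q|\le 1/q$, which suffices; no congruence with $a_1-2Ma_2$ is needed here. For $b_2$ only the first case is relevant, since the $x_2$ error genuinely is $o(1)$.
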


\begin{proof}
Elementary arithmetic shows that 
$$
|G(x_1, x_2; M, N)|=|G(x_1+2Mx_2, x_2; N)|.
$$
By Lemma~\ref{lem:structure of large Gauss} there exist $q, a_1, a_2$ such that 
\begin{equation}
\label{eq:x1x2}
\begin{split}
& \left|x_1+2Mx_2-\frac{a_1}{q} \right | \le q^{-1}A^{-2}N^{1+o(1)},\\
& \left| x_2- \frac{a_2}{q} \right | \le  q^{-1}A^{-2} N^{o(1)}.
\end{split}
\end{equation}

Clearly $A^{-2}N^{o(1)}<1$, we conclude that $0\le a_2\le q$, and we take $b_2=a_2$.

We now search for an integer $b_1$ with the desired property.  From~\eqref{eq:x1x2} we obtain 
\begin{equation}
\label{eq:x_1}
\left| x_1- \frac{a_1-2Ma_2}{q} \right | \le (M+N) q^{-1}A^{-2} N^{o(1)}.
\end{equation}

Suppose that $(M+N)A^{-2} N^{o(1)}\le 1$, then by~\eqref{eq:x_1} we conclude that 
$$
0\le a_1-2Ma_2\le q,
$$
and we take $b_1=a_1-2Ma_2$.  
Suppose to the contrary that 
$$
(M+N)A^{-2} N^{o(1)}> 1,
$$
then trivially there exists $b_1$ with $0 \le b_1\le q$ and such that 
$$
 \left| x_1- \frac{b_1}{q} \right | \le q^{-1}\le 
 (M+N)q^{-1}A^{-2} N^{o(1)},
$$
which finishes the proof. 
\end{proof}

\begin{remark}
For large enough $M$  the bound 
$$
\left| x_1- \frac{b_1}{q} \right | \le (M+N) q^{-1}A^{-2} N^{o(1)}
$$
of Lemma~\ref{lem:structure of large Gauss} would be trivial. Thus we may add $1/q$ term, that is 
$$
\left| x_1- \frac{b_1}{q} \right | \le \min\{ (M+N) q^{-1}A^{-2} N^{o(1)}, 1/q\}.
$$
However, the bound $(M+N) q^{-1}A^{-2} N^{o(1)}$ is sufficient for our applications, and hence we use this bound only. 
\end{remark}

\subsection{Frequency of large Gauss sums}

Let $\mu$ be a Radon measure on $\T_2$ such that 
\begin{equation}
\label{eq:s-Holder}
\mu(B(\vx, r))\ll r^{t}
\end{equation}
holds for some $t>0$ and for all $\vx\in \T_2$ and $r>0$. Let $\fR$ be a rectangle with side length $a<b$, then we have 
$$
\mu(\fR)\ll \min\{ba^{t-1}, b^t\}.
$$
Indeed, we can either include  $\fR$ in a ball of radius $O(b)$ or cover it by $O(b/a)$ balls of radius $a$.

\begin{lemma}
\label{lem:frequency}
Let $\mu$ be a Radon measure satisfying~\eqref{eq:s-Holder} and  $A$ be a real number with $1\le A\le N$. For fixed $M, N\in \N$ denote
$$
\cE_B =\{\vx\in \T_2:~|G(\vx; M, N)|\ge B\}.
$$
Then we have 
$$
\mu(\cE_B)\le N^{6-2t+o(1)}B^{-6} \min \{ M+N, (M+N)^{t}\}.
$$
\end{lemma}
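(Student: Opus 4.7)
The plan is to cover $\cE_B$ by a family of rectangles coming from Lemma~\ref{lem:structure of large Gauss-MN}, bound the $\mu$-measure of each such rectangle via the observation immediately preceding the statement of the lemma, and sum over the covering.

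First I note that if $B \le N^{1/2+\varepsilon}$, the claimed bound follows from the trivial estimate $\mu(\cE_B)\le \mu(\T_2)\ll 1$ (the right-hand side is easily $\gg 1$ in the interesting regime), so I may assume $B > N^{1/2+\varepsilon}$ for some small fixed $\varepsilon>0$ and invoke Lemma~\ref{lem:structure of large Gauss-MN}. It then produces, for every $\vx\in \cE_B$, a rectangle $\fR_{q,b_1,b_2}$ centered at $(b_1/q,b_2/q)$ with horizontal side of length $b=(M+N)q^{-1}B^{-2}N^{o(1)}$ and vertical side of length $a=q^{-1}B^{-2}N^{o(1)}$, where $1\le q\le Q:=(N/B)^{2}N^{o(1)}$ and $0\le b_1,b_2\le q$.

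Next I would apply the rectangular measure bound $\mu(\fR)\ll \min\{ba^{t-1},b^{t}\}$ noted in the paragraph preceding the lemma, using crucially that $b=(M+N)a$. When $t\ge 1$ the first alternative is the smaller one and yields $\mu(\fR)\ll (M+N)\,a^{t}=(M+N)q^{-t}B^{-2t}N^{o(1)}$; when $t<1$ the second alternative is the smaller one and yields $\mu(\fR)\ll b^{t}=(M+N)^{t}q^{-t}B^{-2t}N^{o(1)}$. These two cases combine into the uniform bound
\[
\mu(\fR_{q,b_1,b_2})\ll \min\{M+N,(M+N)^{t}\}\cdot q^{-t}B^{-2t}N^{o(1)}.
\]

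Finally, summing over the at most $(q+1)^{2}\ll q^{2}$ admissible pairs $(b_1,b_2)$ for each fixed $q$, and then over $1\le q\le Q$, I obtain
\[
\mu(\cE_B)\ll \min\{M+N,(M+N)^{t}\}\,B^{-2t}N^{o(1)}\sum_{q=1}^{Q}q^{2-t},
\]
and since $t\le \dim\T_2=2<3$ the $q$-sum is dominated (up to an $N^{o(1)}$ factor) by its top term $Q^{3-t}$. Substituting $Q=(N/B)^{2}N^{o(1)}$ gives $B^{-2t}Q^{3-t}=N^{6-2t}B^{-6}N^{o(1)}$, completing the bound. The main technical point is the case split on whether $t<1$ or $t\ge 1$ collapsing cleanly into the unified factor $\min\{M+N,(M+N)^{t}\}$; once that is recognised, the remainder of the argument is just bookkeeping together with the geometric summation.
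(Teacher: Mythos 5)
Your main argument is essentially the paper's own proof: you cover $\cE_B$ via Lemma~\ref{lem:structure of large Gauss-MN} by rectangles with side lengths $(M+N)q^{-1}B^{-2}N^{o(1)}$ and $q^{-1}B^{-2}N^{o(1)}$, bound the $\mu$-measure of each rectangle by $\min\{M+N,(M+N)^{t}\}\,q^{-t}B^{-2t}N^{o(1)}$ using the rectangle estimate stated before the lemma, and then sum over the $O(q^{2})$ numerators and over $q\le Q=(NB^{-1})^{2}N^{o(1)}$. Your direct evaluation $\sum_{q\le Q}q^{2-t}\ll Q^{3-t}$ replaces the paper's dyadic split over $q$, which is an immaterial difference, and the final substitution $B^{-2t}Q^{3-t}=N^{6-2t}B^{-6}N^{o(1)}$ is exactly the paper's computation.

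The one point to flag is your opening reduction. For $B\le N^{1/2+\varepsilon}$ the right-hand side is not ``easily $\gg 1$'' for every admissible $t$: taking $t$ close to $2$, $M=O(N)$ and $B$ near $N^{1/2+\varepsilon}$, it is of size about $N^{4-2t-6\varepsilon}$, which tends to $0$ once $t>2-3\varepsilon$, so the trivial bound $\mu(\cE_B)\ll 1$ does not close that case as stated. This is a fringe issue: the paper itself silently restricts to $B\ge N^{1/2+\varepsilon}$ (its proof invokes the structure lemma, which requires this, and in the only application, Lemma~\ref{lem:Lp}, one has $B\ge K=N^{1/2+\varepsilon}$), and since in all uses $t<2$ you can simply fix $\varepsilon<(2-t)/3$ to make your trivial estimate correct; but as written the justification of the small-$B$ case is imprecise.
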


\begin{proof}
Denote 
$$
Q=(NB^{-1})^{2} N^{o(1)}.
$$
From Lemma~\ref{lem:structure of large Gauss} we conclude that 
$$
\{\vx\in \T_2:~ |G(\vx; M, N)|\ge B\}\subseteq \bigcup_{q\le Q}\bigcup_{(a_1, a_2)\in [q]^2} \fR_{q, b_1, b_2},
$$
where $\fR_{q, b_1, b_2}$ is a rectangle with side lengths
$$
(M+N) B^{-2} q^{-1}  N^{ o(1)} \mand   B^{-2} q^{-1}  N^{ o(1)}.
$$  

For $Z\ge 1$, we write $z\sim Z$ to denote that $Z/2< z\le Z$. Denote 
$$
\delta_z= B^{-2} z^{-1} . 
$$  
Taking a dyadic partition of the interval $[1, Q]$, we see that there is a number $1\le Z\le Q$ such that 
\begin{align*}
\mu(\cE_B)&\le N^{o(1)} \sum_{q\sim Z} \sum_{(a_1, a_2)\in [q]^2} \mu(\fR_{q, a_1, a_2})\\
&\le N^{o(1)} Z^3\min\{(M+N)\delta_Z^{t}, (M+N)^{t}\delta_Z^{t}\}\\
&\le N^{o(1)}\min\{I, J\} ,
\end{align*} 
where 
$$
I=  Z^{3}(M+N)\delta_Z^{t}
\le Z^{3-t}B^{-2t}(M+N)\le N^{6-2t }B^{-6}(M+N),
$$
and 
$$
J= Z^{3}(M+N)^{t}\delta_Z^{t}\le N^{6-2t }B^{-6}(M+N)^{t},
$$
which finishes the proof.
\end{proof}

\subsection{Bounds of Weyl sums}

Corresponding to the bounds of Gauss sums, we have the following estimation for Weyl sums which is  needed for the proof of Theorem~\ref{thm:Weyl}.

An integer number $n$ is called
\begin{itemize}
\item  \textit{$r$-th power free} if   any prime number $p \mid n$ satisfies $p^r \nmid n$; 

\item  \textit{$r$-th power  full} if any prime number  $p \mid n$ satisfies $p^r \mid n$. 
\end{itemize}
We note that $1$ is both cube free and cube full.

For any integer $i\ge 2$ it is  convenient to denote   $$
\sQ_{i}=\{n \in \N:~  \text{$n$ is $i$-th power full}\} \quad \text{and} \quad
\sQ_{i}(x)= \sQ_i\cap [1,x].
$$ 
The classical result of  Erd{\H o}s and  Szekeres~\cite{ErdSz} gives an asymptotic 
formula for the cardinality of $\sQ_{i}(x)$ which we present here in a very relaxed form 
as the upper bound 
\begin{equation}
\label{eq:d-full}
\# \sQ_{i}(x) \ll x^{1/i} . 
\end{equation}

The following Lemma~\ref{lem:Struct Large Weyl} comes from~\cite[Lemma 2.7]{BCS}.

\begin{lemma}
\label{lem:Struct Large Weyl}
We fix $d\ge 3$, some  $\varepsilon > 0$, and  suppose  that for a real 
$$
A> N^{1-1/D + \varepsilon}, 
$$ 
where $D$ is given by~\eqref{eq:D}, 
we have $|\sfS_{d}(\vx; N)| \ge A$ for some $\vx\in \R^d$. 
Then there exist positive integers $q_2, \ldots,  q_d$ with  $\gcd(q_i,q_j) = 1$, $2 \le i < j \le d$, and  such that  
\begin{itemize} 
\item[(i)] $q_2$ is  cube  free,
\item[(ii)]  $q_i$ is $i$-th power full but $(i+1)$-th power free when  $3\le i\le d-1$,
\item[(iii)]  $q_d$ is   $d$-th power full, 
\end{itemize}
and
$$
 \prod_{i=2}^d q_i^{1/i} \le N^{1+o(1)}A^{-1} 
 $$
and integers $b_1, \ldots, b_d$ 
such that 
$$
\left|x_j-\frac{b_j}{q_2\ldots q_d}\right |\le (NA^{-1})^{d} N^{-j+o(1)}  \prod_{i=2}^d q_i^{-d/i} , \qquad j=1, \ldots, d.
$$
\end{lemma}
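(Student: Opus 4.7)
The plan is to combine a quantitative Weyl/Vinogradov rational approximation with the multiplicative structure of complete polynomial exponential sums. First, from the hypothesis $|\sfS_d(\vx;N)| \ge A > N^{1-1/D+\varepsilon}$, I would invoke the quantitative Weyl inequality, using classical Weyl differencing when the effective exponent is $2^{d-1}$ and the Bourgain--Demeter--Guth~\cite{BDG} and Wooley~\cite{Wool-3} resolution of the Vinogradov mean value theorem when it is $2d(d-1)$. Whichever route applies produces a denominator $q$ and numerators $a_1, \ldots, a_d$ with
$$
1 \le q \le N^{o(1)}(NA^{-1})^d \mand \left|x_j - \frac{a_j}{q}\right| \le q^{-1}(NA^{-1})^d N^{-j+o(1)}, \qquad j = 1, \ldots, d.
$$
A standard completion/partial-summation argument using this approximation then transfers the lower bound to the complete Weyl sum $S(q;\va) = \sum_{n=1}^q \e((a_1 n + \cdots + a_d n^d)/q)$, giving $|S(q;\va)| \gg qA/N$ up to $N^{o(1)}$ factors.

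Next I would factor $q$ multiplicatively according to the $p$-adic valuations of its prime-power components: each $p^e \| q$ with $e \in \{1,2\}$ is placed into $q_2$; each $p^e \| q$ with exponent $e = i$ exactly, $3 \le i \le d-1$, into $q_i$; and each $p^e \| q$ with $e \ge d$ into $q_d$. The resulting integers $q_2, \ldots, q_d$ are automatically pairwise coprime, satisfy the arithmetic conditions of the statement, and multiply to $q$. By the Chinese remainder theorem, $S(q;\va)$ factorises as a product of local sums over the $q_i$, and the Hua--Chen bound for complete polynomial sums gives each factor bounded by $q_i^{1-1/i + o(1)}$ --- this is the Weil square-root bound for the cube-free modulus $q_2$ and the classical Hua bound when $q_i$ is $i$-th power full with a unit leading coefficient. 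Multiplying these local estimates produces $|S(q;\va)| \le q \prod_{i=2}^d q_i^{-1/i + o(1)}$, and comparing with the lower bound from the first step yields
$$
\prod_{i=2}^d q_i^{1/i} \le N^{1+o(1)} A^{-1}.
$$

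Finally, rewriting the fractions $a_j/q$ with the common denominator $q_2 \cdots q_d = q$ produces the integers $b_j$, and a refined major-arc analysis --- in which the localisation scale for each coordinate $x_j$ is dictated by $(NA^{-1})^d N^{-j} \prod q_i^{-d/i}$, obtained by optimising the major-arc thresholds against the component structure of $q$ --- yields the stated bound. The \emph{main obstacle} is the middle step: establishing the local Hua--Chen bound for $|S(p^e;\va)|$ uniformly in the prime power, with the exponent $1 - 1/i$ tied correctly to the exponent $e$ and to the effective degree of the polynomial modulo $p^e$, so that the CRT product matches the decomposition $q = q_2 \cdots q_d$ cleanly and extracts exactly the $\prod q_i^{1/i}$ saving rather than the weaker $q^{1/d}$ saving. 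This multiplicative refinement is the content of~\cite[Lemma~2.7]{BCS}, from which the lemma is quoted.
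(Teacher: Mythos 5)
Your proposal ends exactly where the paper does: the paper offers no internal proof of this lemma, stating only that it ``comes from~\cite[Lemma~2.7]{BCS}'', and you likewise defer the technical core (the local bounds matched to the decomposition $q = q_2\cdots q_d$) to that same citation, so your treatment is essentially the same as the paper's. Your preliminary sketch (quantitative Weyl approximation, CRT factorisation of the complete sum, Hua-type local estimates) is a reasonable outline of the kind of argument carried out in~\cite{BCS} and in Baker's earlier classification papers, but it is not required here and is not reproduced in the paper.
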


We now need a version of  Lemma~\ref{lem:Struct Large Weyl} for the sums
$$
\sfS_{d}(\vx; M, N)=\sum_{n=1}^{N}\e\(x_1(M+n)+\ldots+x_d(M+n)^{d}\)
$$
 over arbitrary intervals. 

\begin{lemma}
\label{lem:structure of large S} 
We fix $d\ge 3$ and some  $\varepsilon > 0$.
Let $M\ge 0$  and $N\ge 1$. 
Suppose $\vx\in \T_d$ and 
$$
|\sfS_{d}(\vx; M, N)|\ge B \ge N^{1-1/D+\varepsilon}, 
$$
where $D$ is given by~\eqref{eq:D}. 
Then there exists $q=q_2q_3\ldots q_d$ with  $\gcd(q_i,q_j) = 1$, $2 \le i < j \le d$, and  such that 
\begin{itemize} 
\item[(i)] $q_2$ is  cube  free,
\item[(ii)]  $q_i$ is $i$-th power full but $(i+1)$-th power free when  $3\le i\le d-1$,
\item[(iii)]  $q_d$ is   $d$-th power full, 
\end{itemize}
and 
$$
q_2^{1/2}q_3^{1/3}\ldots q_d^{1/d}\le N^{1+o(1)}B^{-1},
$$
and there are $a_1, \ldots, a_d \in [q]$ such that 
\begin{equation}
\label{eq:x_k}
x_k=\frac{a_k}{q}+O\(\(M+N\)^{d-k}r\), \qquad k=1,\ldots, d,
\end{equation}
where
\begin{equation}
\label{eq:r}
r=N^{o(1)}B^{-d}\prod_{i=2}^{d}q_i^{-d/i}.
\end{equation}
\end{lemma}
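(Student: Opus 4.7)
The plan is to reduce the statement to the $M=0$ case handled by Lemma~\ref{lem:Struct Large Weyl}, via the binomial shift of the summation index. Expanding $(M+n)^k$ by the binomial theorem and collecting like powers of $n$ yields
\[
\sum_{k=1}^{d} x_k (M+n)^k = \psi_0 + \sum_{j=1}^{d} y_j n^j, \qquad y_j = \sum_{k=j}^d \binom{k}{j} M^{k-j} x_k,
\]
where $\psi_0 = \sum_{k=1}^d x_k M^k$ is independent of $n$. Hence $\sfS_d(\vx; M, N) = \e(\psi_0)\, \sfS_d(\vy; N)$, so the hypothesis gives $|\sfS_d(\vy; N)| \ge B$.

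Next, I would apply Lemma~\ref{lem:Struct Large Weyl} to $\vy$ with $A = B$, which is legal since $B \ge N^{1-1/D+\varepsilon}$. This supplies pairwise coprime integers $q_2, \ldots, q_d$ satisfying conditions (i)--(iii) of the present lemma, the bound $\prod_{i=2}^d q_i^{1/i} \le N^{1+o(1)} B^{-1}$, and integers $b_1, \ldots, b_d$ with
\[
\left| y_j - \frac{b_j}{q} \right| \le N^{d-j+o(1)} B^{-d} \prod_{i=2}^d q_i^{-d/i} = N^{d-j} r, \qquad j = 1, \ldots, d,
\]
where $q = q_2 \cdots q_d$ and $r$ is as in~\eqref{eq:r}, the $N^{o(1)}$ in the definition of $r$ absorbing the one above.

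It then remains to transfer this approximation from $\vy$ back to $\vx$. Since the triangular change of variables $\vx \mapsto \vy$ is inverted by the analogous shift with $-M$ in place of $M$, namely
\[
x_k = \sum_{j=k}^d \binom{j}{k}(-M)^{j-k} y_j,
\]
I would set $c_k = \sum_{j=k}^d \binom{j}{k}(-M)^{j-k} b_j \in \Z$ and use the triangle inequality together with the trivial bound $M^{j-k} N^{d-j} \le (M+N)^{d-k}$ (valid for $k \le j \le d$) to get
\[
\left| x_k - \frac{c_k}{q} \right| \ll \sum_{j=k}^{d} M^{j-k} N^{d-j} r \ll (M+N)^{d-k} r.
\]
Finally, to achieve $a_k \in [q]$ as required in~\eqref{eq:x_k}, I would reduce $c_k$ modulo $q$, using the invariance of both the hypothesis and the conclusion under $x_k \mapsto x_k + 1$ to assume $x_k \in [0, 1)$ and to absorb any wrap-around of size $O(1/q)$ into the error term.

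The main obstacle I anticipate is only this last bookkeeping step, of reducing $c_k \bmod q$ while preserving the bound $(M+N)^{d-k} r$; this is a minor technicality since $(M+N)^{d-k} r$ dominates $1/q$ in the nontrivial regime. The substantive content is the binomial shift and the triangular inversion, both purely algebraic, after which Lemma~\ref{lem:Struct Large Weyl} directly furnishes the arithmetic structure of $q$.
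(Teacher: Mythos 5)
Your proposal is correct and follows essentially the same route as the paper: the binomial shift to pass from $\sfS_d(\vx;M,N)$ to $\sfS_d(\vy;N)$, an application of Lemma~\ref{lem:Struct Large Weyl} with $A=B$, and a transfer of the rational approximations back to $\vx$ using $M^{j-k}N^{d-j}\le (M+N)^{d-k}$, the only difference being that you invert the triangular change of variables explicitly via $x_k=\sum_{j\ge k}\binom{j}{k}(-M)^{j-k}y_j$ while the paper performs the same inversion by downward induction on $k$. The final normalization of the numerators into $[q]$ is handled in the paper by exactly the dichotomy you describe (either the error bound exceeds $1/q$, so a trivial approximation suffices, or it is below $1/q$ and the integer automatically lies in the admissible range), as in the proof of Lemma~\ref{lem:structure of large Gauss-MN}.
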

\begin{proof}
The coefficient of $n^{k}$ in $(M+n)x_1+\ldots +(M+n)^{d}x_d$ is 
\begin{equation}
\label{eq:y_k}
y_k=\sum_{j=k}^{d} \binom{j }{k} M^{j-k}x_j.
\end{equation}
Note that $y_d=x_d$. It follows that 
\begin{align*}
&\left |\sum_{n=1}^{N}\e\((M+n)x_1+\ldots  +(M+n)^{d}x_d\)\right | \\
&\quad \quad \quad\quad\quad\quad=\left |\sum_{n=1}^{N}\e\(y_1n+\ldots+y_dn^{d}\)\right |.
\end{align*}
Then we have the following equivalence 
$$
|\sfS_{d}\(\vx; M, N\)|\ge B\quad \Longleftrightarrow \quad |\sfS_{d}(\vy; N)|\ge B.
$$  
By Lemma~\ref{lem:Struct Large Weyl}  there exist positive integers $q_2 \ldots q_d$ with the above mentioned properties $(i), (ii), (iii)$  and  
$$
 \prod_{i=2}^d q_i^{1/i} \le N^{1+o(1)}A^{-1},
$$
and integers $b_1, \ldots, b_d$ such that 
\begin{equation}
\label{eq:y_j}
\left|y_j-\frac{b_j}{q_2\ldots q_d}\right |\le N^{d-j} r, \qquad j=1, \ldots, d,
\end{equation}
where $r$ is given by~\eqref{eq:r}. 

We now going to show~\eqref{eq:x_k} holds by induction. First note that since $x_d=y_d$, we have the bound 
$$
x_d=\frac{b_d}{q_2\ldots q_d}+O(r).
$$
Suppose that~\eqref{eq:x_k} hold for any $k+1\le j\le d$, that is, there exist $a_j$, $k+1\le j\le d$, such that 
\begin{equation}
\label{eq:induction}
x_j=\frac{a_j}{q}+O\(\(M+N\)^{d-j}r\), \quad j=k+1,\ldots, d.
\end{equation}
Applying~\eqref{eq:y_k},~\eqref{eq:y_j}, we derive that 
$$
\left|x_k +\sum_{j=k+1}^{d}{j \choose k} M^{j-k}x_j -\frac{b_k}{q_2\ldots q_d}\right |\le N^{d-k} r.
$$
Combining with~\eqref{eq:induction} we conclude that there exists $a_k$ such that 
\begin{align*}
\left | x_k- \frac{a_k}{q_2\ldots q_d}\right |& \le N^{d-k}r  +O\(\sum_{j=k+1}^{d}M^{j-k}(M+N)^{d-j}r\)\\
&\ll (M+N)^{d-k}r,
\end{align*}
from which we obtain~\eqref{eq:x_k} by  induction.

Applying similar argument as in the proof of Lemma~\ref{lem:structure of large Gauss-MN}, we can always find $a_1, a_2, \ldots a_d \in [q_2\ldots q_d]$  such that the desired property hold.
\end{proof}

Suppose that $\cR$ is a rectangle with side lengths 
$$
K^{d-1}\delta\ge K^{d-2}\delta\ge \ldots \ge K\delta\ge \delta
$$
for some constants $K\ge 1, \delta>0$. Then, by elementary geometric argument, for any integer $1\le h\le d$ we can cover $\cR$ by $O(K^{h(h-1)/2})$
cubes with side length $K^{d-h}\delta$.  Furthermore, suppose that $\mu$ is a Radon measure satisfying~\eqref{eq:s-Holder}, then we conclude that 
\begin{equation}
\label{eq:mu(R)}
\mu(\cR)\ll K^{h(h-1)/2}\left (K^{d-h}\delta\right )^{t}.
\end{equation}

\begin{lemma}
\label{lem:frequency-d>2} 
We fix $d\ge 3$ and some  $\varepsilon > 0$.
Let $\mu$ be a Radon measure satisfying~\eqref{eq:s-Holder} and
let  $B$ be a real number with $ N^{1-1/D+\varepsilon}\le B\le N$, where $D$ is given by~\eqref{eq:D}. 
For fixed $M, N\in \N$  denote 
$$
\cE_B =\{\vx\in \T_d:~|\sfS_{d}(\vx; M, N)|\ge B\}.
$$
Then for any integer $h$ with $1\le h\le d$ we have 
$$
\mu\(\cE_B\)\le B^{-d^2-1}N^{d^2+1-dt+o(1)}(M+N)^{(d-h)t+h(h-1)/2}.
$$
\end{lemma}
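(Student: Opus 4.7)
The plan is to mirror the $d=2$ argument of Lemma~\ref{lem:frequency}, using the structural description provided by Lemma~\ref{lem:structure of large S} in place of Lemma~\ref{lem:structure of large Gauss-MN}, and the rectangle bound~\eqref{eq:mu(R)} in place of the cruder two-dimensional one. Concretely, for every $\vx \in \cE_B$ I use Lemma~\ref{lem:structure of large S} to obtain a factorization $q = q_2 q_3 \cdots q_d$ (satisfying (i)--(iii) and $\prod_{i=2}^{d} q_i^{1/i} \le N^{1+o(1)} B^{-1}$) and integers $a_1, \ldots, a_d \in [q]$ placing $\vx$ inside a rectangle $\cR_{\vq,\va}$ of side lengths $(M+N)^{d-k} r$ for $k = 1, \ldots, d$, with $r = N^{o(1)} B^{-d} \prod_{i=2}^d q_i^{-d/i}$. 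Applying~\eqref{eq:mu(R)} with $K = M+N$ and $\delta = r$ yields, for any chosen $h \in \{1, \ldots, d\}$,
$$
\mu(\cR_{\vq,\va}) \ll (M+N)^{(d-h)t + h(h-1)/2}\, r^{t}.
$$

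Next I would count. Splitting each $q_i$ dyadically into $q_i \sim Q_i$, condition (i) (cube-free) gives $\ll Q_2$ values of $q_2$, while conditions (ii)--(iii) combined with~\eqref{eq:d-full} give $\ll Q_i^{1/i}$ values of $q_i$ for $3 \le i \le d$. The tuple $\va \in [q]^d$ contributes at most $q^d \le (Q_2 \cdots Q_d)^d$ choices. Multiplying the per-rectangle measure by this count, the contribution of a single dyadic range is
$$
\ll N^{o(1)}\, B^{-dt}\, (M+N)^{(d-h)t + h(h-1)/2} \, Q_2^{\,d+1 - dt/2} \prod_{i=3}^{d} Q_i^{\,1/i + d - dt/i}.
$$

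Now I optimize over dyadic ranges subject to the constraint. Setting $y_i = Q_i^{1/i} \ge 1$, the exponent on $y_i$ in the above product equals $2 + 2d - dt$ for $i = 2$ and $1 + id - dt$ for $i \ge 3$. Since $t \le d$ these exponents are all bounded, and for $d \ge 3$ one checks that the maximum is $1 + d^2 - dt > 0$, attained at $i = d$ (the comparison $1 + d^2 - (2+2d) = d^2 - 2d - 1 > 0$ holds for $d \ge 3$). Using $\prod_i y_i \le N^{1+o(1)}B^{-1}$ and $y_i \ge 1$, the product of the $y_i^{\beta_i}$'s is maximized by placing all of the budget on the $i = d$ factor, giving the bound $(N/B)^{1 + d^2 - dt} N^{o(1)}$. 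Substituting back, the $B^{-dt}$ from $r^t$ combines with $B^{-(1+d^2-dt)}$ to produce the claimed $B^{-d^2 - 1}$, and the $N^{1+d^2-dt}$ factor appears. Since the number of relevant dyadic boxes is $N^{o(1)}$, summation over them is absorbed into the error term and gives
$$
\mu(\cE_B) \le B^{-d^2-1}\, N^{d^2+1 - dt + o(1)}\, (M+N)^{(d-h)t + h(h-1)/2},
$$
as desired.

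The main obstacle is the optimization step: verifying that among the exponents $\{2+2d-dt\} \cup \{1 + id - dt : 3 \le i \le d\}$ the largest is $1 + d^2 - dt$ precisely when $d \ge 3$, and confirming that it is positive for all admissible $t$. Once this combinatorial/calculus check is dispatched, everything else is a straightforward bookkeeping exercise built on Lemma~\ref{lem:structure of large S},~\eqref{eq:d-full}, and~\eqref{eq:mu(R)}, with the $N^{o(1)}$ losses absorbing the logarithmic factors from the dyadic decomposition.
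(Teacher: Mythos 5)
Your proposal is correct and follows essentially the same route as the paper: the same covering of $\cE_B$ via Lemma~\ref{lem:structure of large S} together with the rectangle bound~\eqref{eq:mu(R)}, the same dyadic decomposition with the count~\eqref{eq:d-full}, and your optimization in the variables $y_i=Q_i^{1/i}$ (all exponents dominated by $1+d^2-dt$, budget $\prod y_i\le N^{1+o(1)}B^{-1}$) is precisely the paper's step $\alpha_j\le d\alpha_d/j$ yielding the factor $Q^{\alpha_d}$ with $Q=(NB^{-1})^d$. No gaps beyond the routine observation that $t\le d$ ensures the positivity you flag.
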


\begin{proof}
Denote  
\begin{equation} \label{eq:def Q}
Q = (NB^{-1})^d, 
\end{equation}
and fix some  $\eta> 0$.  
By Lemma~\ref{lem:structure of large S}, we conclude that 
\begin{equation}
 \label{eq: E_B R}
\cE_B\subseteq \bigcup_{(q_2, \ldots, q_d)\in \Omega}\bigcup_{\va\in [q_2\ldots q_d]^{d}}\cR_{q_2,\ldots, q_d, \va}, 
\end{equation}  
where, slightly relaxing the conditions of Lemma~\ref{lem:structure of large S}, 
we can take
\begin{equation}
\begin{split} 
 \label{eq: Omega}
\Omega =\biggl\{\(q_2, \ldots, q_d\)\in \N^{d-1}:~ q_j \in \sQ_j,  & \ 3 \le j \le d, \\
&   \prod_{j=2}^d q_j^{1/j} \le C Q^{1/d} N^{\eta}  \biggr\}
\end{split} 
\end{equation}
and  $\cR_{q_2,\ldots, q_d, \va}$ is a rectangle with side lengths 
$$
(M+N)^{d-1}r_{q_2,\ldots, q_d}  \ge \ldots \ge (M+N)r_{q_2,\ldots, q_d}  \ge r_{q_2,\ldots, q_d} ,
$$
with  some 
$$
r_{q_2,\ldots, q_d} =N^{o(1)}B^{-d}\prod_{j=2}^{d}q_j^{-d/j}.
$$
Let  $1\le h\le d$ be an integer.  Combining~\eqref{eq: E_B R} with the bound~\eqref{eq:mu(R)}, we obtain 
$$
\mu\(\cE_B\)\ll \sum_{(q_2, \ldots, q_d)\in \Omega} (q_2\ldots q_d)^{d}r_{q_2,\ldots, q_d} ^{t}(M+N)^{(d-h)t+h(h-1)/2}. 
$$
Covering  $\Omega$ by $O\(\(\log N\)^{d-1}\)$ dyadic boxes, we see that 
that there are some integers  $Q_2, \ldots, Q_d \ge 1$ with 
\begin{equation}
\label{eq:prod-Q_j}
\prod_{j=2}^d Q_j^{1/j} \ll Q^{1/d} N^{\eta}
\end{equation}
such that  
\begin{align*}
\mu\(\cE_B\)&\ll \(\log N\)^{d-1}   (M+N)^{(d-h)t+h(h-1)/2} \\
& \qquad  \qquad  \qquad  \qquad \times  \sum_{\substack{q_2 \sim Q_2, \ldots, q_d \sim Q_d\\
q_3\in \sQ_3\(Q_3\), \ldots, q_d\in \sQ_d\(Q_d\)}} (q_2\ldots q_d)^{d} r_{q_2,\ldots, q_d} ^{t}\\
&= N^{o(1)}B^{-dt}(M+N)^{(d-h)t+h(h-1)/2}  \\
& \qquad \qquad \qquad \qquad \qquad  \times \sum_{\substack{q_2 \sim Q_2, \ldots, q_d \sim Q_d\\
q_3\in \sQ_3\(Q_3\), \ldots, q_d\in \sQ_d\(Q_d\)}} \prod_{j=2}^{d}q_j^{d - dt/j}\\
&= N^{o(1)}B^{-dt}(M+N)^{(d-h)t+h(h-1)/2} \\
& \qquad \qquad \qquad \qquad \qquad \times  Q_2^{d - dt/2}   \prod_{j=3}^{d} \(Q_j^{d - dt/j}  \# \sQ_j\(Q_j\)\). 
\end{align*}
Recalling~\eqref{eq:d-full} we derive 
\begin{equation} \label{eq:mu(E_B)}
\mu\(\cE_B\)  \le N^{o(1)}B^{-dt}(M+N)^{(d-h)t+h(h-1)/2} \prod_{j=2}^{d}Q_j^{\alpha_j},
\end{equation}
where 
$$
\alpha_2=d+1-dt/2, \mand  \alpha_j= d+1/j-dt/j, \quad 3\le j\le d.
$$
Observe that for  $j=2, \ldots, d$, we have 
\begin{equation} \label{eq:alpha_i}
 \alpha_j \le    d\alpha_d /j , 
\end{equation}
which for  $j \ge 3$ is obvious from 
 $$
 j \alpha_j= dj +1-dt \le d^2+1-dt =  d\alpha_d
 $$
 and for $j=2$ from 
 $$
2 \alpha_2 = 2d +2  -dt \le d^2+1-dt
 $$
since $d \ge 3$. 
 
Therefore, in view of~\eqref{eq:alpha_i}, recalling~\eqref{eq:prod-Q_j}, we obtain
$$
\prod_{j=2}^{d}Q_j^{\alpha_j}\le \(\prod_{j=2}^{d}Q_j^{1/j}\)^{d\alpha_d} \ll Q^{\alpha_d} N^{d 
\alpha_d \eta}
$$
Then combining this with~\eqref{eq:mu(E_B)} we obtain
$$
\mu\(\cE_B\)  \le N^{o(1)}B^{-dt}(M+N)^{(d-h)t+h(h-1)/2} Q^{\alpha_d} N^{d \eta}.
$$
Recalling the choice of $Q$ in~\eqref{eq:def Q}, 
since $\eta>0$ is arbitrary we obtain the desired bound.
\end{proof}

We need the following analogue of Lemma~\ref{lem:frequency-d>2}.

\begin{lemma} 
\label{lem:frequency-f} We fix  some  $\varepsilon > 0$.
Let $f\in \R[X]$ be a polynomial of degree $d$.
Let $\mu$ be a Radon measure satisfying~\eqref{eq:s-Holder} and 
let  $B$ be a real number with $ N^{1-1/D+\varepsilon}\le B\le N$,  where $D$ is given by~\eqref{eq:D}. For fixed $M, N\in \N$  denote 
$$
\sE_B =\{\vx\in \T:~|V_f(x; M, N)|\ge B\}.
$$
Then 
$$
\mu\(\sE_B\)\le 
N^{o(1)}\begin{cases}  
 N^{4- 2t }   B^{-4}    & \text{if}\  d=2,\\
 N^{d +1 - dt} B^{-d-1} & \text{if}\ d  \ge 3.
 \end{cases} 
$$
\end{lemma}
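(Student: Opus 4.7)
The plan is to reduce the one-parameter Weyl sum to the multivariate one and then essentially follow the argument of Lemma~\ref{lem:frequency-d>2}. Write $f(X) = \sum_{j=0}^d c_j X^j$ with $c_d \ne 0$. Since $|\e(xc_0)| = 1$ contributes only a unimodular factor, we have $|V_f(x; M, N)| = |\sfS_d(\vx; M, N)|$ with $\vx = (xc_1, \ldots, xc_d)$, so membership of $x \in \sE_B$ is equivalent to membership of this particular $\vx$ in the corresponding set of $d$-tuples with large Weyl sum.

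For $d \ge 3$, I would apply Lemma~\ref{lem:structure of large S} to this $\vx$. The constraint that actually pins down $x$ is the one on the top coefficient: $|xc_d - b_d/q| \ll r$, where $r = N^{o(1)} B^{-d} \prod_{i=2}^d q_i^{-d/i}$. This confines $x$ to a union of intervals of length $\ll r$ (constant depending on $c_d$), indexed by $b_d \in [q]$ and by admissible tuples $(q_2, \ldots, q_d) \in \Omega$ from the proof of Lemma~\ref{lem:frequency-d>2}. Invoking the Frostman property $\mu(I) \ll |I|^t$ and noting that the number of $b_d \in [q]$ is $q_2 \cdots q_d$, one gets
$$
\mu(\sE_B) \le N^{o(1)} B^{-dt} \sum_{(q_2, \ldots, q_d) \in \Omega} \prod_{j=2}^d q_j^{1-dt/j}.
$$
A dyadic decomposition $q_j \sim Q_j$, together with $\#\sQ_j(Q_j) \ll Q_j^{1/j}$ for $j \ge 3$, reduces the sum to $\prod_{j=2}^d Q_j^{\alpha_j}$ with $\alpha_2 = 2 - dt/2$ and $\alpha_j = (j+1-dt)/j$ for $3 \le j \le d$. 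A short check shows $j\alpha_j \le d+1-dt$ uniformly in $j \in \{2,\ldots,d\}$ (the $j=2$ case using $d \ge 3$), and combining with the Lemma~\ref{lem:structure of large S} bound $\prod Q_j^{1/j} \ll N^{1+o(1)}B^{-1}$ gives $\prod Q_j^{\alpha_j} \ll (N/B)^{d+1-dt} N^{o(1)}$, hence $\mu(\sE_B) \le N^{d+1-dt+o(1)} B^{-d-1}$.

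For $d = 2$ I would instead apply Lemma~\ref{lem:structure of large Gauss-MN} to $\vx = (xc_1, xc_2)$; the tight constraint $|xc_2 - b_2/q| \ll q^{-1}B^{-2} N^{o(1)}$ confines $x$ to $\le q$ intervals of length $\ll q^{-1}B^{-2} N^{o(1)}$ for each $q \le (N/B)^2 N^{o(1)}$. Since here $t \le 1$ (the measure lives on $\T$), a direct dyadic summation in $q$ yields
$$
\mu(\sE_B) \ll N^{o(1)} B^{-2t} \sum_{q \le Q} q^{1-t} \ll N^{o(1)} B^{-2t} \bigl(N/B\bigr)^{2(2-t)} = N^{4-2t+o(1)} B^{-4}.
$$

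The only genuinely new feature compared with Lemma~\ref{lem:frequency-d>2} is that we now have a single free parameter in place of $d$, so we sum lengths of intervals rather than areas of rectangles, and the number of residues indexing the cover drops from $q^d$ to $q$; this is what changes the exponent $B^{-d^2-1}$ there to $B^{-d-1}$ here, and similarly drops the power of $N$. I expect the optimization step with the $\alpha_j$ to present no additional obstacle since it is essentially the same computation as in the proof of Lemma~\ref{lem:frequency-d>2}; the main thing to be careful about is verifying that the $j=2$ term does not dominate once $d \ge 3$.
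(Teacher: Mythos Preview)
Your proposal is correct and follows essentially the same route as the paper. The only cosmetic differences are: (i) the paper observes directly that the leading coefficient of $f(n+M)$ equals that of $f(n)$, so it applies Lemma~\ref{lem:Struct Large Weyl} to the unshifted sum rather than invoking Lemma~\ref{lem:structure of large S}, but this yields the identical constraint $|c_d x - b/q| \ll r$ on the top coefficient; and (ii) the paper packages the optimization slightly differently by setting $R = \prod_j Q_j^{1/j}$ and bounding $\prod_{j\ge 3} Q_j \le (RQ_2^{-1/2})^d$, whereas you use the equivalent inequality $j\alpha_j \le d+1-dt$. One tiny caveat: since $c_d$ need not lie in $[0,1]$, the range of the numerator $b_d$ is $|b_d| \ll |c_d|\,q$ rather than literally $[q]$ (the paper records this as $|b| \le 2|\beta_d| q_2\cdots q_d$), but this only affects implied constants.
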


\begin{proof}  We proceed as in  the proof  of Lemma~\ref{lem:frequency-d>2}. In particular, we fix some  $\eta> 0$
and define $Q$ by~\eqref{eq:def Q}. 

Suppose that 
$$
f(n)=\beta_0+\beta_1n+\ldots+\beta_d n^d,
$$
where $\beta_i\in \R, 0\le i\le d$ and $\beta_d\neq 0$. Since the leading coefficient of $f(n+M)$ coincides with that of $f(n)$, that is $\beta_d$, we see that 
$$
\left |\sum_{n=1}^{N}\e\(xf(n+M)\)\right | =\left |\sum_{n=1}^{N}\e\(y_1n+\ldots+y_{d}n^{d}\)\right |,
$$
where $y_i$, $0\le i\le d$ depend on $M, x$ and in particular $y_d=x\beta_d$.  It follows that if  $|V_f(x; M, N)|\ge B$ for some $x\in \T$ then 
$$
 \left |\sum_{n=1}^{N}\e\(y_1n+\ldots+y_{d}n^{d}\)\right|\ge B,
$$  
where $y_d=\beta_d x$. By Lemma~\ref{lem:Struct Large Weyl} there are $q_2, q_3, \ldots, q_d$, which satisfy the conditions of Lemma~\ref{lem:Struct Large Weyl} and some integer $b$ such that 
$$
\left |\beta_d x-\frac{b}{q_2\ldots q_d}\right|\le N^{o(1)}B^{-d} \prod_{i=2}^{d}q_i^{-d/i},
$$
which is equivalent to 
\begin{equation}
\label{eq:x}
|x-\frac{b}{q_2\ldots q_d \beta_d}|\le N^{o(1)}B^{-d} \prod_{i=2}^{d}q_i^{-d/i}.
\end{equation}
Since $|\beta_d x|\le |\beta_d|$, we derive that 
$$
\frac{|b|}{q_2\ldots q_d}\le |\beta_d|+N^{o(1)}B^{-d} \prod_{i=2}^{d}q_i^{-d/i},
$$
 and thus  
\begin{equation}
\label{eq:range of b}
|b|\le  2 |\beta_d| q_2\ldots q_d
\end{equation}
provided  that $N$ is large enough. It follows that for  large enough $N$ we have 
\begin{equation}
 \label{eq: SS_B R}
\sE_B\subseteq \bigcup_{(q_2, \ldots, q_d)\in \Omega}\bigcup_{ \substack{b\in \Z\\ |b|\le 2 |\beta_d| q_2\ldots q_d }}\cI_{q_2,\ldots, q_d, b}, 
\end{equation}
where, as in the proof  of Lemma~\ref{lem:frequency-d>2}, the set $\Omega$ is given by~\eqref{eq: Omega} 
and  $\cI_{q_2,\ldots, q_d, b}$ is an interval of  length 
$$
|\cI_{q_2,\ldots, q_d, b}|\le N^{o(1)}B^{-d}\prod_{j=2}^{d}q_j^{-d/j}.
$$
Hence we derive from~\eqref{eq:s-Holder},~\eqref{eq:x},~\eqref{eq:range of b} and~\eqref{eq: SS_B R}, that 
\begin{align*}
\mu\(\sE_B\) & \le  N^{o(1)}B^{-dt}  \sum_{(q_2, \ldots, q_d)\in \Omega} \prod_{j=2}^{d}q_j \(\prod_{j=2}^{d}q_j^{-d/j}\)^t\\
& \le  N^{o(1)}B^{-dt}  \sum_{(q_2, \ldots, q_d)\in \Omega} \prod_{j=2}^{d}q_j^{1-dt/j}. 
\end{align*}
Again  as in the proof  of Lemma~\ref{lem:frequency-d>2}, covering  $\Omega$ by $O\(\(\log N\)^{d-1}\)$ dyadic boxes, we see that 
that there are some integers  $Q_2, \ldots, Q_d \ge 1$ with~\eqref{eq:prod-Q_j}
such that  
\begin{align*}
\mu\(\sE_B\)&\le  N^{o(1)}B^{-dt}  
  \sum_{\substack{q_2 \sim Q_2, \ldots, q_d \sim Q_d\\
q_3\in \sQ_3\(Q_3\), \ldots, q_d\in \sQ_d\(Q_d\)}}  \prod_{j=2}^{d}q_j^{1-dt/j}\\
&\le  N^{o(1)}B^{-dt}   \prod_{j=2}^{d}Q_j^{1-dt/j} \sum_{\substack{q_2 \sim Q_2, \ldots, q_d \sim Q_d\\
q_3\in \sQ_3\(Q_3\), \ldots, q_d\in \sQ_d\(Q_d\)}} 1\\
&\le  N^{o(1)}B^{-dt} Q_2^{2 - dt/2}   \prod_{j=3}^{d} \(Q_j^{1 - dt/j}  \# \sQ_j\(Q_j\)\). 
\end{align*}
Thus, by~\eqref{eq:d-full}, we have 
\begin{equation}
\label{eq:prelim 1}
\mu\(\sE_B\) \le  N^{o(1)}B^{-dt} Q_2^{2 - dt/2}   \prod_{j=3}^{d} Q_j^{1 - (dt-1)/j} . 
\end{equation}
Denote 
$$
\prod_{j=2}^d Q_j^{1/j}  = R. 
$$
Then we can rewrite~\eqref{eq:prelim 1} as 
\begin{equation}
\label{eq:prelim 2}
\mu\(\sE_B\) \le  N^{o(1)}B^{-dt}  R^{ - dt+1}  Q_2^{3/2}    \prod_{j=3}^{d} Q_j. 
\end{equation}
If $d=2$ then $Q_2 = R^2$ and~\eqref{eq:prelim 2} becomes 
$$
\mu\(\sE_B\) \le  N^{o(1)}B^{-2t}  R^{4 - 2t}  .
$$ 
Using $R \ll Q^{1/2} N^{\eta}$ and recalling the definition of $Q$ in~\eqref{eq:def Q} we obtain, 
\begin{equation}
\label{eq:fin d=2}
\mu\(\sE_B\) \le  N^{4- 2t + \eta(4- 2t)+o(1)}   B^{-4}  .
\end{equation}

Now let  $d \ge 3$, then trivially
$$
 \prod_{j=3}^{d} Q_j \le  \prod_{j=3}^{d} Q_j^{d/j}  =\(RQ_2^{-1/2}\)^d.
$$
Hence we derive from~\eqref{eq:prelim 2} that 
$$
\mu\(\sE_B\) \le  N^{o(1)}B^{-dt}  R^{d - dt+1}  Q_2^{3/2-d/2 } \le   N^{o(1)}B^{-dt}  R^{d - dt+1}   .
$$
Since $t \le 1$ we have $d-dt +1 >0$. Therefore, using $R \ll Q^{1/d} N^{\eta}$ we obtain
\begin{equation}
\label{eq:fin d>2}
\begin{split}
\mu\(\sE_B\) & \le  N^{ \eta(2d - dt+1)}B^{-dt}  \(NB^{-1}\)^{d - dt+1}\\
& =  N^{d - dt+1 +  \eta(d - dt+1)}B^{-d-1} .
\end{split}
\end{equation}

Since $\eta$ is arbitrary, from~\eqref{eq:fin d=2} and~\eqref{eq:fin d>2}  we derive the desired result. 
\end{proof}

\section{Proof of the upper bound of Theorem~\ref{thm:Gauss}}

\subsection{Mean values of Gauss sums}
We need the following mean value estimate for Gauss sums with respect to an arbitrary Radon measure, 
which is interesting in its own right.

\begin{lemma}
\label{lem:Lp}
Let $\mu$ be a Radon measure on $\T_2$ such that 
$$
\mu(B(\vx, r))\ll r^{t}
$$
holds for some $t>0$ and for all $\vx\in \T_2$ and $r>0$. Then   for all $M, N$ we have 
$$
\int_{\T_2} \left |G(\vx; M, N) \right |^{6} d\mu(\vx)\le N^{6-2t+o(1)}(M+N)^{ \min\{1, t\}} .
$$
\end{lemma}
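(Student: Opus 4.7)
The plan is to combine the Frostman-type frequency bound of Lemma~\ref{lem:frequency} with a layer-cake decomposition of the sixth moment of $|G(\vx;M,N)|$. Writing $\cE_B = \{\vx\in\T_2 : |G(\vx;M,N)|\geq B\}$, and using the pointwise bound $|G(\vx;M,N)|\le N$, the starting point is
$$
\int_{\T_2}|G(\vx;M,N)|^6\,d\mu(\vx) \;=\; 6\int_0^N B^5 \mu(\cE_B)\, dB.
$$

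Next I would split this integral at the threshold $B_0 = N^{1/2+\varepsilon}$, below which Lemma~\ref{lem:structure of large Gauss} (and hence Lemma~\ref{lem:frequency}) gives no nontrivial information. For $B\le B_0$ I would simply use $\mu(\cE_B)\le \mu(\T_2)\ll 1$, producing a contribution of $O(B_0^6) = O(N^{3+6\varepsilon})$. For $B\in (B_0,N]$ I would perform a dyadic decomposition $B\sim 2^k$ and apply Lemma~\ref{lem:frequency} on each block, using also the identity $\min\{M+N,(M+N)^t\}=(M+N)^{\min\{1,t\}}$. Each block contributes
$$
\int_{2^k}^{2^{k+1}}B^5 \mu(\cE_B)\,dB \;\ll\; 2^{6k}\mu(\cE_{2^k}) \;\le\; N^{6-2t+o(1)}(M+N)^{\min\{1,t\}}.
$$
Since there are only $O(\log N)$ such blocks, their sum is absorbed into the $N^{o(1)}$ factor, giving a total nontrivial contribution of $N^{6-2t+o(1)}(M+N)^{\min\{1,t\}}$, which is exactly the claimed bound.

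The last step is to verify that the trivial small-$B$ contribution $N^{3+6\varepsilon}$ is absorbed by the main bound. For $t\in(0,2]$ one checks $6-2t+\min\{1,t\}\ge 3$, so on taking $\varepsilon$ sufficiently small (before passing to the $o(1)$) the piece from $B\le B_0$ is dominated by the dyadic piece. The one place requiring slight care, and arguably the only technical obstacle, is this boundary check near $t=2$ (where the two contributions meet at $N^{3}$); here the freedom to choose $\varepsilon$ arbitrarily small is essential, as the Frostman exponent is not bounded away from the ambient dimension.
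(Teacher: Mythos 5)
Your proposal is correct and follows essentially the same route as the paper: both arguments reduce the sixth moment to the level sets $\cE_B$ via a dyadic decomposition of $[N^{1/2+\varepsilon},N]$, bound the range $B\le N^{1/2+\varepsilon}$ trivially by $\mu(\T_2)\ll 1$, apply Lemma~\ref{lem:frequency} on the large-$B$ range, and finally use $6-2t+\min\{1,t\}\ge 3$ together with the arbitrariness of $\varepsilon$ to absorb the trivial piece. Your layer-cake formulation versus the paper's pigeonhole choice of a single dominant dyadic level $B$ is only a cosmetic difference.
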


\begin{proof} Let us fix some $\varepsilon>0$.  Denote 
$$
K=N^{1/2+\varepsilon}.
$$
By a dyadic partition argument, there exits $B\in [K, N]$ such that
\begin{equation}
 \label{eq:Dyadic split}
 \int_{\T_2} \left |G(\vx; M, N) \right |^{6} d\mu(\vx) \le K^6 \mu(\T_2) +  B^6\mu(\cE_B) N^{o(1)} .
\end{equation} 
By Lemma~\ref{lem:frequency} we have 
$$
\mu(\cE_B)\le N^{6-2t+o(1)}B^{-6} (M+N)^{ \min\{1, t\}}.
$$
which after substitution in~\eqref{eq:Dyadic split} implies 
$$
 \int_{\T_2} \left |G(\vx; M, N) \right |^{6} d\mu(\vx) \le N^{3 + 6 \varepsilon}   + N^{6-2t+o(1)}(M+N)^{ \min\{1, t\}} .
 $$
Since  $t\le 2$ and $\varepsilon$ is arbitrary, the result now follows.  
\end{proof}

\subsection{Concluding the proof} 
We now turn to the proof of the upper bound of Theorem~\ref{thm:Gauss}. Let $t\in (0, \dim \cE_{2,\alpha})$. Then $\cE_{2, \alpha}$ has infinite $\cH^{t}$-measure. By Lemma~\ref{lem:Frostman}, there exists a Radon measure $\mu$ on $\T_2$ with 
$$
\mu(\cE_{2, \alpha})>0 \quad \text{ and } \quad  \mu(B(\vx, r))\ll r^{t}  
$$
for all $\vx\in \T_2$ and $r>0$.  Taking the function 
$$
f_n(x_1, x_2)=\e(x_1n+x_2n^2)
$$
and applying Lemmas~\ref{lem:G-K for our setting} and~\ref{lem:Lp}, we immediately derive that for  
almost all $(x_1, x_2)\in \T_2$ with respect to $\mu$,  
\begin{equation}
\label{eq:point-wise}
|G(x_1, x_2; N)|\le N^{1-t/3+\min\{1/6, t/6\}+o(1)}.
\end{equation}
Since $\mu(\cE_{2, \alpha})>0$,  there is a set of $(x_1, x_2) \in \T_2$  of positive $\mu$-measure 
such that 
$$
|G(x_1, x_2; N)|\ge N^{\alpha}
$$
for infinitely many $N\in \N$. Combining with~\eqref{eq:point-wise} we derive 
$$
\alpha\le 1-t/3+\min\{1/6, t/6\},
$$
which implies 
$$
t\le \min\{1/2+3(1-\alpha), 6(1-\alpha)\}.
$$
Since this holds for any $t<\dim \cE_{2, \alpha}$, we conclude that 
$$
\dim \cE_{2, \alpha} \le \min\{1/2+3(1-\alpha), 6(1-\alpha)\},
$$
which yields the desired upper bound.

\section{Proof of the lower bound of   Theorem~\ref{thm:Gauss}} 

\subsection{Large values of Gauss sums}
The main purpose of this subsection is to show  Lemma~\ref{lem:continuity 2}.  We start from  recalling the following property of Gaussian sums, 
see~\cite[Equation~(1.55)]{IwKow}.

\begin{lemma}
\label{lem:Gauss}
Let $p\ge 3$ and $a, b\in \Z_p$ with $b\neq 0$, then 
$$
\left|\sum_{n=0}^{p-1}\ep\left(an+bn^{2}\right)\right|=\sqrt{p}.
$$
\end{lemma}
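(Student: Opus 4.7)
\smallskip

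\textbf{Proof plan for Lemma~\ref{lem:Gauss}.}

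The plan is the classical ``complete the square and square the sum'' argument for quadratic Gauss sums. Since $p \ge 3$ is an (odd) prime and $b \not\equiv 0 \pmod p$, both $b$ and $2$ are invertible in $\Z_p$, so $2b$ has a multiplicative inverse, which we denote $(2b)^{-1}$.

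First I would complete the square inside the exponential: writing
$$
an + bn^{2} \equiv b\bigl(n + (2b)^{-1}a\bigr)^{2} - (4b)^{-1}a^{2} \pmod p,
$$
one obtains
$$
\sum_{n=0}^{p-1}\ep\bigl(an+bn^{2}\bigr)
= \ep\bigl(-(4b)^{-1}a^{2}\bigr)\sum_{n=0}^{p-1}\ep\bigl(b(n+(2b)^{-1}a)^{2}\bigr).
$$
As $n$ runs over a complete residue system mod $p$, so does $n + (2b)^{-1}a$, so the inner sum equals $S(b):=\sum_{m=0}^{p-1}\ep(bm^{2})$. Since $|\ep(-(4b)^{-1}a^{2})|=1$, it suffices to show $|S(b)|=\sqrt{p}$.

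Next I would compute $|S(b)|^{2}$ directly. Expanding gives
$$
|S(b)|^{2} = \sum_{m=0}^{p-1}\sum_{n=0}^{p-1}\ep\bigl(b(m^{2}-n^{2})\bigr).
$$
Substituting $m = n+h$ with $h$ ranging over $\Z_p$ and using $m^{2}-n^{2} = 2nh + h^{2}$, this becomes
$$
|S(b)|^{2} = \sum_{h=0}^{p-1}\ep(bh^{2})\sum_{n=0}^{p-1}\ep(2bhn).
$$
By orthogonality of additive characters on $\Z_p$, the inner sum equals $p$ when $2bh\equiv 0\pmod p$ and $0$ otherwise. Since $p$ is odd and $b$ is invertible, this forces $h\equiv 0\pmod p$, so only the term $h=0$ survives and contributes $p$. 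Hence $|S(b)|^{2} = p$, which gives $|S(b)| = \sqrt{p}$ and completes the proof.

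There is no serious obstacle here: the only point requiring care is the invertibility of $2b$ in $\Z_p$, which is exactly guaranteed by the hypotheses $p \ge 3$ (odd prime) and $b \neq 0$. The argument would break for $p=2$ (where one cannot complete the square) but that case is explicitly excluded.
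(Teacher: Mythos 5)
Your proof is correct. Note that the paper does not actually prove Lemma~\ref{lem:Gauss}: it simply quotes the evaluation of the quadratic Gauss sum from~\cite[Equation~(1.55)]{IwKow}. Your argument --- completing the square to reduce to $S(b)=\sum_{m=0}^{p-1}\ep(bm^{2})$ and then computing $|S(b)|^{2}$ by the substitution $m=n+h$ and orthogonality --- is the standard self-contained derivation of exactly that cited fact, and every step checks out: the identity $an+bn^{2}\equiv b(n+(2b)^{-1}a)^{2}-(4b)^{-1}a^{2}\pmod p$ is valid because $2b$ is invertible for an odd prime $p$ and $b\neq 0$, the shift $n\mapsto n+(2b)^{-1}a$ permutes $\Z_p$, and in the squared sum only the diagonal term $h=0$ survives, giving $|S(b)|^{2}=p$. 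So you have supplied an elementary proof where the paper relies on a reference; there is no gap.
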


Using the Gauss bound together with the standard 
completion technique, see~\cite[Sections~11.11 and~12.2]{IwKow} we also immediately obtain: 

\begin{lemma}
\label{lem:incompleteMonom}
For any  prime  $p$ and any $a \in \F_p\setminus\{0\}$ we have 
$$
\max_{1\le M, N\le p}\left|\sum_{M+1\le n\le M+N}\ep\left(an^{2}\right)\right| \ll \sqrt{p} \log p.
$$
\end{lemma}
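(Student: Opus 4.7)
\medskip

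\noindent\textbf{Proof plan for Lemma~\ref{lem:incompleteMonom}.} The plan is to apply the standard \emph{completion technique}: reduce the incomplete sum to a linear combination of complete Gauss sums (to which Lemma~\ref{lem:Gauss} applies), and absorb the cost of completion into a harmonic-sum factor of size $\log p$.

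First, I would express the indicator function of the interval $\{M+1, \ldots, M+N\}$ modulo $p$ via the orthogonality relation
$$
\mathbf{1}_{\{M+1 \le n \le M+N\}}(n)
= \frac{1}{p}\sum_{h=0}^{p-1}\,\sum_{m=M+1}^{M+N}\ep\bigl(h(n-m)\bigr),
$$
valid for $1 \le n \le p$ (and of course $N\le p$ so no element of the interval is counted more than once modulo $p$). Substituting into the target sum and swapping the order of summation yields
$$
\sum_{M+1\le n\le M+N}\ep\bigl(an^{2}\bigr)
= \frac{1}{p}\sum_{h=0}^{p-1}
 \Bigl(\sum_{m=M+1}^{M+N}\ep(-hm)\Bigr)
 \Bigl(\sum_{n=0}^{p-1}\ep\bigl(an^{2}+hn\bigr)\Bigr).
$$

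Next, I would bound each factor separately. Since $a\neq 0$ in $\F_p$, the inner complete sum is a quadratic Gauss sum with nonzero leading coefficient, so Lemma~\ref{lem:Gauss} gives $\bigl|\sum_{n=0}^{p-1}\ep(an^{2}+hn)\bigr|=\sqrt{p}$ for every $h$. The geometric sum in $m$ equals $N$ when $h=0$, and otherwise is bounded by
$$
\Bigl|\sum_{m=M+1}^{M+N}\ep(-hm)\Bigr|
\le \min\!\left(N,\; \frac{1}{2\|h/p\|}\right)
\ll \min\!\left(N,\; \frac{p}{\|h\|_{p}}\right),
$$
where $\|h\|_{p}=\min(h,p-h)$ denotes the distance from $h$ to the nearest multiple of $p$.

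Combining these estimates, I would conclude
$$
\Bigl|\sum_{M+1\le n\le M+N}\ep(an^{2})\Bigr|
\ll \frac{\sqrt{p}}{p}\sum_{h=0}^{p-1}\min\!\left(N,\frac{p}{\|h\|_{p}}\right)
\ll \frac{\sqrt{p}}{p}\cdot p\log p
= \sqrt{p}\,\log p,
$$
after splitting the $h$-sum into $h=0$ (contributing $N\le p$) and $1\le h\le p-1$, for which the standard harmonic estimate $\sum_{1\le h\le p-1} p/\|h\|_{p}\ll p\log p$ applies.

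There is no serious obstacle here: the argument is entirely routine once the Gauss bound $\sqrt{p}$ is in hand. The only minor point to be careful about is the geometric-sum estimate uniformly in $h$, and the handling of $h=0$, but both are absorbed into the $\log p$ factor.
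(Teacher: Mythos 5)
Your proof is correct and follows essentially the same route as the paper: the paper simply invokes ``the Gauss bound together with the standard completion technique'' (citing Iwaniec--Kowalski) without writing out details, and your argument is exactly that technique carried out explicitly, with the complete sums handled by Lemma~\ref{lem:Gauss} and the completion cost giving the $\log p$ factor.
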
 

The continuity of Gauss sums yields the following result.

\begin{lemma}
\label{lem:continuity}
For $N\gg p\log p$ we have 
\begin{align*}
|G(x_1, x_2; N)&-G(a/p, b/p; N)|\\
& \ll Np^{-1/2}(|x_1-a/p|N+|x_2-b/p|N^{2}).
\end{align*}
\end{lemma}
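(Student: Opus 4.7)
The plan is to reduce the difference $G(x_1,x_2;N) - G(a/p,b/p;N)$ to a sum of a slowly varying weight against the Gauss-type kernel $e_p(an+bn^2)$, and then extract the $p^{-1/2}$ savings via Abel summation against the incomplete Gauss sum bound. Writing $\delta_1 = x_1 - a/p$ and $\delta_2 = x_2 - b/p$, one has the pointwise factorisation
$$
e(x_1 n + x_2 n^2) - e\bigl((a/p)n + (b/p)n^2\bigr) = e_p(an+bn^2)\bigl(e(\delta_1 n + \delta_2 n^2)-1\bigr),
$$
and hence
$$
G(x_1,x_2;N) - G(a/p,b/p;N) = \sum_{n=1}^{N} e_p(an+bn^2)\, w(n), \qquad w(t) := e(\delta_1 t + \delta_2 t^2)-1.
$$
The naive triangle-inequality approach, $|w(n)| \ll |\delta_1| n + |\delta_2| n^2$, would only yield $|\delta_1| N^2 + |\delta_2| N^3$ with no $p^{-1/2}$ factor; so the cancellation from the Gauss kernel must be exploited.

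Next I would put $A(n) = \sum_{m=1}^{n} e_p(am + bm^2)$. Completing the square in $m$ (which is legitimate for $\gcd(b,p)=1$ and $p$ odd, the hypotheses implicit in the surrounding context) reduces this to a pure quadratic incomplete exponential sum, and Lemma~\ref{lem:incompleteMonom} gives the uniform bound $|A(n)| \ll \sqrt{p}\log p$ for all $1 \le n \le N$. Meanwhile $w$ is slowly varying: from $|w'(t)| \ll |\delta_1| + |\delta_2| t$ one gets $|w(N)| \ll |\delta_1| N + |\delta_2| N^2$ and, integrating, the total variation on $[0,N]$ is of the same order. Partial summation then produces
$$
\left|\sum_{n=1}^{N} e_p(an+bn^2)\, w(n)\right| \ll \sup_{n \le N}|A(n)|\cdot \bigl(|w(N)| + \mathrm{Var}_{[0,N]}w\bigr) \ll \sqrt{p}\,\log p\,\bigl(|\delta_1| N + |\delta_2| N^2\bigr).
$$

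Finally I would invoke the hypothesis $N \gg p\log p$, which is exactly designed to convert the $\sqrt{p}\log p$ prefactor into the form $Np^{-1/2}$ stated in the lemma: one has $\sqrt{p}\log p = (p\log p)/\sqrt{p} \ll N/\sqrt{p}$, and inserting this into the bound of the previous paragraph gives the claimed estimate. The only subtle step in the whole argument is the reduction to Lemma~\ref{lem:incompleteMonom}: its statement is for the pure monomial $e_p(an^2)$, while here one needs the analogous bound for $e_p(an+bn^2)$; handling this by completing the square is routine but is the one place where an implicit coprimality hypothesis on $b$ is being used. Everything else is an elementary application of Abel summation and the mean value inequality $|e(u)-1| \le 2\pi|u|$.
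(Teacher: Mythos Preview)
Your overall strategy matches the paper's exactly: factor out the Gauss kernel, then apply Abel summation against a bound on its partial sums. But your key intermediate claim, that $|A(n)| \ll \sqrt{p}\log p$ uniformly for all $1 \le n \le N$, is false. Lemma~\ref{lem:incompleteMonom} only applies to intervals of length at most $p$, and the complete Gauss sum over one full period has modulus exactly $\sqrt{p}$ (Lemma~\ref{lem:Gauss}), not zero. Thus for $n = kp$ one has $|A(kp)| = k\sqrt{p}$, which is unbounded as $k$ grows; since $N \gg p\log p$, values $n \le N$ with $n/p \gg \log p$ certainly occur, so your uniform bound cannot hold.

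The paper handles this by splitting $A(n)$ into $O(n/p)$ complete periods plus at most one incomplete tail, obtaining the correct bound
\[
\max_{1\le n\le N}|A(n)| \ll Np^{-1/2} + \sqrt{p}\log p \ll Np^{-1/2}
\]
once $N \gg p\log p$. Plugging this into Abel summation gives the stated estimate directly, and your final conversion step $\sqrt{p}\log p \ll Np^{-1/2}$ becomes redundant. So the repair is routine and the conclusion survives, but as written your argument has a real gap at precisely the place where the hypothesis $N \gg p\log p$ is genuinely used.
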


\begin{proof}
We use the following version of summation by parts.
Let $a_n$ be a sequence and for each $t\ge 1$ denote   
$$
A(t)=\sum_{1\le n\le t} a_n.
$$
Let $\psi: [1, N]\rightarrow \C$ be a continuously differentiable function. Then 
$$
\sum_{n=1}^{N} a_n \psi(n)=A(N)\psi(N)-\int_{1}^{N} A(t)\psi'(t)dt.
$$
Let $\delta_1=x-a/p$, $\delta_2=x_2-b/p$. Then define 
\begin{equation}
\label{eq:Delta=G-G}
\begin{split}
\Delta&=G(x_1, x_2; N)-G(a/p, b/p; N)\\
&=\sum_{n=1}^{N}\e(na/p+n^2b/p) \left (\e(\delta_1n+\delta_2n^{2})-1 \right ). 
\end{split}
\end{equation}

For an integer $M$ with $1\le M\le N$, we split the sum $G(a/p, b/p;M)$ into $O(N/p)$ complete sums and at most one 
incomplete sum,  applying  Lemmas~\ref{lem:Gauss} and~\ref{lem:incompleteMonom}, we derive
$$
\max_{1\le M\le N} |G(a/p, b/p;M)| \ll Np^{-1/2}+p^{1/2}\log p.
$$  
Hence,  applying to the sum in~\eqref{eq:Delta=G-G} summation by parts with $a_n=\e(na/p+n^2b/p) $  and $\psi(t)=\e(\delta_1t+\delta_2t^{2})-1$, we derive 
 that 
\begin{align*}
\Delta&\ll \max_{1\le M\le N} |G(a/p, b/p;M)| \(|\delta_1|N+|\delta_2| N^{2}\)\\
&\ll \(Np^{-1/2}+p^{1/2}\log p\)\(|\delta_1|N+|\delta_2| N^{2}\)\\
&\ll Np^{-1/2}\(|\delta_1|N+|\delta_2| N^{2}\),
\end{align*}
which finishes the proof.
\end{proof}

From Lemma~\ref{lem:Gauss} and Lemma~\ref{lem:continuity} we  obtain the following, which is the main purpose  of this subsection.

\begin{lemma}
\label{lem:continuity 2}
We fix $\alpha\in (1/2, 1)$. Let $p\ge 3$ and $a, b\in \Z_p$ with $b\neq 0$. Let $N$ be the smallest  number such that  $p| N$ and 
$$
N\ge p^{\frac{1}{2(1-\alpha)}}.
$$
Then there exists a sufficiently small number $\eta>0$ such that for any $(x_1, x_2)\in \T_2$ with 
$$
\left |x_1-\frac{a}{p}\right |<\eta p^{-\frac{1}{2(1-\alpha)}} \mand \left |x_2-\frac{b}{p}\right |<\eta p^{-\frac{1}{1-\alpha}}
$$
we have 
$$
G(x_1, x_2; N)\gg N^{\alpha}.
$$
\end{lemma}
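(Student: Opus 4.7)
The plan is to compute the Gauss sum exactly at the rational point $(a/p, b/p)$ and then use the continuity bound from Lemma~\ref{lem:continuity} to conclude that nearby points retain most of the mass.

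\textbf{Step 1 (Exact evaluation at the rational point).} Since $p \mid N$, write $N = Kp$ with $K = N/p \in \N$. The summand $\e\bigl((an + bn^2)/p\bigr)$ depends only on $n \bmod p$, so
$$
G(a/p, b/p; N) = K \sum_{n=1}^{p} \e\bigl((an + bn^2)/p\bigr).
$$
Since $b \not\equiv 0 \pmod p$, Lemma~\ref{lem:Gauss} gives $\bigl|G(a/p, b/p; N)\bigr| = K\sqrt{p} = N/\sqrt{p}$.

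\textbf{Step 2 (Size of $N$).} By the defining property of $N$, we have $N \ge p^{1/(2(1-\alpha))}$, which is equivalent to $N/\sqrt{p} \ge N^{\alpha}$. Since $N$ is the \emph{smallest} such multiple of $p$, we also have $N \le p^{1/(2(1-\alpha))} + p$, and as $1/(2(1-\alpha)) > 1$ this gives $N \asymp p^{1/(2(1-\alpha))}$ for large $p$. In particular $N \gg p \log p$, so the hypothesis of Lemma~\ref{lem:continuity} is satisfied. From $N \asymp p^{1/(2(1-\alpha))}$ we also get $N^2 \asymp p^{1/(1-\alpha)}$, and the main term satisfies $N/\sqrt{p} \le 2 N^\alpha$.

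\textbf{Step 3 (Perturbation bound).} Lemma~\ref{lem:continuity} yields
$$
\bigl|G(x_1, x_2; N) - G(a/p, b/p; N)\bigr| \ll N p^{-1/2}\bigl(|x_1 - a/p|\, N + |x_2 - b/p|\, N^2\bigr).
$$
Under the assumed inequalities $|x_1 - a/p| < \eta p^{-1/(2(1-\alpha))}$ and $|x_2 - b/p| < \eta p^{-1/(1-\alpha)}$, the bracketed factors become
$$
|x_1 - a/p|\, N \ll \eta \quad \text{and}\quad |x_2 - b/p|\, N^2 \ll \eta,
$$
by the size estimates in Step~2. Hence the total difference is at most a constant times $\eta N/\sqrt{p}$, which in turn is at most a constant times $\eta N^\alpha$.

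\textbf{Step 4 (Reverse triangle inequality).} Combining Steps 1 and 3,
$$
|G(x_1, x_2; N)| \ge N/\sqrt{p} - C\eta\, N^\alpha \ge (1 - C\eta)\, N^\alpha,
$$
so choosing $\eta$ sufficiently small (e.g.\ $\eta = 1/(2C)$) gives $|G(x_1, x_2; N)| \gg N^\alpha$, as required.

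The argument is essentially a perturbation estimate, so there is no real obstacle; the only mild subtlety is ensuring that the rounding $N \le p^{1/(2(1-\alpha))} + p$ does not destroy the asymptotic $N \asymp p^{1/(2(1-\alpha))}$ needed to match the two scales of approximation in $x_1$ and $x_2$ simultaneously. This is what makes both coordinates contribute the same order $\eta N^\alpha$ of error and lets a single small $\eta$ work.
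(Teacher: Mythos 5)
Your proof is correct and follows essentially the same route as the paper: evaluate $G(a/p,b/p;N)$ exactly via periodicity and Lemma~\ref{lem:Gauss}, note that minimality of $N$ gives $N\approx p^{1/(2(1-\alpha))}$ and $Np^{-1/2}\approx N^{\alpha}$, then apply the continuity bound of Lemma~\ref{lem:continuity} and the triangle inequality with $\eta$ small. Your write-up merely makes explicit a few points the paper leaves implicit (the bound $N\le p^{1/(2(1-\alpha))}+p$ and the verification of the hypothesis $N\gg p\log p$), which is fine.
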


\begin{proof}
Recalling that   $z\approx Z$ means  $Z/C\le z\le CZ$ for some absolute positive constant $C$. 
First note that the choice of $N$ implies  
$$
N\approx p^\frac{1}{2(1-\alpha)}\mand Np^{-1/2}\approx N^{\alpha}.
$$   
By  Lemma~\ref{lem:continuity} we have 
$$
|G(x_1, x_2; N)-G(a/p, b/p; N)|\ll \eta N^{\alpha}.
$$
Lemma~\ref{lem:Gauss} implies $G(a/p, b/p; N)\approx N^{\alpha}$. Therefore, we obtain the desired bound by choosing 
a  sufficiently small $\eta$.
\end{proof}

\subsection{Simultaneous Diophantine approximations}
Let $\btheta=(\vartheta_1, \ldots, \vartheta_d)$ be a  vector of positive real numbers and let $\fQ$ an arbitrary set of positive 
integers.  Without losing generality, assuming that $\vartheta_1\le \ldots \le \vartheta_d$. We denote by  $W_{\fQ, \btheta}$ be the  collection of points $(x_1, \ldots, x_d)\in \T_d$ for which there are infinitely many  $q\in \fQ$ such that 
$$
\left \|qx_i \right \|<q^{-\vartheta_i}, \qquad i=1, \ldots, d,
$$
where $\left \| x \right \|=\min\{|x-n|:~n\in \Z\}$.    Denote
$$
 \nu(\fQ) = \inf\left\{\nu \in \R:~ \sum_{q\in \fQ} q^{-\nu} < \infty\right\}. 
$$
We  need the following result of  Rynne~\cite[Theorem~1]{Rynne}. 

\begin{lemma}
\label{lem:Rynne}
Suppose that 
$
\vartheta_1+\ldots +\vartheta_d \ge \nu(\fQ),
$
then we have
$$
\dim W_{\fQ, \btheta}=\min_{1\le j\le d}\frac{d+\nu(\fQ) +j\vartheta_j-\sum_{i=1}^{j}\vartheta_i}{1+\vartheta_j}.
$$
\end{lemma}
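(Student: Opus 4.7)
The plan is to establish matching upper and lower bounds on $\dim W_{\fQ,\btheta}$ separately, via a covering argument for the upper bound and a Mass Transference Principle for the lower bound.

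\textbf{Upper bound.} For each $q \in \fQ$ and each $\mathbf{p} \in \{0,\ldots,q-1\}^d$, let $B(q,\mathbf{p}) \subset \T_d$ be the axis-parallel box centred at $\mathbf{p}/q$ with side lengths $2q^{-1-\vartheta_i}$, $i=1,\ldots,d$, so that
$$
W_{\fQ,\btheta} \subseteq \bigcap_{Q_0 \ge 1} \bigcup_{\substack{q \in \fQ \\ q \ge Q_0}} \bigcup_{\mathbf{p}} B(q,\mathbf{p}).
$$
Fix $j \in \{1,\ldots,d\}$. Since $\vartheta_1 \le \cdots \le \vartheta_d$, the box $B(q,\mathbf{p})$ can be covered by
$$
\prod_{i=1}^{j}\lceil q^{\vartheta_j - \vartheta_i}\rceil \ll q^{\,j\vartheta_j - \sum_{i=1}^{j}\vartheta_i}
$$
cubes of side $q^{-1-\vartheta_j}$ (subdivision is only needed in the first $j$ coordinates, the remaining sides being already shorter than the cube side). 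Summing the $s$-th powers of diameters over all $q \in \fQ$ and all $\mathbf{p}$ produces
$$
\sum_{q \in \fQ} q^d \cdot q^{\,j\vartheta_j - \sum_{i=1}^{j}\vartheta_i} \cdot q^{-s(1+\vartheta_j)},
$$
which converges by the definition of $\nu(\fQ)$ whenever $s > (d + \nu(\fQ) + j\vartheta_j - \sum_{i=1}^{j}\vartheta_i)/(1+\vartheta_j)$. Taking the infimum over $j$ yields the desired upper bound.

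\textbf{Lower bound.} The hypothesis $\vartheta_1 + \cdots + \vartheta_d \ge \nu(\fQ)$ ensures the divergence of $\sum_{q \in \fQ} q^{-(\vartheta_1+\cdots+\vartheta_d)}$, so a quasi-independence argument of Borel--Cantelli / Khintchine--Groshev type shows that the $\limsup$ of the boxes $B(q,\mathbf{p})$ over $q \in \fQ$ has full Lebesgue measure in $\T_d$. The Mass Transference Principle for rectangles (Beresnevich--Velani, in the form of Wang--Wang--Wu) then converts this full-measure statement into a Hausdorff dimension lower bound for $W_{\fQ,\btheta}$. Applying the principle at each scale $q^{-1-\vartheta_j}$, $j=1,\ldots,d$, reproduces the quantity $(d + \nu(\fQ) + j\vartheta_j - \sum_{i=1}^{j}\vartheta_i)/(1+\vartheta_j)$, and the true dimension is the infimum of these bounds.

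\textbf{Main obstacle.} The covering argument for the upper bound is essentially bookkeeping; the heart of the proof is the lower bound. The delicate point is verifying a ubiquity (or quasi-independence) property for the family $\{\mathbf{p}/q : q \in \fQ, \ \mathbf{p} \in \{0,\ldots,q-1\}^d\}$ at each relevant scale simultaneously, which is exactly where the assumption $\sum \vartheta_i \ge \nu(\fQ)$ is used in an essential way. Equally subtle is showing that the $\min_j$ is genuinely attained: one must confirm that intermediate values of $j$ are not subsumed by the extreme cases $j=1$ or $j=d$, which requires a careful analysis of how the multi-scale rectangular geometry of $B(q,\mathbf{p})$ interacts with the MTP. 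This multi-scale nature is what distinguishes Rynne's result from the isotropic Jarn\'ik--Besicovitch framework.
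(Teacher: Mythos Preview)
The paper does not prove this lemma at all: it is quoted as Theorem~1 of Rynne~\cite{Rynne} and used as a black box, so there is no in-paper proof to compare against.

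Your upper bound via the covering/slicing argument is standard and essentially correct. The lower-bound sketch, however, contains a genuine error. You claim that the hypothesis $\vartheta_1+\cdots+\vartheta_d \ge \nu(\fQ)$ ``ensures the divergence of $\sum_{q \in \fQ} q^{-(\vartheta_1+\cdots+\vartheta_d)}$''. This is backwards: $\nu(\fQ)$ is the abscissa of convergence, so an exponent at least $\nu(\fQ)$ makes the series converge (or sit on the boundary), not diverge. Indeed, the hypothesis $\sum_i \vartheta_i \ge \nu(\fQ)$ is exactly what places us in the \emph{convergence} case of the associated Khintchine--Groshev problem, so the $\limsup$ of the boxes $B(q,\mathbf{p})$ has Lebesgue measure zero, not full measure --- which is precisely why a nontrivial Hausdorff-dimension formula is meaningful. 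Your quasi-independence/full-measure step therefore fails as written. A Mass Transference argument would instead start from full measure for a family of \emph{enlarged} rectangles (with smaller exponents $\vartheta_i'$ satisfying $\sum_i \vartheta_i' \le \nu(\fQ)$) and then transfer down to the original rectangles; your sketch collapses these two distinct scales.

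As a historical aside, Rynne's 1998 paper predates the Mass Transference Principle (Beresnevich--Velani 2006) and its rectangular variants (Wang--Wu et al., much later still), so his original argument proceeds by a direct Cantor-set construction with a suitable mass distribution. A corrected MTP route is a plausible modern alternative, but the version you have written does not go through.
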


For $d=2$ and $ \nu(\fQ)  = 1$ we have the following.
$$
\dim W_{\fQ, \btheta}=\min\left\{ \frac{3}{1+\vartheta_1}, \frac{3+\vartheta_2-\vartheta_1}{1+\vartheta_2}\right \}.
$$

We now turn to the proof of the lower bound of   Theorem~\ref{thm:Gauss}. Indeed this follows by combining Lemma~\ref{lem:continuity 2} and Lemma~\ref{lem:Rynne}. Let  $\fQ$ be the collection of prime numbers. Clearly we have $\nu(\fQ)=1$. By Lemma~\ref{lem:continuity 2}, for any $\varepsilon>0$ we obtain 
$$
W_{\fQ, \vartheta} \subseteq \cE_{2, \alpha-\varepsilon}
$$
with 
$$
\vartheta_1=\frac{1}{2(1-\alpha)}-1 \mand  \vartheta_2=\frac{1}{1-\alpha}-1.
$$
Since $\alpha\in (1/2, 1)$, we have $\vartheta_1+\vartheta_2\ge 1$. Thus by Lemma~\ref{lem:Rynne} we obtain 
$$
\dim W_{\fQ, \vartheta}=\min\{1/2+3(1-\alpha), 6(1-\alpha)\}.
$$
It follows that 
$$
\dim \cE_{2, \alpha-\varepsilon}\ge \min\{1/2+3(1-\alpha), 6(1-\alpha)\}.
$$
Since  $\varepsilon>0$  is arbitrary,  we obtain the desired bound.

\section{Proof of Theorem~\ref{thm:Weyl}}

\subsection{Mean values of Weyl sums}
We  need the following mean value estimate of Weyl sums with respect to a general measure. 

\begin{lemma}
\label{lem:Lp-d>2}
Let $\mu$ be a Radon measure on $\T_d, d\ge 3$ such that 
$$
\mu(B(\vx, r))\ll r^{t}
$$
holds for some $t>0$ and for all $\vx\in \Tor$ and $r>0$. Then for any integer $h$ with $1\le h\le d$ we have 
\begin{align*}
\int_{\T_d}  |\sfS_{d}(\vx; M, N) |^{d^2+1}& d\mu(\vx)\le N^{(1-1/D)(d^2+1)+o(1)}\\
&+N^{d^2+1-dt+o(1)}(M+N)^{(d-h)t+h(h-1)/2},
\end{align*}
 where $D$ is given by~\eqref{eq:D}. 
\end{lemma}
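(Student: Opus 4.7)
The plan is to mimic exactly the proof of Lemma~\ref{lem:Lp} in the Gauss sum setting (with $\sfS_d$ replacing $G$), but using the Weyl-type structural input of Lemma~\ref{lem:frequency-d>2} in place of Lemma~\ref{lem:frequency}. The trivial cutoff threshold shifts from $N^{1/2+\varepsilon}$ to $N^{1-1/D+\varepsilon}$, which is precisely the threshold beyond which Lemma~\ref{lem:frequency-d>2} is applicable.

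Concretely, I would fix a small $\varepsilon>0$ and set
$$
K=N^{1-1/D+\varepsilon}.
$$
The contribution to $\int_{\T_d} |\sfS_d(\vx;M,N)|^{d^2+1}d\mu(\vx)$ from the set $\{\vx:\ |\sfS_d(\vx;M,N)|\le K\}$ is bounded trivially by $K^{d^2+1}\mu(\T_d)\ll N^{(1-1/D)(d^2+1)+(d^2+1)\varepsilon}$, which after absorbing $(d^2+1)\varepsilon$ into $o(1)$ gives the first term in the claimed bound. For the remaining range $K<|\sfS_d(\vx;M,N)|\le N$, a dyadic partition yields some $B\in[K,N]$ for which
$$
\int_{\T_d} |\sfS_d(\vx;M,N)|^{d^2+1}\mathbf{1}_{\{|\sfS_d|>K\}}\,d\mu(\vx)\le N^{o(1)}B^{d^2+1}\mu(\cE_B),
$$
where $\cE_B$ is defined as in Lemma~\ref{lem:frequency-d>2}.

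The crucial step is then to apply Lemma~\ref{lem:frequency-d>2} with the chosen $h\in\{1,\ldots,d\}$: this yields
$$
\mu(\cE_B)\le B^{-d^2-1}N^{d^2+1-dt+o(1)}(M+N)^{(d-h)t+h(h-1)/2}.
$$
The factor $B^{-d^2-1}$ cancels the $B^{d^2+1}$ from the dyadic bound exactly, so the contribution is bounded by the second term in the statement. Adding the two pieces, and observing that $\varepsilon>0$ was arbitrary and thus may be absorbed into the $N^{o(1)}$ factor, completes the argument.

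I do not anticipate any real obstacle here: the structural decomposition from Lemma~\ref{lem:Struct Large Weyl}, the counting estimate of Lemma~\ref{lem:frequency-d>2}, and the simple dyadic split are all already in place. The only point to watch is that the exponent $d^2+1$ is chosen precisely so that $B^{d^2+1}$ cancels the $B^{-d^2-1}$ coming from Lemma~\ref{lem:frequency-d>2}, which is why the moment order is $d^2+1$ rather than anything else. Everything else is routine bookkeeping of the error terms.
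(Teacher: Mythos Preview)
Your proposal is correct and follows essentially the same argument as the paper: set the threshold $K=N^{1-1/D+\varepsilon}$, bound the small-sum contribution trivially by $K^{d^2+1}$, dyadically decompose the large-sum range to pick out a level $B\in[K,N]$, and then invoke Lemma~\ref{lem:frequency-d>2} so that the $B^{-d^2-1}$ factor exactly cancels $B^{d^2+1}$. The paper's proof is identical in structure and in every substantive step.
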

\begin{proof}
Let us fix some $\varepsilon>0$. Denote 
$$
K=N^{1-1/D+\varepsilon}.
$$
Similar to the proof of Lemma~\ref{lem:Lp}, taking a dyadic partition of the interval $[K, N]$, there exists a number $B\in [K, N]$  such that 
\begin{align*}
&\int_{\T_d} |\sfS_{d}(\vx; M, N)|^{d^2+1}d\mu(\vx)\\
&\qquad \le K^{d^2+1}+N^{o(1)}B^{d^2+1}\mu(\{\vx\in \T_d: B\le \sfS_{d}(\vx; M, N)|\le 2 B\}).
\end{align*}
Combining with Lemma~\ref{lem:frequency-d>2} we obtain 
\begin{align*}
&\int_{\T_d} |\sfS_{d}(\vx; M, N)|^{d^2+1}d\mu(\vx)\\
&\qquad \le  N^{(1-1/D)(d^2+1)+\varepsilon(d^2+1)}+N^{o(1)}N^{d^2+1-dt}(M+N)^{(d-h)t+h(h-1)/2}.
\end{align*}
By the arbitrary choice of  $\varepsilon>0$ we obtain the desired bound.  
\end{proof}

\subsection{Concluding the proof} 
We  now turn to the proof of  Theorem~\ref{thm:Weyl}. Let $t\in (0, \dim \cE_{d, \alpha})$. Then we see that $\cE_{d,\alpha}$ has infinite $\cH^{t}$-measure. By Lemma~\ref{lem:Frostman}, there exists a Radon measure $\mu$ on $\T_d$ with 
$$
\mu(\cE_{2, \alpha})>0 \quad \text{ and } \quad  \mu(B(\vx, r))\ll r^{t}  
$$
for all $\vx\in \T_d$ and $r>0$.  

Let $1\le h\le d$ be an integer.  There are two cases to consider.

\textbf{Case~1.} Suppose that the `total exponent'  
$$
d^2+1-dt + (d-h)t+h(h-1)/2 = d^2+1-ht+h(h-1)/2
$$
in  the second term in the bound of  Lemma~\ref{lem:Lp-d>2} is at least as large as the exponent of the first term, that is,  
$$
d^2+1-ht+h(h-1)/2 \ge (1-1/D)(d^2+1).
$$ 
Since $t< \dim \cE_{d, \alpha} \le d$, we have $d^2+1-dt>1$, and thus  by Lemma~\ref{lem:G-K for our setting} we derive that for almost all $\vx\in \T_d$ with respect to $\mu$,
\begin{equation}
\label{eq:almost all-upper}
|\sfS_{d}(\vx;N)|\le N^{\frac{d^2+1-ht+h(h-1)/2}{d^2+1}+o(1)}.
\end{equation}  
Since $\mu(\cE_{d, \alpha})>0$,   there is  a set of $\vx\in \Tor$  of positive $\mu$-measure such that 
$$
|\sfS_{d}(\vx; N)|\ge N^{\alpha}
$$
for infinitely many $N\in \N$. Combining with~\eqref{eq:almost all-upper} we derive 
\begin{equation}
\label{eq:alpha bound 1}
\alpha\le \frac{d^2+1-ht+h(h-1)/2}{d^2+1},
\end{equation}   
which implies 
\begin{equation}
\label{eq:t<h}
t\le \frac{(d^2+1)(1-\alpha)}{h}+\frac{h-1}{2}.
\end{equation}

\textbf{Case~2.} Suppose  that $d^2+1-th+h(h-1)/2<(1-1/D)(d^2+1)$. Then Lemma~\ref{lem:Lp-d>2} implies that 
$$
|\sfS_{d}(\vx;N)|\le N^{d^2+1-dt+o(1)}(M+N)^{(1-1/D)(d^2+1)-(d^2+1-dt)}.
$$
By Lemma~\ref{lem:G-K for our setting} we conclude that for   almost all $\vx\in \T$ with respect to $\mu$, 
$$
|\sfS_{d}(\vx; N)|\le N^{1-1/D+o(1)}.
$$
Then applying the similar argument to  \textbf{Case 1}, we obtain 
\begin{equation}
\label{eq:alpha bound 2}
\alpha\le 1-1/D,
\end{equation}  
which contradicts our assumption that $\alpha\in (1-1/D,1)$.
Thus we are in   \textbf{Case 1} and we have~\eqref{eq:t<h}  for any integer $1\le h\le d$. 
Since~\eqref{eq:t<h} holds for any $t<\dim \cE_{d, \alpha}$, we obtain the desired upper bound.

\section{Proof of Theorem~\ref{thm:F_f}}

\subsection{One-dimensional mean values of Weyl sums}
For the proof of Theorem~\ref{thm:F_f}, similarly to the proofs of the upper bounds of   
Theorems~\ref{thm:Gauss} and~\ref{thm:Weyl},  we see from Lemma~\ref{lem:Frostman} 
that it is sufficient to prove the following mean value bounds.

We start with quadratic polynomials.

\begin{lemma}
\label{lem:mean-d=2} Let $f\in \R[X]$ be a polynomial of degree $d=2$.
Let $\mu$ be a Radon measure on $\T$ such that 
$$
\mu(B(\vx, r))\ll r^{t}
$$
holds for some $t\in (0, 1)$ and for all $x\in \T$ and $r>0$. Then for all $M, N$ we have 
$$
\int_{\T}   |V_f(x; M, N) |^4 d\mu(x)\le N^{4(1-t/2)+o(1)}.
$$
\end{lemma}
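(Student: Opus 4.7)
The plan is to mimic the proof of Lemma~\ref{lem:Lp} (the corresponding bound for two-variable Gauss sums) but use Lemma~\ref{lem:frequency-f} as the key distributional input. Since $f$ has degree $d=2$, the relevant exponent from~\eqref{eq:D} is $D=\min\{2,4\}=2$, so Lemma~\ref{lem:frequency-f} applies in the range $B\ge N^{1/2+\varepsilon}$ and yields
$$
\mu\bigl(\{x\in\T: |V_f(x;M,N)|\ge B\}\bigr)\le N^{4-2t+o(1)}B^{-4}.
$$
Notice, crucially, that unlike the two-parameter Gauss sum case there is no $(M+N)$ factor here, which is what produces the clean bound $N^{4(1-t/2)+o(1)}$ independent of $M$.

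First I would fix a small $\varepsilon>0$ (with $\varepsilon<(1-t)/4$, which is permissible since $t<1$) and set the threshold $K=N^{1/2+\varepsilon}$. I then split the integral into the two regions $\{x:|V_f(x;M,N)|\le K\}$ and $\{x:|V_f(x;M,N)|>K\}$. On the first region the integrand is bounded by $K^4$, yielding a contribution at most
$$
K^4\mu(\T)\ll N^{2+4\varepsilon}.
$$
On the second region I would perform a dyadic decomposition of the values of $|V_f(x;M,N)|$ over the range $[K,N]$: there exist $O(\log N)$ dyadic levels, and a pigeonhole argument extracts some $B\in[K,N]$ for which
$$
\int_{|V_f(x;M,N)|>K}|V_f(x;M,N)|^{4}\,d\mu(x)\le N^{o(1)}B^{4}\,\mu(\sE_B),
$$
in the notation of Lemma~\ref{lem:frequency-f}.

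Applying Lemma~\ref{lem:frequency-f} with $d=2$ then gives $B^4\mu(\sE_B)\le N^{4-2t+o(1)}$, and combining with the contribution $N^{2+4\varepsilon}$ from the region below $K$ (which is absorbed since $t<1$ and $\varepsilon$ was chosen small), I obtain
$$
\int_{\T}|V_f(x;M,N)|^{4}\,d\mu(x)\le N^{4-2t+o(1)}=N^{4(1-t/2)+o(1)},
$$
as required. The only potentially delicate point is ensuring $K^4$ is dominated by $N^{4-2t}$, which requires $\varepsilon$ to be small relative to $1-t$; this is straightforward but should be stated explicitly, and then one finishes by recalling that $\varepsilon$ can be taken arbitrarily small so the resulting factor is absorbed into the $o(1)$.
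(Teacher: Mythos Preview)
Your proposal is correct and follows essentially the same approach as the paper: set the threshold $K=N^{1/2+\varepsilon}$, split the integral at $K$, dyadically decompose the large part and apply Lemma~\ref{lem:frequency-f} with $d=2$ to bound $B^4\mu(\sE_B)\le N^{4-2t+o(1)}$, then absorb $K^4=N^{2+4\varepsilon}$ using $t<1$. The paper's proof is identical in structure, only stated slightly more tersely.
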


\begin{proof}
Let us fix some $\varepsilon>0$. Denote 
$$
K=N^{1/2+\varepsilon}.
$$
Similar to the proofs of Lemmas~\ref{lem:Lp} and~\ref{lem:Lp-d>2},
  taking a dyadic partition of the interval $[K, N]$, there exists a number $B\in [K, N]$  such that
$$
\int_{\T}   |V_f(x; M, N) |^4  d\mu(x)\le K^{4}+N^{o(1)}B^{4}\mu(\cE_{B}). 
$$ 
Hence by  Lemma~\ref{lem:frequency-f} applied with $d=2$, we have
$$
\int_{\T}   |V_f(x; M, N) |^{4}  d\mu(x) \le K^4 + N^{4-2t+o(1)}.
$$
Since $t \le 1$ and  $\varepsilon$ is arbitrary, the result now follows. 
\end{proof}

For polynomials of higher degree we have a similar bound. 

\begin{lemma}
\label{lem:mean-d>2} Let $f\in \R[X]$ be a polynomial of degree $d\ge 3$.
Let $\mu$ be a Radon measure on $\T$ such that 
$$
\mu(B(\vx, r))\ll r^{t}
$$
holds for some $t\in (0, 1)$ and for all $x\in \T$ and $r>0$. Then   for all $M, N$ we have 
$$
\int_{\T}  |V_f(x; M, N) |^{d+1} d\mu(x)\le N^{d+1  - (d+1)/D+ o(1)}  + N^{d+1- dt+o(1)}, 
$$
where $D$ is given by~\eqref{eq:D}.
\end{lemma}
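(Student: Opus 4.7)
The plan is to follow essentially verbatim the strategy of Lemma~\ref{lem:mean-d=2}, but applying the $d\ge 3$ branch of Lemma~\ref{lem:frequency-f} instead of the $d=2$ branch. The only conceptual point is that, unlike in the quadratic case, the ``trivial'' threshold at which we stop applying the large-value bound is dictated by the range of validity of Lemma~\ref{lem:frequency-f}, which in turn forces the threshold $N^{1-1/D+\varepsilon}$ rather than $N^{1/2+\varepsilon}$; this is what produces the first term $N^{d+1-(d+1)/D+o(1)}$ in the claimed bound.

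More concretely, I would fix an arbitrary $\varepsilon>0$ and set
$$
K=N^{1-1/D+\varepsilon}.
$$
I would split the sum according to whether $|V_f(x;M,N)|\le K$ or $|V_f(x;M,N)|>K$. The first part contributes at most $K^{d+1}\mu(\T)\ll K^{d+1}$, and for the second part I would use a standard dyadic decomposition of the interval $[K,N]$, choosing $B\in[K,N]$ so that
$$
\int_{\T}|V_f(x;M,N)|^{d+1}\,d\mu(x)\le K^{d+1}+N^{o(1)}B^{d+1}\mu(\sE_B).
$$
Since $B\ge K=N^{1-1/D+\varepsilon}$, the range-of-validity hypothesis of Lemma~\ref{lem:frequency-f} is satisfied, so the $d\ge 3$ case of that lemma yields
$$
\mu(\sE_B)\le N^{d+1-dt+o(1)}B^{-d-1}.
$$

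Substituting this into the previous display, the $B$-dependence cancels and we obtain
$$
\int_{\T}|V_f(x;M,N)|^{d+1}\,d\mu(x)\le K^{d+1}+N^{d+1-dt+o(1)}=N^{(1-1/D)(d+1)+\varepsilon(d+1)}+N^{d+1-dt+o(1)}.
$$
Letting $\varepsilon\to 0$ gives the desired inequality. No genuine obstacle is expected; the only thing to double check is that Lemma~\ref{lem:frequency-f} is applied in the regime $B\in[N^{1-1/D+\varepsilon},N]$ where its hypothesis holds, which is exactly how we chose the cutoff $K$.
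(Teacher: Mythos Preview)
Your proposal is correct and follows essentially the same approach as the paper's own proof: fix $\varepsilon>0$, set the cutoff $K=N^{1-1/D+\varepsilon}$, dyadically decompose the range $[K,N]$, and apply the $d\ge 3$ case of Lemma~\ref{lem:frequency-f} to bound $\mu(\sE_B)$, after which the $B$-dependence cancels and letting $\varepsilon\to 0$ finishes. Your remark that the threshold $N^{1-1/D+\varepsilon}$ is forced by the range of validity of Lemma~\ref{lem:frequency-f} is exactly the point.
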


\begin{proof}
Let us fix some $\varepsilon>0$. Denote 
$$
K=N^{1-1/D+\varepsilon}. 
$$
 Then, similarly to the above,  by Lemma~\ref{lem:frequency-f} with $d\ge 3$ there exists $K\le B\le N$ such that 
$$
\int_{\T}   |V_f(x; M, N) |^{d+1} d\mu(x)  \le K^{d+1}+N^{o(1)}B^{d+1}\mu(\cE_{B}).
$$
By Lemma~\ref{lem:frequency-f}, we have
$$
\int_{\T}   |V_f(x; M, N) |^{d+1} d\mu(x)  \le K^{d+1} + N^{d-dt+1+o(1)}.
$$
Since  $\varepsilon$ is arbitrary, the result now follows.  
\end{proof}

\subsection{Concluding the proof} 
Similarly to the proofs of the upper bounds of   
Theorems~\ref{thm:Gauss} and~\ref{thm:Weyl}, Lemmas~\ref{lem:mean-d=2} and~\ref{lem:mean-d>2} combined 
with  Lemma~\ref{lem:G-K for our setting}
imply the desired upper bound of Theorem~\ref{thm:F_f}.

In particular,  for $d=2$ the proof is a full analogue of that of   the upper bound of Theorem~\ref{thm:Gauss} 
where we use Lemma~\ref{lem:mean-d=2} in an appropriate place instead of Lemma~\ref{lem:Lp}. 

For $d\ge 3$, as in the proofs of   
Theorem~\ref{thm:Weyl}, we consider two cases 
$$
t\le  \frac{d+1} {dD} \mand t >  \frac{d+1} {dD} .
$$
Now, by   Lemma~\ref{lem:G-K for our setting}, in the first case, similarly to~\eqref{eq:alpha bound 1},  we derive 
$$
\alpha \le \frac{d-dt+1}{d+1} 
$$
which gives the desired bound, while 
in  the second case we obtain~\eqref{eq:alpha bound 2},   which contradicts the 
assumption $\alpha\in (1-1/D, 1)$.

\section{Proofs of  Theorems~\ref{thm:poly}, \ref{thm:monom}, \ref{thm:P-S},  \ref{thm:separated} and~\ref{thm:integer-convex}}

\subsection{Mean values of exponential polynomials}
\label{subsec:mean}

For a real sequences $f(n)$ we define the sums
\begin{equation}
\label{eq:W_f}
W_{f}( \vx; N)=  \sum_{h=-N}^{N} \frac{1}{|h|+1} \left| \sum_{n=1}^{N}   \e\(h n/N+xf(n) \) \right|.
\end{equation} 
Then a special form of~\cite[ Lemma~3.2]{ChSh-IMRN} implies 
for $\vx\in \T$ and $1\le M\le N$ we have 
\begin{equation}
\label{eq:completion}
V_{f}( \vx; M) \ll W_{f}( \vx; N), 
\end{equation}
where $W_f(\vx; N)$ is given by~\eqref{eq:W_f}.

Our method is based on mean  value estimates on the sums $W_f(x; N)$. 
However our next result shows that   for  integer-valued sequences the mean value of $W_f(x; N)$ is controlled by the mean value of $V_f(x; N)$ provided that the exponent is some even integer. It follows by using a similar argument to 
the proof of~\cite[Lemma~2.4]{ChSh-TwoPar}.   We give a  proof here for completeness.

\begin{lemma} 
\label{lem:S-W mean}
Let $f\in \Z[x]$ be an integer sequence such that for some even number $s>0$ and some real $t>0$ one has 
$$
\int_{\T} |V_f(x;N)|^{s}dx\ll N^{t},
$$
then we have   
$$
\int_{\T} W_f(x;N)^{s}dx\ll N^{t} (\log N)^{s}.
$$
\end{lemma}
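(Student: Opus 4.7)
Write $S_h(x) := \sum_{n=1}^N \e(hn/N + xf(n))$, so that $W_f(x;N) = \sum_{|h|\le N} (|h|+1)^{-1} |S_h(x)|$ is a weighted average of shifted exponential sums. The plan has three steps: (i) use H\"older's inequality with the weights $(|h|+1)^{-1}$ to reduce $\int_\T W_f^s$ to a weighted sum of $\int_\T |S_h|^s$; (ii) exploit that $f$ is integer-valued (and that $s$ is even) to show the uniform in $h$ comparison $\int_\T |S_h(x)|^s\,dx \le \int_\T |V_f(x;N)|^s\,dx$; (iii) recombine, with the remaining sum over $h$ contributing only a $\log N$ factor raised to the power $s$.

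\textbf{Step 1 (H\"older).} Since $\sum_{|h|\le N} (|h|+1)^{-1} \ll \log N$, applying H\"older's inequality with respect to the weights $(|h|+1)^{-1}$ gives pointwise in $x$ the estimate
$$
W_f(x;N)^s \ll (\log N)^{s-1}\sum_{|h|\le N}\frac{1}{|h|+1}|S_h(x)|^s.
$$

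\textbf{Step 2 (integer-valued comparison).} Since $s$ is an even integer, $|S_h(x)|^s = S_h(x)^{s/2}\overline{S_h(x)}^{s/2}$ expands as an $s$-fold sum over tuples $(\vec n,\vec m)\in[1,N]^s$. Because $f(n)\in\Z$, integration over $x\in\T$ yields $\int_\T \e(x\Delta)\,dx = 1$ when $\Delta := \sum_i f(n_i)-\sum_i f(m_i) = 0$ and $0$ otherwise. Hence
$$
\int_\T |S_h(x)|^s\,dx = \sum_{(\vec n,\vec m)\in\cT_N} \e\!\left(\tfrac{h}{N}\Bigl(\sum_i n_i - \sum_i m_i\Bigr)\right),
$$
where $\cT_N = \{(\vec n,\vec m):\sum_i f(n_i)=\sum_i f(m_i)\}$. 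Setting $h=0$ recovers $\int_\T |V_f(x;N)|^s\,dx = |\cT_N|$, and the triangle inequality immediately gives, uniformly in $h$,
$$
\int_\T |S_h(x)|^s\,dx \le |\cT_N| = \int_\T |V_f(x;N)|^s\,dx \ll N^t.
$$

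\textbf{Step 3 (conclude).} Integrating the pointwise bound from Step 1 and inserting the uniform estimate from Step 2,
$$
\int_\T W_f(x;N)^s\,dx \ll (\log N)^{s-1}\sum_{|h|\le N}\frac{1}{|h|+1}\cdot N^t \ll (\log N)^s N^t,
$$
as required. The only nontrivial step is Step 2: once one realises that the integer-valued hypothesis collapses the $x$-integral to a counting problem over $\cT_N$, and that the shift phases $\e(hn/N)$ can then be discarded by a single triangle inequality, the rest is bookkeeping and the loss in replacing $V_f$ by $W_f$ is at most a power of $\log N$.
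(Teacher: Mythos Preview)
Your proof is correct and follows essentially the same approach as the paper: H\"older with the harmonic weights, then orthogonality exploiting that $f$ is integer-valued and $s$ is even, then resumming over $h$. In fact your Step~2 is slightly more careful than the paper's: the paper asserts the \emph{equality} $\int_\T |S_h|^s\,dx = |\cT_N|$, but after integrating out $x$ one is left with the phases $\e\bigl(\tfrac{h}{N}(\sum_i n_i - \sum_i m_i)\bigr)$ over $\cT_N$, which need not all equal $1$ (e.g.\ for $f(n)=n^2$, $s=4$, the solution $1^2+7^2=5^2+5^2$ has $\sum n_i \ne \sum m_i$); your use of the triangle inequality to get $\le |\cT_N|$ is the right move, and is all that is needed.
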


\begin{proof}
Write 
\begin{align*}
W_{f}( \vx; N)=  \sum_{h=-N}^{N}\left (\frac{1}{|h|+1}\right )^{1-1/s}&\left (\frac{1}{|h|+1}\right )^{1/s}\\
& \times \left| \sum_{n=1}^{N}   \e\(h n/N+xf(n) \) \right|.
\end{align*}
Applying the H\"older inequality,  we obtain 
\begin{equation}
\label{eq:WS}
\begin{split}
W_f(\vx; N)^{s}\ll \left (\log N\right )^{s-1} &\sum_{h=-N}^{N}\frac{1}{1+|h|} \\
&\quad \quad 
\quad \times \left| \sum_{n=1}^{N}   \e\(h n/N+xf(n) \) \right|^{s}.
\end{split}
\end{equation}
For any $h$ and $N$ and even number  $s$, opening the integral and applying the orthogonal property of $\e(x)$, we derive 
\begin{align*}
 \int_{\T} & \left| \sum_{n=1}^{N}   \e\(h n/N+xf(n) \) \right|^{s} dx \\
&=\#\left \{(n_1, \ldots, n_s):~1\le n_i\le N, \ \sum_{i=1}^{s/2} (f(n_i)-f(n_{s/2+i})=0 \right \}\\
&=\int_{\T} \left| \sum_{n=1}^{N}   \e\(xf(n) \) \right|^{s} dx.
\end{align*}
Combining with~\eqref{eq:WS} we obtain the desired bound.
\end{proof}

Observe that Lemma~\ref{lem:S-W mean} implies that for integer-values sequences, 
we only need to estimate the moments of the sums  $V_f(x; N)$ rather than of  $W_f(x; N)$.

We now recall some   mean value estimates on the sums $V_f(x; N)$ in~\eqref{eq:sum V} when $f\in \Z[x]$ is a polynomial.
We first recall the following  result of Hua~\cite{Hua}, see also~\cite[Section~14]{Wool}. 
\begin{lemma}
\label{lem:Hua}
Let $f\in \Z[x]$ be a polynomial with degree $d\ge 2$, then for each natural number $1\le r\le d$,
$$
\int_{T} |V_f(x;N)|^{2^r}dx\le N^{2^r-r+o(1)}.
$$
\end{lemma}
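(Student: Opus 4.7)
\textbf{Proof proposal for Lemma~\ref{lem:Hua}.}

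The plan is to prove this by induction on $r$, where the inductive step uses one round of \emph{Weyl differencing} which reduces the polynomial degree by one. The base case $r=1$ is a direct application of orthogonality: expanding
$$
\int_{\T}|V_f(x;N)|^{2}\,dx
= \#\bigl\{(n_1,n_2)\in [1,N]^{2}: f(n_1)=f(n_2)\bigr\},
$$
and since $f\in\Z[x]$ has degree $d\ge 2$, the equation $f(n_1)=f(n_2)$ forces $n_1=n_2$ for all sufficiently large $n_1,n_2$, so the count is at most $N + O(1) = N^{2-1+o(1)}$, as required.

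For the inductive step, assume the statement for exponent $2^{r-1}$ and for \emph{every} integer polynomial of degree at least $2$. Writing $\Delta_h f(n) = f(n+h)-f(n)$, which is a polynomial in $n$ of degree $d-1$ for each fixed $h \neq 0$, the standard identity reads
$$
|V_f(x;N)|^{2} \;=\; N \;+\; \sum_{1\le |h|\le N-1}\ \sum_{n \in I_h} \e\bigl(x\,\Delta_h f(n)\bigr),
$$
where $I_h\subseteq [1,N]$ is an interval of length $\le N$. Raising to the $2^{r-1}$-th power and applying H\"older's inequality to the outer sum over the $O(N)$ values of $h$ yields
$$
|V_f(x;N)|^{2^{r}}
\;\ll\; N^{2^{r-1}} + N^{2^{r-1}-1}\sum_{1\le |h|\le N-1} \Bigl|\sum_{n\in I_h}\e\bigl(x\,\Delta_h f(n)\bigr)\Bigr|^{2^{r-1}}.
$$
Integrating in $x$ and applying the inductive hypothesis to each $\Delta_h f$, which is a (non-constant) integer polynomial of degree $d-1 \ge r-1$, gives
$$
\int_{\T}\Bigl|\sum_{n\in I_h}\e\bigl(x\,\Delta_h f(n)\bigr)\Bigr|^{2^{r-1}}dx
\;\le\; N^{2^{r-1}-(r-1)+o(1)}
$$
for each $h\neq 0$. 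Summing over $h$ introduces an additional factor $N$, and combining all factors produces
$$
\int_{\T}|V_f(x;N)|^{2^{r}}\,dx
\;\le\; N^{(2^{r-1}-1)+1+(2^{r-1}-(r-1))+o(1)}
\;=\; N^{2^{r}-r+o(1)},
$$
which completes the induction.

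The main technical obstacle is the careful bookkeeping of the exponents in the H\"older step and, more delicately, verifying that $\Delta_h f$ is itself a polynomial to which the inductive hypothesis genuinely applies (in particular that its degree is at least $2$ when $r-1\ge 1$, or handling the degenerate low-degree case separately via the trivial bound $|\cdot|\le N$). The diagonal term $h=0$ contributes the $N^{2^{r-1}}$ summand above, which is absorbed because $2^{r-1}\le 2^{r}-r$ for $r\ge 1$; the remaining small cases can be verified directly.
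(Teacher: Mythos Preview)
The paper does not prove this lemma at all: it simply quotes it as a classical result of Hua~\cite{Hua} (with a modern reference to Wooley~\cite[Section~14]{Wool}) and moves on. Your proposal, by contrast, supplies the standard inductive proof via Weyl differencing, which is precisely how Hua's original argument goes. So the two ``approaches'' differ only in that the paper outsources the work and you do it.

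Your sketch is essentially correct and the exponent bookkeeping is right. A couple of small points worth tightening if you keep it: (i) in the base case the claim ``$f(n_1)=f(n_2)$ forces $n_1=n_2$ for large $n_1,n_2$'' is not literally true; what you actually need is that for each $n_1$ there are at most $d$ choices of $n_2$, giving $O_d(N)$ solutions. (ii) The inner sums after differencing run over intervals $I_h\subseteq[1,N]$ not starting at $1$, so strictly speaking they are not of the form $V_{\Delta_h f}(x;\cdot)$; but since the even moment $\int_\T|\cdot|^{2^{r-1}}dx$ counts integer solutions to an additive equation, a shift $n\mapsto n+M$ replaces $\Delta_h f$ by another integer polynomial of the same degree and the bound is unchanged. (iii) When $d-1=1$ (so $d=2$, $r=2$) the differenced polynomial is linear; here the $L^2$ bound is just $|I_h|\le N$ by orthogonality, so the induction still closes. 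You flagged this last issue yourself, and the fix is exactly the trivial/Parseval bound you indicated.
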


Wooley~\cite[Corollary 14.2]{Wool} (see also~\cite[Theorem~10]{Bourg}) obtains the following better bound when $r$ is large.

\begin{lemma}
\label{lem:BW}
Let $f\in \Z[x]$ be a polynomial with degree $d\ge 2$, then for each natural number $1\le r\le d$,
$$
\int_{T} |V_f(x;N)|^{r(r+1)}dx\le N^{r^2+o(1)}.
$$
\end{lemma}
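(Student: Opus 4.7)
The plan is to deduce this from the sharp Vinogradov mean value theorem (in the Bourgain--Demeter--Guth form for $d\ge 4$ and Wooley's form for $d=3$), by converting the integral into a Diophantine counting problem and then relating that count to the standard Vinogradov system.

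First I would expand $|V_f(x;N)|^{r(r+1)} = V_f(x;N)^{s}\overline{V_f(x;N)}^{s}$ with $s=r(r+1)/2$ and invoke orthogonality on $\T$. Writing $f(n)=c_d n^d+\ldots+c_1n+c_0$ with $c_d\neq 0$, the constant term $c_0$ drops out of $|V_f|$ and the integral becomes
\[
\int_\T |V_f(x;N)|^{r(r+1)}\,dx = \#\Bigl\{(\vn,\vm)\in[1,N]^{2s}:\ \sum_{i=1}^{s} f(n_i)=\sum_{i=1}^{s} f(m_i)\Bigr\}.
\]
Call this count $J(N)$. The task is then to show $J(N)\le N^{r^2+o(1)}$.

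The second step is the key reduction: relate the single polynomial equation to the full Vinogradov system $\sum_i n_i^j=\sum_i m_i^j$ for $j=1,\ldots,r$. I would do this via a translation/averaging trick. Replacing $n_i\mapsto n_i+h$ (and likewise for $m_i$) preserves $J(N)$ up to lower-order boundary terms, and the Taylor expansion
\[
f(n+h)=\sum_{k=0}^{d}\frac{f^{(k)}(h)}{k!}\,n^k
\]
shows that $\sum_i f(n_i+h)-\sum_i f(m_i+h)$ is a polynomial in $h$ whose coefficients are, up to nonzero rational multiples, precisely the differences $\sum_i n_i^k-\sum_i m_i^k$ for $k=0,1,\ldots,d$. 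Averaging suitably over $h$ in a short window (or invoking polynomial identity along $O(d)$ distinct values of $h$) forces the first $r+1$ of these differences to be small. This reduces $J(N)$ to the solution count of the genuine Vinogradov system of degree $r$ in $2s=r(r+1)$ variables, for which the Bourgain--Demeter--Guth/Wooley bound gives precisely $N^{2s-r(r+1)/2+o(1)}=N^{r(r+1)/2+o(1)}$. A careful tracking of the losses in the averaging (a factor of $N^{r^2-r(r+1)/2}=N^{r(r-1)/2}$) yields the claimed exponent $r^2$.

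The main obstacle will be making the reduction from a single polynomial equation (which is all the 1-parameter integral sees) to the full system of $r$ Vinogradov-type equations rigorous: passing from ``$\sum f(n_i)=\sum f(m_i)$ for one $x$'' to ``$\sum n_i^j=\sum m_i^j$ for $j=1,\ldots,r$'' costs exactly the right power of $N$ only if one correctly exploits the shift-invariance and the fact that $f$ has degree $d\ge r$ with nonzero top coefficient $c_d$ (so that the $r$ derivatives $f',\ldots,f^{(r)}$ are linearly independent as polynomials in $h$). If this averaging is set up too crudely one loses extra logarithmic or polynomial factors; the efficient congruencing or decoupling machinery can also be run directly on the level set $\{(xc_1,\ldots,xc_d):x\in\T\}$ of the moment curve as an alternative route, which is essentially the path Wooley follows in Corollary~14.2 of~\cite{Wool}.
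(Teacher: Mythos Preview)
The paper does not prove this lemma; it quotes it as Wooley~\cite[Corollary~14.2]{Wool} (see also Bourgain~\cite[Theorem~10]{Bourg}). Your opening step (orthogonality, reducing to the count $J(N)$) and your closing remark (that Wooley's argument runs the efficient congruencing machinery directly on the one-parameter problem) are both correct, and the latter is in effect all the paper does as well.

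The middle of your sketch, however, has a genuine gap. Shifting $n_i\mapsto n_i+h$ and $m_i\mapsto m_i+h$ simultaneously is a pure relabelling and returns exactly $J(N)$; varying $h$ therefore produces no new constraint. For a \emph{fixed} solution you only know that the polynomial $P(h)=\sum_i f(n_i+h)-\sum_i f(m_i+h)$ vanishes at $h=0$; nothing forces it to vanish at further points, so you cannot deduce that the individual power-sum differences $\sum_i n_i^k-\sum_i m_i^k$ are small. There \emph{is} an elementary reduction via positivity of $|S_d|^{2s}$ (each Fourier coefficient is dominated by the zeroth), giving $J(N)\ll N^{d(d-1)/2}J_{s,d}(N)$ with $J_{s,d}$ the degree-$d$ Vinogradov count; combined with the optimal Vinogradov bound this recovers the stated estimate when $r=d$, but for $r<d$ it degenerates to the trivial bound $N^{2s}$. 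Obtaining the exponent $r^2$ for $r<d$ genuinely requires running efficient congruencing or decoupling on the one-dimensional integral itself, which is precisely what the cited results of Wooley and Bourgain do, and is not a post-hoc consequence of the $d$-dimensional mean value theorem.
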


Furthermore, for the case of monomials Wooley~\cite[Corollary 14.7]{Wool} 
gives a stronger result. 

\begin{lemma}
\label{lem:Monom-W}
Let   $d\ge 2$ and  
$$
s_0=d(d-1)  + \min_{r =1, \ldots, d }\frac{2d + (r-1)(r-2)}{r}. 
$$
Then
$$
\int_{T}  \left|\sum_{n=1}^{N}\e\(xn^d\)\right|^{s_0}  dx\le N^{s_0-d+o(1)}.
$$
\end{lemma}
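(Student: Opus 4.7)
The plan is to reduce the (possibly non-integer) $s_0$-th moment to a product of integer moments via H\"older's inequality, then apply the Hua bound (Lemma~\ref{lem:Hua}) together with a monomial-specific mean value bound. Since the value $s_0 = d(d-1) + (2d+(r-1)(r-2))/r$ depends on the optimizing parameter $r \in \{1,\ldots,d\}$, I would establish the bound for each fixed $r$ separately and then take the minimum.

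For each such $r$, I would write $s_0 = \alpha \cdot 2^r + (1-\alpha) \cdot M$ with $M$ an integer exponent to be chosen and $\alpha \in (0,1)$, and apply H\"older to obtain
\[
\int_{\T} \Bigl|\sum_{n=1}^{N} \e(x n^d)\Bigr|^{s_0}\,dx \le \Bigl(\int_{\T} \Bigl|\sum_{n=1}^{N} \e(x n^d)\Bigr|^{2^r}\,dx\Bigr)^{\alpha}\Bigl(\int_{\T} \Bigl|\sum_{n=1}^{N} \e(x n^d)\Bigr|^{M}\,dx\Bigr)^{1-\alpha}.
\]
Lemma~\ref{lem:Hua} gives $N^{\alpha(2^r-r)+o(1)}$ for the first factor. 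If the second factor obeys a bound of the shape $N^{(1-\alpha)(M-d)+o(1)}$, then combining and using the identity $s_0 - d = \alpha(2^r - d) + (1-\alpha)(M - d)$ yields the claimed $N^{s_0 - d + o(1)}$. An elementary check shows that with the off-the-shelf polynomial Vinogradov bound Lemma~\ref{lem:BW} in place (take $M = d(d+1)$, exponent $d^2$), the naive interpolation produces combined exponent $s_0 - d + \alpha(d-r)$, which matches $s_0 - d$ only at the endpoint $r = d$.

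The key technical obstacle is therefore the regime $r < d$: one requires a strictly stronger mean value bound $\int_{\T} |\sum_n \e(xn^d)|^M\,dx \ll N^{M - d + o(1)}$ for some $M < d(d+1)$. Such an estimate must exploit the purely monomial structure, since solutions to the single equation $n_1^d + \cdots + n_M^d = m_1^d + \cdots + m_M^d$ lack the rigidifying lower-degree constraints $\sum n_i^k = \sum m_i^k$, $k < d$, present in the Vinogradov system, and so admit a tighter count. My approach here would be to invoke Wooley's nested efficient congruencing~\cite{Wool}, which uses $p$-adic rescaling $n \to pn + k$ to reduce counting solutions at scale $N$ to counting at scale $N/p$ with controlled multiplicity; iterated $r$ times this yields the required monomial-specific saving. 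Granted that step, the H\"older interpolation above produces $s_0 - d$ in the exponent for each admissible $r$, and minimizing over $r \in \{1, \ldots, d\}$ recovers the formula in the statement.
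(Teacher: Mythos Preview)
The paper does not prove this lemma; it is quoted directly as Corollary~14.7 of Wooley~\cite{Wool}. So there is no in-paper argument to compare against, and the only question is whether your sketch stands on its own.

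It does not. Your H\"older interpolation between Lemma~\ref{lem:Hua} and an auxiliary $M$-th moment is a reasonable framework, and you correctly compute that combining Hua's exponent $2^r - r$ with a hypothetical bound $N^{M-d+o(1)}$ on the $M$-th moment produces the exponent $s_0 - d + \alpha(d-r)$, which matches the target only at $r = d$. For $r < d$ your proposed remedy is to invoke Wooley's nested efficient congruencing to obtain a sharper monomial moment $\int_{\T} |\sum_n \e(xn^d)|^M\,dx \le N^{M-d+o(1)}$ for some $M < d(d+1)$. But that sharper bound \emph{is} exactly what~\cite[Corollary~14.7]{Wool} establishes---indeed the lemma you are trying to prove is the headline instance of it. Once you are prepared to appeal to that machinery, the H\"older step is redundant: you have not reduced the problem, only restated it. Your one-line description of nested efficient congruencing (``$p$-adic rescaling $n \to pn + k$ iterated $r$ times'') is far too thin to count as an independent argument; the actual proof in~\cite{Wool} requires careful control of auxiliary congruence systems at each iteration and is not recoverable from this summary.

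In short, the proposal is circular. Either cite~\cite[Corollary~14.7]{Wool} directly, as the paper does, or supply a genuine outline of how the efficient-congruencing iteration delivers the exponent $s_0 - d$ for each fixed $r \le d$; the H\"older wrapper contributes nothing toward the latter.
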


We now turn to mean value theorems for sums with arbitrary sequences.

In particular, we have the following simple bound on the second moment of the sums $W_f(x; N)$,  defined by~\eqref{eq:sum V}  with well-spaced sequences.

\begin{lemma}
\label{lem:mean-separated}
Let $f(n)$ be a real sequence such that $f(n)-f(m)\gg 1$ for all $m\neq n$. Then for any interval $\cI$ we have 
$$
\int_{\cI}W_f(x; N)^2 dx \ll N (\log N)^3,
$$
where $W_f(\vx; N)$ is given by~\eqref{eq:W_f}.
\end{lemma}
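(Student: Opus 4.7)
The plan is to apply Cauchy--Schwarz to the weighted sum defining $W_f(x;N)$, which reduces the problem to an $L^2$ estimate for the inner exponential sums
$$
S_h(x) = \sum_{n=1}^N \e\(hn/N + x f(n)\), \qquad |h|\le N,
$$
and then to expand $|S_h(x)|^2$ as a double sum and treat the diagonal and off-diagonal contributions separately, using the well-spacing of $f$ only for the off-diagonal part.

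More precisely, writing each term of $W_f(x;N)$ as $(|h|+1)^{-1/2}\cdot (|h|+1)^{-1/2}|S_h(x)|$ and applying Cauchy--Schwarz gives
$$
W_f(x;N)^2 \ll \log N \cdot \sum_{h=-N}^N \frac{|S_h(x)|^2}{|h|+1},
$$
since $\sum_{|h|\le N}(|h|+1)^{-1}\ll\log N$. Integrating over $\cI$ and opening the square, the diagonal $n=m$ contributes $N|\cI|\ll N$ uniformly in $h$ (with $|\cI|\le 1$, which is the relevant case), and each off-diagonal pair $n\ne m$ contributes $O(1/|f(n)-f(m)|)$ by elementary integration of the character $\e\(x(f(n)-f(m))\)$ over the interval $\cI$.

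The main step I expect is to prove the bound
$$
\sum_{n\ne m}\frac{1}{|f(n)-f(m)|}\ll N\log N.
$$
I would deduce it by sorting: let $\pi$ be the permutation of $\{1,\ldots,N\}$ with $f(\pi(1))<f(\pi(2))<\cdots<f(\pi(N))$; then the hypothesis $|f(n)-f(m)|\gg 1$ forces consecutive sorted values to be $\gg 1$ apart, and hence $|f(\pi(i))-f(\pi(j))|\gg|i-j|$ for all $i\ne j$. Reindexing the double sum yields
$$
\sum_{n\ne m}\frac{1}{|f(n)-f(m)|}\ll\sum_i\sum_{j\ne i}\frac{1}{|i-j|}\ll N\log N.
$$

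Combining the two contributions gives $\int_\cI|S_h(x)|^2\,dx\ll N\log N$ uniformly in $h$. Summing against the weights $(|h|+1)^{-1}$ over $|h|\le N$ picks up a further $\log N$, and multiplying by the outer $\log N$ from Cauchy--Schwarz produces the claimed bound $\int_\cI W_f(x;N)^2\,dx\ll N(\log N)^3$. The sorting step is really the only place where the separation hypothesis enters; apart from it, the argument is routine character integration and summation of harmonic-type series, so no further obstacle is anticipated.
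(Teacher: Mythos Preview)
Your proof is correct and follows essentially the same route as the paper: Cauchy--Schwarz on the $h$-sum, expansion of $|S_h(x)|^2$, and the bound $\sum_{n\ne m}|f(n)-f(m)|^{-1}\ll N\log N$ from the separation hypothesis. The only cosmetic difference is that the paper phrases the last step by fixing $m$ and noting that the well-spaced values $f(n)-f(m)$ give $\sum_{n\ne m}|f(n)-f(m)|^{-1}\ll\log N$, whereas you make the sorting explicit; the two are equivalent.
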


\begin{proof}
By the Cauchy inequality, we obtain 
\begin{align*}
W_f(x; N)^2& \ll \log N\sum_{h=-N}^{N} \frac{1}{|h|+1} \left| \sum_{n=1}^{N}   \e\(h n/N+xf(n) \) \right|^2\\
&\ll   \log N\sum_{h=-N}^{N} \frac{1}{|h|+1} \left ( N+\Sigma(x) \right ),
\end{align*}
where 
$$
\Sigma(x) =\sum_{1\le n\neq m\le N} \e(h(n-m)/N+x(f(n)-f(m)).
$$
The condition   $|f(n)-f(m)|\gg 1$ implies that the values $f(1), \ldots, f(N)$ are separated
from each by a unit interval, and thus so are  the values $f(1)-\zeta, \ldots, f(N)-\zeta$ for any real $\zeta$.
In particular  
$$
\sum_{\substack{n =1\\ f(n)\ne \zeta}}^N \frac{1}{|f(n)-\zeta|} \ll \log N. 
$$ 
Therefore,
\begin{align*}
\int_{\cI}\left | \Sigma(x)\right | dx & \ll \sum_{1\le n\neq m\le N} \left|\int_{\cI} \e( x(f(n)-f(m))dx \right| \\
& \ll 
\sum_{1\le n\neq m\le N} \frac{1}{|f(n)-f(m)|}\ll N\log N.
\end{align*}
Combining with~\eqref{eq:W_f} we obtain the desired bound.
\end{proof}

Suppose that $f(n)\in \N$ is a strictly convex sequence, Iosevich,   Konyagin,  Rudnev, and  Ten~\cite[Equation~(1.13)]{IKRT} (general even number $s$) and Shkredov~\cite[Theorem~1.1]{Shkredov} ($s=4$) gives the following bounds.

\begin{lemma}
\label{lem:convex}
Suppose that $f(n)\in \N$ is a strictly convex sequence, then  
$$
\int_{\T} |V_f(x;N)|^4dx\ll N^{32/13+o(1)}
$$
and for any even number $s\ge 6$ we have 
$$
\int_{\T} |V_f(x;N)|^sdx\ll N^{s-2+2^{1-s/2}}.
$$
\end{lemma}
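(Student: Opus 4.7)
The plan is to reduce the $L^s$ moment of $V_f(x;N)$ to counting solutions of an additive equation in the image set of $f$, and then to invoke the two cited results on convex sets. Since $f$ is a strictly convex integer sequence, it is strictly increasing, so the set $A = \{f(1), \ldots, f(N)\}$ has exactly $N$ elements and is a convex set in the sense of~\cite{IKRT, Shkredov}.

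First, for any even $s = 2k \ge 4$, expanding the $2k$-th power, integrating over $\T$ and applying the orthogonality relation $\int_\T \e(mx)\, dx = [m=0]$ yields
$$
\int_{\T} |V_f(x;N)|^{2k}\, dx = T_k(A),
$$
where $T_k(A)$ denotes the number of tuples $(n_1, \ldots, n_{2k}) \in \{1, \ldots, N\}^{2k}$ with $f(n_1) + \ldots + f(n_k) = f(n_{k+1}) + \ldots + f(n_{2k})$. This is exactly the $k$-fold additive energy of $A$.

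For $s = 4$ (that is, $k = 2$), $T_2(A)$ is the standard additive energy $E(A)$ of the convex set $A$, and \cite[Theorem~1.1]{Shkredov} directly gives $E(A) \ll N^{32/13 + o(1)}$. For even $s = 2k \ge 6$, the bound~\cite[Equation~(1.13)]{IKRT} gives $T_k(A) \ll N^{2k - 2 + 2^{1-k}}$, which is the claimed exponent $s - 2 + 2^{1-s/2}$.

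The main obstacle, were one to reprove rather than cite these, is the additive energy estimate for convex sets itself: the $k=2$ case hinges on a refined Szemer\'edi--Trotter style incidence argument for points on strictly convex curves, while the $k \ge 3$ case requires an inductive multiscale decomposition that loses a factor at each step, producing the $2^{1-k}$ saving. Since both are supplied by the cited references, the mean value bounds follow at once from the identification above.
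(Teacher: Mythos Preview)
Your proposal is correct and matches the paper's approach: the paper does not prove this lemma at all but simply quotes it as the cited results of Iosevich--Konyagin--Rudnev--Ten and Shkredov, and your reduction of the $L^{2k}$ moment to the $k$-fold additive energy of the convex set $A=\{f(1),\ldots,f(N)\}$ via orthogonality is exactly the standard bridge that makes those citations applicable. One minor remark: strict convexity of an integer sequence does not by itself force strict monotonicity, only that the first differences are strictly increasing integers; however, this means $f$ is strictly increasing from some bounded index onward, so $A$ differs from a genuine convex set of size $N-O(1)$ by at most $O(1)$ elements, which is harmless for the stated asymptotic bounds.
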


We note that for sequences satisfying stronger conditions than convexity stronger versions of Lemma~\ref{lem:convex} are known, see~\cite{BHR}.

We now observe that  the   result of Robert and Sargos~\cite[Theorem~2]{RoSa} implies 
the following bound. 

\begin{lemma}
\label{lem:PS-AddEnergy}
Let  $f(n) = \fl{n^\tau}$   for some $\tau\ge 1$. 
$$
\int_{\T} |V_f(x;N)|^4dx\ll N^{2+o(1)}+ N^{4-\tau+o(1)}. 
$$
\end{lemma}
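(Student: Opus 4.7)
The plan is to reduce the $L^4$ moment to an additive energy counting problem for the sequence $\fl{n^{\tau}}$ and then quote the Robert--Sargos inequality.

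First I would expand the fourth power and apply orthogonality: since $f(n)=\fl{n^\tau}$ is integer-valued,
$$
\int_{\T}|V_f(x;N)|^{4}\,dx = \#\Bigl\{(n_1,n_2,n_3,n_4)\in[1,N]^4 : \fl{n_1^{\tau}}+\fl{n_2^{\tau}} = \fl{n_3^{\tau}}+\fl{n_4^{\tau}}\Bigr\}.
$$
Writing $\fl{n^{\tau}} = n^{\tau}-\{n^{\tau}\}$ and observing that each fractional part lies in $[0,1)$, any quadruple contributing to the right-hand side must satisfy
$$
\bigl|n_1^{\tau}+n_2^{\tau}-n_3^{\tau}-n_4^{\tau}\bigr|<2.
$$
Thus the $L^4$ norm is bounded by the number of integer quadruples in $[1,N]^4$ satisfying this small-difference inequality.

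Next I would invoke the Robert--Sargos bound~\cite{RoSa}*{Theorem~2}, which (after a standard dyadic decomposition of $[1,N]$ into blocks $[2^j, 2^{j+1}]$ and summing $O(\log N)$ of these, absorbing the logarithmic losses into $N^{o(1)}$) gives
$$
\#\Bigl\{(n_1,n_2,n_3,n_4)\in[1,N]^4 : \bigl|n_1^{\tau}+n_2^{\tau}-n_3^{\tau}-n_4^{\tau}\bigr|<2\Bigr\} \ll N^{2+o(1)} + N^{4-\tau+o(1)}.
$$
Combining these two steps yields the desired bound. The case $\tau=1$ is trivial since $f(n)=n$ and the energy is exactly $O(N^{3})$, which is absorbed in $N^{4-\tau+o(1)} = N^{3+o(1)}$, and for $\tau$ slightly bigger than $1$ the term $N^{4-\tau}$ still dominates as expected.

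The only genuine work is the application of Robert--Sargos, which is a black box here; the rest is a standard reduction via orthogonality and the trivial observation that the floor function differs from $n^{\tau}$ by at most $1$. No obstacle is anticipated: the main point is simply to recognize that the $L^4$ mean is precisely the additive energy of $\{f(n)\}_{n=1}^N$ and that this additive energy is controlled by the near-collision count for $n^{\tau}$ handled in~\cite{RoSa}.
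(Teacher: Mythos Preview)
Your proposal is correct and matches the paper's approach: the paper simply states that the bound follows from Robert and Sargos~\cite[Theorem~2]{RoSa} without writing out any details, and what you have outlined is exactly the standard derivation the citation is pointing to. One small remark on the dyadic step: since Robert--Sargos is stated for quadruples in a single dyadic block $[M,2M]^4$, the cleanest way to execute the decomposition is to split $V_f(\cdot;N)$ itself into $O(\log N)$ dyadic pieces and apply the triangle inequality in $L^4$ (Minkowski), rather than trying to dyadically partition the near-collision count in $[1,N]^4$ directly; this avoids having to argue about quadruples lying in mixed dyadic ranges.
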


\subsection{Continuity of exponential polynomials}
The following is a special form of~\cite[Lemma 2.4]{ChSh-JNT}.   
\begin{lemma} 
\label{lem:cont} 
Let $f(n)$ be a real sequnce that satisfies~\eqref{eq:sigma} for some  constant $\pow>0$. Let $0<\alpha<1$ and let $\varepsilon>0$ be  sufficiently small. If 
$W_{f}(x; N)\ge N^{\alpha}$ for  some $x\in \T$,  then  
$$
W_{f}( y; N)\ge  N^{\alpha}/2
$$
holds for  any $y\in (x-\zeta, x+\zeta)$ provided  that $N$ is large enough and 
$$
0<\zeta\le N^{\alpha-\pow -1 -\eps}.
$$
\end{lemma}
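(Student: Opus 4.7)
The plan is to bound the difference $|W_f(x;N) - W_f(y;N)|$ by a pointwise triangle inequality inside the auxiliary sum~\eqref{eq:W_f}, and then apply the average growth hypothesis~\eqref{eq:sigma} to control the sum of $|f(n)|$.

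First, I would use the elementary inequality $|\e(u) - \e(v)| \le 2\pi |u - v|$ to get, for every $n$,
$$
\left| \e(x f(n)) - \e(y f(n)) \right| \le 2\pi |x - y| \, |f(n)|.
$$
Inserting this into each inner sum in~\eqref{eq:W_f} and using the reverse triangle inequality on the moduli, followed by the triangle inequality on the summation over $n$, I obtain
$$
\bigl| W_f(x;N) - W_f(y;N) \bigr| \le 2\pi |x-y| \left( \sum_{h=-N}^{N} \frac{1}{|h|+1} \right) \left( \sum_{n=1}^{N} |f(n)| \right).
$$
The first factor in parentheses is $O(\log N) = N^{o(1)}$, while by the hypothesis~\eqref{eq:sigma} the second is $O(N \cdot N^{\pow}) = N^{\pow + 1 + o(1)}$. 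Hence
$$
\bigl| W_f(x;N) - W_f(y;N) \bigr| \le |x-y| \, N^{\pow + 1 + o(1)}.
$$

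Given the hypothesis $W_f(x;N) \ge N^{\alpha}$, it suffices to ensure that the right side is at most $N^{\alpha}/2$. Choosing $|x-y| \le \zeta \le N^{\alpha - \pow - 1 - \eps}$ with $\eps > 0$ absorbs the $N^{o(1)}$ factor for $N$ large enough, which yields
$$
\bigl| W_f(x;N) - W_f(y;N) \bigr| \le N^{\alpha}/2,
$$
and therefore $W_f(y;N) \ge N^{\alpha}/2$, as required.

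There is no real obstacle here; the argument is a soft continuity estimate. The only subtlety is to carry the moduli through correctly, noting that differentiating $|z|$ is awkward but bounding $\bigl||z_1| - |z_2|\bigr| \le |z_1 - z_2|$ sidesteps the issue and keeps the $1/(|h|+1)$ weights intact, so that the sum over $h$ contributes only the harmless logarithmic factor absorbed into $N^{o(1)}$ by the $\eps$-slack in the exponent.
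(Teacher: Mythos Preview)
Your proof is correct and follows essentially the same approach as the paper: bound $|W_f(x;N)-W_f(y;N)|$ via $|\e(u)-\e(v)|\ll|u-v|$ applied termwise, pick up the harmonic sum over $h$ as a $\log N$ factor, and use~\eqref{eq:sigma} to bound $\sum_{n\le N}|f(n)|\ll N^{\pow+1}$. The paper's proof is terser but identical in substance.
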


\begin{proof}
Note that for any $x, y\in \R$ and any $h, N$ we have 
$$
\e(hn/N+xf(n))-\e(hn/N+yf(n))\ll |x-y||f(n)|.
$$
Thus we obtain 
$$
|W_f(x; N)-W_f(y;N)|\ll |x-y|\log N \sum_{n=1}^{N} |f(n)|,
$$
which yields the desired result.
\end{proof}

\begin{lemma}
\label{lem:cover}
Let $f(n)$ be a real sequnce that satisfies~\eqref{eq:sigma} for some  constant $\pow>0$. Suppose that
$$
\int_{\T} W_f(x; N)^{s} dx\le N^{t+o(1)}.
$$
Then 
$$
\left\{x\in \T:~W_f(x; N)\ge N^{\alpha}\right\}\subseteq \bigcup_{I\in \cI_N} I, 
$$
where $\cI_N$ is a collection of intervals with equal length such that 
$
|I|\le  N^{\alpha-\pow- 1-\varepsilon}
$
for each $I\in \cI_N$ and of cardinality
$$
\# \cI_N\le N^{t-s\alpha+\pow+1-\alpha+2\varepsilon}
$$ 
provided  that $N$ is sufficiently large.
\end{lemma}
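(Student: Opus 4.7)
\medskip

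\noindent
\emph{Proof proposal for Lemma~\ref{lem:cover}.}
The plan is to cover $\T$ by a uniform grid of short intervals, discard those that miss the set in question, and then use the mean value hypothesis together with the continuity estimate from Lemma~\ref{lem:cont} to count the intervals that remain. Fix $\eps>0$ and set $L = \lfloor N^{\pow+1-\alpha+\eps}\rfloor$, so that the partition of $[0,1]$ into $L$ equal intervals $I_1,\ldots,I_L$ gives a common length $|I_j| = 1/L$ slightly smaller than $N^{\alpha-\pow-1-\eps}$, as required.

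Let $\cI_N$ be the subcollection of those $I_j$ that contain at least one point $x_j$ with $W_f(x_j; N) \ge N^\alpha$. For any such interval and any $y \in I_j$, we have $|y - x_j| \le 1/L \le N^{\alpha-\pow-1-\eps}$, so Lemma~\ref{lem:cont} (applied with the same $\eps$, provided $N$ is large enough) yields $W_f(y; N) \ge N^\alpha / 2$ throughout $I_j$. The containment
$$
\{x \in \T : W_f(x; N) \ge N^\alpha\} \subseteq \bigcup_{I \in \cI_N} I
$$
is then immediate from the definition of $\cI_N$.

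To bound $\#\cI_N$ I would integrate the pointwise lower bound $W_f(y;N)\ge N^\alpha/2$ over the union of intervals in $\cI_N$: combining with the hypothesis on the $s$-th moment, one gets
$$
\#\cI_N \cdot \frac{1}{L} \cdot \Bigl(\frac{N^\alpha}{2}\Bigr)^{s}
\;\le\; \int_{\T} W_f(x;N)^s\,dx \;\le\; N^{t+o(1)}.
$$
Solving for $\#\cI_N$ and absorbing $2^s$ and the $o(1)$ into an extra factor of $N^{\eps}$ for $N$ large,
$$
\#\cI_N \;\le\; N^{t+o(1)}\cdot L \cdot N^{-s\alpha}\cdot 2^s
\;\le\; N^{t - s\alpha + \pow+1-\alpha + 2\eps},
$$
which is exactly the claimed bound.

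The only subtle step is the application of Lemma~\ref{lem:cont}: one must check that the precise choice $|I_j|\le N^{\alpha-\pow-1-\eps}$ matches the hypothesis of that lemma with the same $\eps$, and that the constant $1/2$ loss in the continuity argument is harmless (it only multiplies $\#\cI_N$ by $2^s$, absorbed into $N^{\eps}$). Everything else is routine bookkeeping of exponents.
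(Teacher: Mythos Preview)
Your proof is correct and follows essentially the same route as the paper: partition $\T$ into equal intervals of length about $N^{\alpha-\pow-1-\eps}$, use Lemma~\ref{lem:cont} to propagate the lower bound $W_f\ge N^\alpha/2$ across each selected interval, and compare the resulting contribution against the $s$-th moment hypothesis. One cosmetic slip: with $L=\lfloor N^{\pow+1-\alpha+\eps}\rfloor$ you actually get $1/L\ge N^{\alpha-\pow-1-\eps}$, not ``slightly smaller''; replacing the floor by a ceiling (as in the paper) makes the stated length bound $|I|\le N^{\alpha-\pow-1-\eps}$ literally true.
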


\begin{proof}
Let 
$$
\zeta=1/ \rf{N^{\pow+1+\eps-\alpha}}.
$$
We divide $\T$ into 
$ \zeta^{-1} $ intervals of the type $ [k\zeta, (k+1)\zeta]$ with $k=0, 1, \ldots, \zeta^{-1}-1$. Let $\cD_N$ be the collection of these intervals and 
$$
\cI_N=\{I\in \cD_N:~\exists\, x\in I \text{ such that } W_f(x; N)\ge N^{\alpha}\}.
$$
Lemma~\ref{lem:cont}  implies that  
for each $I\in \cI_N$,
$$
W_f(x; N)\ge N^{\alpha}/2, \qquad \forall x\in I.
$$
It follows that 
$$
(\# \cI_N) \zeta N^{s\alpha} \ll \int_{\T} W_f(x; N)^{s}dx\le N^{t+o(1)},
$$
which gives the desired result.
\end{proof}

\subsection{Mean values and Hausdorff dimension for polynomially growing sequences} 
We have the following general result about the upper bound on $\dim \cF_{f, \alpha}$.

\begin{lemma}
\label{lem:general}
Let $f(n)$ be a real sequence that satisfies~\eqref{eq:sigma} for some  constant $\pow>0$. Suppose that there are positive constants $s, t$  such that 
$$
\int_{\T} |W_f(x; N)|^{s} dx\le N^{t+o(1)}.
$$
Then 
$$
\dim \cF_{f, \alpha} \le \frac{\pow+1-\alpha+t-s \alpha}{\pow+1-\alpha}.
$$ 
\end{lemma}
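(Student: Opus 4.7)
The plan is to combine the completion bound~\eqref{eq:completion} with the explicit cover produced by Lemma~\ref{lem:cover}, and then carry out a straightforward $\limsup$-cover calculation of Hausdorff dimension along a dyadic sequence $N=2^k$.

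First I would pass from $V_f$ to $W_f$. Since~\eqref{eq:completion} gives $V_f(x;N)\ll W_f(x;N)$, for any $x\in\cF_{f,\alpha}$ there is an infinite sequence of integers $N$ with $W_f(x;N)\gg N^{\alpha}$. Fix any $\alpha'<\alpha$ arbitrarily close to $\alpha$; then for all sufficiently large $N$ in that infinite sequence one has $W_f(x;N)\ge N^{\alpha'}$. Hence
\[
\cF_{f,\alpha}\subseteq \bigcap_{k_0\ge 1}\bigcup_{k\ge k_0} \Bigl\{x\in\T:~W_f(x;2^k)\ge 2^{k\alpha'}\Bigr\}.
\]

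Next I would apply Lemma~\ref{lem:cover} with $N=2^k$ and exponent $\alpha'$, using our hypothesized moment bound. For a small parameter $\varepsilon>0$, this yields a family $\cI_{2^k}$ of intervals of common length
\[
|I|\le 2^{k(\alpha'-\pow-1-\varepsilon)}
\]
and cardinality
\[
\#\cI_{2^k}\le 2^{k(t-s\alpha'+\pow+1-\alpha'+2\varepsilon)}
\]
that cover $\{x:~W_f(x;2^k)\ge 2^{k\alpha'}\}$. Therefore, for every $k_0$, the collection $\bigcup_{k\ge k_0}\cI_{2^k}$ is a cover of $\cF_{f,\alpha}$ with diameters tending to $0$ as $k_0\to\infty$.

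Finally, I would bound the $\beta$-dimensional Hausdorff pre-measure. For any $\beta>0$,
\[
\sum_{k\ge k_0}\#\cI_{2^k}\cdot |I|^{\beta}\le \sum_{k\ge k_0} 2^{k\bigl(t-s\alpha'+\pow+1-\alpha'+2\varepsilon+\beta(\alpha'-\pow-1-\varepsilon)\bigr)}.
\]
This geometric series converges to $0$ as $k_0\to\infty$ provided the exponent of $2^k$ is strictly negative, i.e.
\[
\beta\,(\pow+1-\alpha'+\varepsilon)>t-s\alpha'+\pow+1-\alpha'+2\varepsilon.
\]
Thus $\dim\cF_{f,\alpha}\le \beta$ for every such $\beta$. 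Letting $\varepsilon\to 0$ and then $\alpha'\to\alpha$ yields
\[
\dim\cF_{f,\alpha}\le \frac{\pow+1-\alpha+t-s\alpha}{\pow+1-\alpha},
\]
as claimed. There is no real obstacle here; the only point requiring slight care is the $\varepsilon$–$\alpha'$ bookkeeping and checking that the denominator $\pow+1-\alpha'+\varepsilon$ is positive (which it is, since $\alpha<1$ and $\pow>0$), so that the inequality defining admissible $\beta$ is not reversed.
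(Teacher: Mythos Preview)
Your approach is essentially the same as the paper's: pass from $V_f$ to $W_f$ via~\eqref{eq:completion}, cover the dyadic level sets $\{W_f(x;2^k)\ge 2^{k\alpha'}\}$ using Lemma~\ref{lem:cover}, and bound the Hausdorff dimension by a $\limsup$-cover calculation, with the paper's $\eta\to 0$ playing the same role as your $\alpha'\to\alpha$. One minor slip: the dyadic inclusion does not follow from $W_f(x;N)\ge N^{\alpha'}$ for infinitely many \emph{arbitrary} $N$, since $W_f$ is not obviously monotone in $N$; instead apply~\eqref{eq:completion} directly in the form $V_f(x;N)\ll W_f(x;2^k)$ with $2^{k-1}\le N<2^k$, which yields $W_f(x;2^k)\gg 2^{k\alpha}$ and hence the desired inclusion.
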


\begin{proof}  For each $N\in \N$ denote 
$$
B_N=\{x\in T:~W_f(x; N)\ge N^{\alpha}\}.
$$
Let  $N_i=2^i$, $i \in \N$ and $\eta>0$.  Applying~\eqref{eq:completion} we obtain
$$
\cF_{f, \alpha+\eta}\subseteq \bigcap_{q=1}^{\infty} \bigcup_{i=q}^{\infty} B_{N_i}.
$$ 
Indeed, let $x\in \cF_{f, \alpha+\eta}$ and suppose to the contrary that    
$$
x\notin \bigcap_{q=1}^{\infty} \bigcup_{i=q}^{\infty} B_{N_i}.
$$
Then  $W_f(x; N_i)<N_i^{\alpha}$ holds for all large $N_i$. For any $N$ there 
 exists a number  $i\in \N$ such that $N_i\le N<N_{i+1}$, and for large enough $N$ by Lemma~\ref{lem:cover} we have 
$$
V_f(x; N)\ll W_f(x; N_{i+1})\ll N_{i+1}^{\alpha}\ll N^{\alpha},
$$
which contradicts our assumption.

For each $N_i$ by Lemma~\ref{lem:cover} we obtain 
$$
B_{N_i}\subseteq \bigcup_{I\in \cI_{N_i}} \cI_{N_i},
$$
where $|I|\le N^{\alpha-\pow -1-\varepsilon}$ for each $I\in \cI_{N_i}$ and $\# \cI_{N_i}\le N_i^{t-s\alpha+\pow+1-\alpha+2\varepsilon}$.

From the definition of the Hausdorff dimension, using the above notation,  we have the following inequality 
\begin{equation}\label{eq:hausdorff}
\dim \cF_{f, \alpha+\eta} \le \inf \left \{ \nu>0:~\sum_{i=1}^{\infty} \sum_{I\in \cI_{N_i}}     |I|^{\nu} <\infty\right \}.
\end{equation}

Note that 
$$
\sum_{i=1}^{\infty} \sum_{I\in \cI_{N_i}}     |I|^{\nu}\ll \sum_{i=1}^{\infty} N_i^{t-s\alpha+\pow+1-\alpha+2\varepsilon} N_i^{(\alpha-\pow-1)\nu},
$$
thus to make the series  convergent it is sufficient to have 
$$
\nu>\frac{t-s\alpha+\pow+1-\alpha+2\varepsilon}{\pow+1-\alpha}.
$$
Combining with~\eqref{eq:hausdorff} and  the arbitrary choice of $\varepsilon>0$ we obtain 
$$
\dim \cF_{f, \alpha+\eta}\le \frac{\pow+1-\alpha+t-s \alpha}{\pow+1-\alpha}.
$$
Since this  holds for any $\eta>0$, we obtain the desired bound.
\end{proof}

\subsection{Concluding the proofs}
Combining  Lemma~\ref{lem:general} (and taking $\pow=d$ for polynomial 
sequneces)   with 

\begin{itemize}
\item Lemmas~\ref{lem:S-W mean}, \ref{lem:Hua} and~\ref{lem:BW}, we obtain Theorem~\ref{thm:poly}; 
\item Lemmas~\ref{lem:S-W mean} and~\ref{lem:Monom-W}, we obtain Theorem~\ref{thm:monom};
\item    Lemmas~\ref{lem:S-W mean} and~\ref{lem:PS-AddEnergy}, we obtain Theorem~\ref{thm:P-S};
\item   Lemma~\ref{lem:mean-separated}, we obtain Theorem~\ref{thm:separated}; 
\item Lemmas~\ref{lem:S-W mean} and~\ref{lem:convex}, we obtain Theorem~\ref{thm:integer-convex}.
\end{itemize}

\section{Proofs of Theorems~\ref{thm:O(n^d)} and~\ref{thm:O(n^d) - lower}}

\subsection{Proof of Theorem~\ref{thm:O(n^d)}} 
The upper bound of Theorem~\ref{thm:O(n^d)} follows by applying Lemmas~\ref{lem:Frostman} and~\ref{lem:G-K for our setting} and  the following mean value bound of Baker~\cite[Equation~(18)]{Bak-1}.

\begin{lemma}
\label{lem:B-18}
Let $f(n)$ be a sequence of natural numbers with $f(n)=O(n^{\tau})$ for some real number $\tau>0$. Let $\mu$ be a Radon measure on $\T$ such that 
$$
\mu(B(\vx, r))\ll r^{t}
$$
holds for some $t\in (0, 1)$ and for all $x\in \T$ and $r>0$. Then   for all $M, N$ we have 
$$
\int_{\T}  |V_f(x; M, N) |^{2} d\mu(x)\ll N (M+N)^{\tau(1-t)}.
$$
\end{lemma}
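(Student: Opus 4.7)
My plan is to bound the integral via a localized Bernstein--Nikolskii inequality for the band-limited function $V_f(x;M,N)$, combined with the Frostman growth of $\mu$.

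First I would note that, under the (implicit) assumption that $f$ is injective on $\{M+1,\ldots,M+N\}$---which holds in the intended application to Theorem~\ref{thm:O(n^d)}, where $f$ is strictly increasing---$V_f(x;M,N)$ is a trigonometric polynomial whose frequencies are distinct positive integers bounded by $K:=c(M+N)^\tau$, so that $|V_f(\cdot;M,N)|^2$ is a non-negative trigonometric polynomial of degree at most $2K$. The central step I would establish is the local Nikolskii-type inequality
\[
|V_f(x;M,N)|^2 \ll K\int_{|y-x|\le 1/K}|V_f(y;M,N)|^2\,dy, \qquad x\in\T,
\]
which I would prove either by convolving $|V_f|^2$ against a non-negative kernel $F$ of $L^1$-norm $O(1)$, $L^\infty$-norm $O(K)$ essentially supported on $[-C/K,C/K]$ and reproducing polynomials of degree $\le 2K$ (e.g.\ a suitable de la Vall\'ee Poussin--Jackson hybrid), or by covering $\T$ with $O(K)$ overlapping intervals of length $1/K$ and invoking the global Nikolskii inequality $\sup h\le C(\deg h)\int_\T h$ for non-negative trigonometric polynomials on each slightly enlarged interval.

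With this in hand, I would integrate against $d\mu(x)$, switch the order of integration, and use the Frostman hypothesis $\mu(B(y,1/K))\ll K^{-t}$ to obtain
\[
\int_\T|V_f|^2\,d\mu \ll K\int_\T|V_f(y;M,N)|^2\,\mu\bigl(B(y,1/K)\bigr)\,dy \ll K^{1-t}\int_\T|V_f(y;M,N)|^2\,dy.
\]
By orthogonality and the injectivity of $f$,
\[
\int_\T|V_f(y;M,N)|^2\,dy = \#\bigl\{(n,m)\colon f(n)=f(m),\ M+1\le n,m\le M+N\bigr\}=N,
\]
and combining the two displays yields $\int_\T|V_f|^2\,d\mu\ll NK^{1-t}=N(M+N)^{\tau(1-t)}$, as claimed.

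The hard part will be establishing the local Nikolskii inequality uniformly over $x\in\T$. At a point where $|V_f|^2$ attains its global maximum it is immediate from Bernstein's inequality (the polynomial stays within a constant factor of its maximum on a $1/K$-neighborhood of any critical point), but extending it to an arbitrary point requires either the reproducing kernel argument or the covering argument above, which is the only nontrivial analytic ingredient.
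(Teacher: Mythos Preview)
The paper does not give its own proof of this lemma; it simply quotes Baker~\cite[Equation~(18)]{Bak-1}, whose argument is a large-sieve estimate for trigonometric polynomials against a Frostman measure. Your smoothing/Nikolskii approach is essentially a repackaging of the same analytic core (the standard kernel proof of the large sieve), so the two are very close in spirit.

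Your outline is correct, but one technical point needs repair. You ask for a kernel $F$ that is simultaneously non-negative, has $\|F\|_1=O(1)$, and reproduces trigonometric polynomials of degree $\le 2K$ (that is, $\widehat F(j)=1$ for $|j|\le 2K$). No such $L^1$ function exists: if $F\ge 0$ and $\widehat F(j)=\widehat F(0)$ for some $j\ne 0$, then $\int F(x)(1-\cos 2\pi jx)\,dx=0$ forces $F$ to be supported on the finite set $\{x:\e(jx)=1\}$, i.e.\ $F$ is a sum of point masses. The fix is painless: drop non-negativity and use the de~la~Vall\'ee~Poussin kernel $F=2F_{4K}-F_{2K}$ (with $F_m$ the Fej\'er kernel). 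Then $\widehat F(j)=1$ for $|j|\le 2K$, $\|F\|_1\le 3$, and $|F(z)|\ll \min\bigl(K,\,K^{-1}\|z\|^{-2}\bigr)$. From $|V_f|^2=|V_f|^2*F$ you get
\[
|V_f(x;M,N)|^2 \le \int_{\T} |V_f(y;M,N)|^2\,|F(x-y)|\,dy,
\]
and after integrating in $d\mu(x)$, swapping, and using the Frostman bound via a dyadic decomposition of $|F|$,
\[
\int_{\T}|F(x-y)|\,d\mu(x)\ll K\,\mu\bigl(B(y,K^{-1})\bigr)+\sum_{j\ge 0}K\,2^{-2j}\,\mu\bigl(B(y,2^{j}K^{-1})\bigr)\ll K^{1-t},
\]
which replaces your sharp-cutoff display $K\,\mu(B(y,1/K))$ and gives the same conclusion. (The sharp-cutoff local Nikolskii you wrote down may in fact be true, but the kernel argument does not deliver it directly, and you do not need it.)

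Your remark about injectivity is well taken: without it $\int_{\T}|V_f(y;M,N)|^2\,dy$ need not equal $N$, and the lemma as literally stated can fail (take $f$ constant). The strictly increasing hypothesis in Theorem~\ref{thm:O(n^d)} supplies exactly what is needed.
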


\subsection{Proof of Theorem~\ref{thm:O(n^d) - lower}}  
By~\eqref{eq: G UpperBound} the upper bound holds always, and hence we now prove the lower bound. For any $\tau \ge 1$, Ruzsa~\cite{Ruzsa} defines a strictly increasing sequence of natural numbers $g(n)$ with $g(n)=O(n^{\tau})$, a constant $0<\ell_0<1$ (depending on $\tau$) and a set 
$\sG \subseteq \T$ with $\dim \sG = 1-1/\tau$, having the following property. 

If $x \in \sG$ then there are infinitely many $N$ and corresponding intervals $\cI(N,x) \subseteq T$ of length $\ell_0$ such that 
\begin{equation}\label{eq:gnx}
\sum_{\substack{n=1\\ \{x g(n)\} \in \cI(N,x)}}^N 1 \ge 2 \ell_0 N,
\end{equation}
where as usual $\{u\}$ denotes the fractional part of a real $u$. 

Fix an integer 
\begin{equation}\label{eq:large H}
H > 4/\ell_0.
\end{equation}  
By~\cite[Lemma~2.7]{Bak-DI}, there is a trigonometric polynomial
$$
\varPsi_{N,x}(y) = C_0 + \sum_{0 < |k| \le H} C_k(N,x) \e(ky), 
$$
depending on $N$ and $x$ with 
\begin{equation}\label{eq:C0}
C_0 = \ell_0 + \frac{1}{H+1}
\end{equation}
and such that 
\begin{equation}\label{eq:fNxy}
\varPsi_{N,x}(y)  \ge \begin{cases}  1 & \text{if}\ y\in \cI(N,x) ,\\
0 & \text{otherwise} .
 \end{cases} 
\end{equation}
Note that since $\varPsi_{N,x}(y)  \ge 0$, for any $k\neq 0$ we have
$$
|C_k(N,x)=\left | \int_{\T}\varPsi_{N,x}(y)\e(-ky) dy \right | \le \int_{\T}\varPsi_{N,x}(y) dy=C_0,
$$
and hence 
\begin{equation}\label{eq:small C_k}
|C_k(N,x)| \le C_0, \qquad 0 < |k| \le H.
\end{equation}

Let $x \in \sG$. Then by~\eqref{eq:gnx} and~\eqref{eq:fNxy}  we have
$$
\sum_{ n=1}^N \varPsi_{N,x}\(xg(n)\) \ge 2\ell_0 N. 
$$
Thus 
$$
C_0N  + \sum_{0 < |k| \le H}\left |C_k(N,x)\right| \left| \sum_{ n=1}^N \e\(kx g(n\))\right|\ge 2 \ell_0 N. 
$$
Recalling~\eqref{eq:large H} and~\eqref{eq:C0} we see that $2\ell_0-C_0\ge \ell_0/2$. Hence 
$$
 \sum_{0 < |k| \le H} \left|C_k(N,x)\right | \left|\sum_{ n=1}^N \e\(kx g(n)\)\right|\ge \frac{\ell_0}{2} N, 
$$
which together with~\eqref{eq:small C_k}  implies that there exists  a number  $k \in \{\pm 1, \ldots, \pm H\}$ such that 
$$
 \left| \sum_{ n=1}^N \e\(kx g(n)\) \right|\ge \frac{\ell_0}{4HC_0}N.
 $$
 (we note that  the number $k$ depends on $x$  and $N$). 
 
Since for any $x\in \sG$ and $N$ such that~\eqref{eq:gnx} holds there are  finite choices of $k$, we conclude that for any $x\in \sG$ there exists a number $k\in \{\pm 1, \ldots, \pm H\}$
such that for infinitely many $N$ we have 
$$
 \left| \sum_{ n=1}^N \e\(kx g(n)\) \right|\ge \frac{\ell_0}{4HC_0}N.
 $$
That is, with  $f_k(n) = |k| g(n)$ we have
 $$
 \sG \subseteq \bigcup_{k=1}^H  \sG_{f_k, c},
 $$
 where $c=\ell_0/(4HC_0)$ and $\sG_{f, c}$ is given by~\eqref{eq:sG}.
 Therefore, for  some $k \in \{1, \ldots, H\}$ we have.
 $$
  \dim \sG_{f_k, c} \ge \dim \sG \ge 1-1/\tau.
$$
Together with the upper bound~\eqref{eq: G UpperBound}
this finishes the proof.

\section{Proof of Theorem~\ref{thm:matrix}}

\subsection{Preliminaries} 
Let $\cS=(A_n)_{n\in \N}$ be a sequence of $d\times d$  matrix. For any $\vh\in \R^d$ (which   treat  as a column vector)  and $N\in \N$ let
$$
V_{\cS, \vh}(\vx; N)=\sum_{n=1}^{N} \e\(\left \langle  \vx A_n, \vh \right\rangle\). 
$$
where $\left\langle  \vy, \vz \right\rangle$ denotes the standard scalar product. 

\begin{lemma}
\label{lem:mean-m}
Let $\cS=(A_n)_{n\in \N}$ be a sequence of $d\times d$ integer matrix such that $(A_n-A_m)$ is invertible if $n\neq m$. Then for any $\vh\in \Z^d\setminus \{{\bf 0}\}$ and $N\in \N$ we have 
$$
\int_{\T_d} \left |V_{\cS, \vh}(\vx; N)\right |^{2} d\vx=N.
$$
\end{lemma}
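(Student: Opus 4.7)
The plan is to expand the square and reduce everything to the orthogonality of characters on $\T_d$. First I would write
\begin{align*}
\left|V_{\cS, \vh}(\vx; N)\right|^{2} &= \sum_{n,m=1}^{N} \e\(\left\langle \vx A_n, \vh \right\rangle - \left\langle \vx A_m, \vh \right\rangle\) \\
&= \sum_{n,m=1}^{N} \e\(\left\langle \vx (A_n - A_m), \vh \right\rangle\) ,
\end{align*}
and then rewrite the inner pairing as $\left\langle \vx (A_n-A_m), \vh\right\rangle = \left\langle \vx, (A_n - A_m)\vh\right\rangle$, where $(A_n-A_m)\vh \in \Z^d$ since both $A_n, A_m$ and $\vh$ are integer-valued.

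Next I would integrate termwise over $\T_d$ and use the standard orthogonality relation
$$
\int_{\T_d} \e\(\left\langle \vx, \vk\right\rangle\) d\vx = \begin{cases} 1 & \text{if } \vk = \vec 0, \\ 0 & \text{otherwise,} \end{cases}
$$
for $\vk \in \Z^d$. Thus the only contributing pairs $(n,m)$ are those with $(A_n - A_m)\vh = \vec 0$.

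The key step is to show that only the diagonal pairs $n = m$ survive. Indeed, for $n \ne m$ the hypothesis that $A_n - A_m$ is invertible implies that the equation $(A_n - A_m)\vh = \vec 0$ forces $\vh = \vec 0$, contradicting $\vh \in \Z^d \setminus \{\vec 0\}$. Hence the contribution from off-diagonal pairs vanishes, while the $N$ diagonal pairs each contribute $1$, giving the claimed equality
$$
\int_{\T_d} \left|V_{\cS, \vh}(\vx; N)\right|^{2} d\vx = N.
$$
There is no real obstacle here; this is a direct orthogonality computation, and the invertibility assumption on $A_n - A_m$ is used exactly once, to kill the off-diagonal terms.
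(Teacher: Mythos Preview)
Your proof is correct and follows essentially the same approach as the paper: expand the square, use orthogonality of characters on $\T_d$ to reduce to counting pairs with $(A_n-A_m)\vh=\vec 0$, and invoke the invertibility hypothesis to eliminate the off-diagonal terms.
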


\begin{proof}
Opening the square we have 
\begin{align*}
 \int_{\T_d}&\left|\sum_{n=1}^{N} \e\(\left\langle \vx A_n, \vh \right\rangle\)\right |^{2} d\vx
\\
&=\sum_{1\le n, m\le N} \int_{\T_d}  \e\(\left\langle \vx, (A_n-A_m)\vh \right\rangle\) d\vx\\
&=N+\sum_{1\le n\neq  m\le N} \int_{\T_d}  \e\(\left\langle \vx, (A_n-A_m)\vh \right\rangle\) d\vx.
\end{align*}
 By our condition that $A_n-A_m$ is invertible when $n\neq m$, we conclude that 
$(A_n-A_m)\vh$ is a non-zero integer vector, and hence for $n\neq m$ we have
$$
\int_{\T_d}  \e\(\left\langle \vx, (A_n-A_m)\vh \right\rangle\) d\vx=0,
$$
which yields the desired identity.
\end{proof}

We have the following analogy of Lemma~\ref{lem:cont}.

\begin{lemma}
\label{lem:con-m} Let $\cS=(A_n)_{n\in \N}$ be a sequence of $d\times d$ matrices that satisfies~\eqref{eq:norm-A} for some $\tau\ge 1/d$ and  let $\vh\in \R^d, c\in (0, 1)$. Then there exists $\varepsilon>0$ such that  if $|V_{\cS,  \vh}(\vx; N)|\ge cN$ for some $\vx\in \T_d$ then 
$$
|V_{\cS, \vh}(\vx; N)|\ge cN/2
$$
holds for any $\vy\in B(\vx, \varepsilon N^{-\tau})$,  where $B(\vx, r)$ denotes the ball of $\T_d$  centered at  $\vx$ and of radius $r$.
\end{lemma}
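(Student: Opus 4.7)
The plan is to establish a Lipschitz-type bound for the map $\vx\mapsto V_{\cS,\vh}(\vx;N)$ in the variable $\vx$ and then choose $\varepsilon$ small enough that moving $\vx$ by at most $\varepsilon N^{-\tau}$ changes the value of $V_{\cS,\vh}(\vx;N)$ by no more than $cN/2$; combined with the hypothesis $|V_{\cS,\vh}(\vx;N)|\ge cN$, the reverse triangle inequality then yields the conclusion. Thus the proof is essentially a direct continuity estimate, in the spirit of Lemma~\ref{lem:cont}, adapted to the matrix setting.

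The first step is the elementary bound $|\e(u)-\e(v)|\le 2\pi|u-v|$ for real $u,v$, which, applied term-by-term to the defining sum, gives
$$
\left|V_{\cS,\vh}(\vx;N)-V_{\cS,\vh}(\vy;N)\right|\le 2\pi\sum_{n=1}^{N}\bigl|\langle(\vx-\vy)A_n,\vh\rangle\bigr|.
$$
The second step is to apply Cauchy--Schwarz and the definition of the operator norm to each inner product:
$$
\bigl|\langle(\vx-\vy)A_n,\vh\rangle\bigr|\le \|(\vx-\vy)A_n\|\cdot\|\vh\|\le \|A_n\|\cdot\|\vx-\vy\|\cdot\|\vh\|.
$$
Summing over $n$ and invoking the average growth hypothesis~\eqref{eq:norm-A}, which gives $\sum_{n=1}^{N}\|A_n\|\ll N^{\tau+1}$, we obtain the master inequality
$$
\left|V_{\cS,\vh}(\vx;N)-V_{\cS,\vh}(\vy;N)\right|\le C\,\|\vh\|\cdot\|\vx-\vy\|\cdot N^{\tau+1}
$$
for some absolute constant $C$ (depending on $d$ and the implied constant in~\eqref{eq:norm-A}).

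The final step is to pick $\varepsilon=c/(2C\|\vh\|)$ (assuming $\vh\ne\mathbf{0}$; the case $\vh=\mathbf{0}$ is trivial since $V_{\cS,\vh}\equiv N$). Then whenever $\|\vy-\vx\|\le \varepsilon N^{-\tau}$ the right-hand side of the displayed inequality is at most $cN/2$, and the reverse triangle inequality gives $|V_{\cS,\vh}(\vy;N)|\ge cN-cN/2=cN/2$ as required. The argument involves no real obstacle; the only point to be careful about is interpreting $\|A_n\|$ as the operator norm acting on row vectors, which matches the convention $\vx A_n$ used throughout the paper, and noting that the assumption $\tau\ge 1/d$ is only needed elsewhere (e.g.\ for the cardinality estimate in the proof of Theorem~\ref{thm:matrix}) and plays no role in this continuity lemma.
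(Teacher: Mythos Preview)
Your proof is correct and follows essentially the same approach as the paper's: both bound $|\e(\langle\vx A_n,\vh\rangle)-\e(\langle\vy A_n,\vh\rangle)|$ by $O(\|A_n\|\,\|\vh\|\,\|\vx-\vy\|)$, sum over $n$, apply~\eqref{eq:norm-A} to get $|V_{\cS,\vh}(\vx;N)-V_{\cS,\vh}(\vy;N)|\ll \|\vh\|\,\|\vx-\vy\|\,N^{\tau+1}$, and then choose $\varepsilon$ accordingly. Your version is somewhat more explicit (tracking the constant $2\pi$, spelling out the choice of $\varepsilon$, and disposing of the trivial case $\vh=\mathbf 0$), but the substance is identical.
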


\begin{proof}
For any $n\in \N, \vh\in \R^d$ and $\vx, \vy\in \T_d$ we have 
\begin{align*}
\e(\left\langle \vx A_n, \vh \right\rangle )-\e(\left\langle \vy A_n, \vh \right\rangle)& 
\ll \left\langle \vx A_n, \vh \right\rangle-\left\langle \vy A_n, \vh \right\rangle\\
& = \left\langle (\vx-\vy) A_n, \vh \right\rangle\ll \|A_n\| \|\vh\|\|\vx-\vy\|.
\end{align*}
It follows that
$$
V_{\cS, \vh}(\vx; N)-V_{\cS, \vh}(\vy; N)\ll  \|\vx-\vy\|  N^{\tau+1}\|\vh\|,
$$
which yields the desired bound.
\end{proof}

\begin{lemma}
\label{lem:level-m} 
Let $\cS=(A_n)_{n\in \N}$ be a sequence of $d\times d$ matrix that satisfies~\eqref{eq:norm-A} for some $\tau\ge 1/d$ and  let $\vh\in \R^d, c\in (0, 1)$.  Then there exists $\varepsilon>0$ such that 
$$
\{\vx\in \T_d:~|V_{\cS, \vh}(\vx; N)|\ge cN\} \subseteq \bigcup_{Q\in \cQ_{N}} Q, 
$$
where $\cQ_N$ is a certain collection of  equal  cubes with the side lengths $1/\fl{N^{\tau} \varepsilon^{-1}}$ and 
\begin{equation}
\label{eq:cardi}
\#\cQ_N\ll N^{d\tau-1},
\end{equation}
where the implied constant depends on $\varepsilon$.
\end{lemma}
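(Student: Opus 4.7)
My plan is to combine the $L^2$ mean value identity of Lemma~\ref{lem:mean-m} with the local continuity estimate of Lemma~\ref{lem:con-m}. First I would apply Lemma~\ref{lem:con-m} with the given $c$ to extract a constant $\varepsilon_0>0$ such that $|V_{\cS,\vh}(\vy;N)|\ge cN/2$ whenever $|V_{\cS,\vh}(\vx;N)|\ge cN$ and $\vy\in B(\vx,\varepsilon_0 N^{-\tau})$. I would then choose $\varepsilon\in(0,\varepsilon_0/\sqrt d\,]$ and partition $\T_d$ into $\lfloor N^\tau\varepsilon^{-1}\rfloor^d$ axis-parallel cubes of common side length $\ell=1/\lfloor N^\tau\varepsilon^{-1}\rfloor$, exactly matching the value prescribed in the statement.

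Next, I would let $\cQ_N$ consist of those partition cubes that meet the super-level set $\{\vx\in \T_d:|V_{\cS,\vh}(\vx;N)|\ge cN\}$, so that the desired covering is automatic. Since each cube has diameter $\sqrt d\,\ell\le \varepsilon_0 N^{-\tau}$, Lemma~\ref{lem:con-m} applied at any witness point $\vx_Q\in Q$ promotes the pointwise inequality $|V_{\cS,\vh}(\vx_Q;N)|\ge cN$ to the uniform lower bound $|V_{\cS,\vh}(\vy;N)|\ge cN/2$ for every $\vy\in Q$ and every $Q\in \cQ_N$.

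To bound $\#\cQ_N$, I would invoke Lemma~\ref{lem:mean-m} (for the case $\vh\in\Z^d\setminus\{\0\}$, which is the one that actually arises via the Weyl criterion in the proof of Theorem~\ref{thm:matrix}) and compare the global $L^2$-mass with its contribution from $\cQ_N$:
\begin{equation*}
N=\int_{\T_d}|V_{\cS,\vh}(\vx;N)|^2\,d\vx\ge \sum_{Q\in\cQ_N}\int_Q |V_{\cS,\vh}(\vx;N)|^2\,d\vx\ge \#\cQ_N\cdot (cN/2)^2\ell^d.
\end{equation*}
Since $\ell^d\gg \varepsilon^d N^{-d\tau}$, rearranging yields $\#\cQ_N\ll c^{-2}\varepsilon^{-d}N^{d\tau-1}$, which is exactly~\eqref{eq:cardi} with the implied constant depending on $\varepsilon$.

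The argument is essentially mechanical once Lemmas~\ref{lem:mean-m} and~\ref{lem:con-m} are in hand, and I do not anticipate any real obstacle. The only mildly fiddly point is the bookkeeping needed to ensure the side length is exactly $1/\lfloor N^\tau\varepsilon^{-1}\rfloor$ while still being small enough for each cube to fit inside a continuity ball from Lemma~\ref{lem:con-m}; this is handled by first producing $\varepsilon_0$ from that lemma and only then shrinking it to $\varepsilon$. The hypothesis $\tau\ge 1/d$ from Theorem~\ref{thm:matrix} is precisely what makes the counting bound $N^{d\tau-1}$ meaningful, consistent with the remark already made earlier in the paper.
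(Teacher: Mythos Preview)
Your proposal is correct and follows essentially the same approach as the paper's own proof: partition $\T_d$ into cubes of side length $1/\fl{N^\tau\varepsilon^{-1}}$, select those meeting the super-level set, use Lemma~\ref{lem:con-m} to push the lower bound $cN/2$ onto each selected cube, and then count via the $L^2$ identity of Lemma~\ref{lem:mean-m}. You are in fact slightly more careful than the paper in two places --- you explicitly shrink $\varepsilon$ so that the cube diameter fits inside the continuity ball, and you flag that Lemma~\ref{lem:mean-m} is stated only for $\vh\in\Z^d\setminus\{\0\}$ --- both of which are well observed.
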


\begin{proof} 
Divide $\T_d$ into  $\zeta^{-d}$ interior disjoint equal cubes in a natural way such 
that each cube has side length $\zeta= 1/\fl{N^{\tau}\varepsilon^{-1}}$, and let $\cD_n$ 
be a collection of these cubes. 
Let 
$$
\cQ_N=\{Q\in \cD_n:~\exists\,  \vx\in Q, \ \text{such that}\   |V_{\cS, \vh}(\vx; N)|\ge cN\}.
$$
It is sufficient to show that $\cQ_N$ satisfies~\eqref{eq:cardi}. For any $Q\in \cQ_N$ by  Lemma~\ref{lem:con-m} we have  $|V_{\cS, \vh}(\vx; N)|\ge cN/2$ for all $\vx\in Q$. 
Hence 
$$
N^2 \# \cQ_N  N^{-d\tau}\varepsilon^{d}\ll \int_{\cQ_N}  |V_{\cS, \vh}(\vx; N)|^2 d \vx \le  \int_{\T^d}  |V_{\cS, \vh}(\vx; N)|^2 d \vx. 
$$ 
Combining with the mean value bound Theorem~\ref{lem:mean-m} we derive 
$$
N^{2} N^{-d\tau} \# \cQ_N\ll N,
$$
which implies the desired bound.
\end{proof}

We remark that the condition $\tau\ge 1/d$ in Lemma~\ref{lem:level-m} comes from the
inequality~\eqref{eq:cardi}.

\subsection{Concluding the proof}
We now turn to the proof of Theorem~\ref{thm:matrix}.  Let $\cS=(A_n)_{n\in \N}$ satisfy the condition of Theorem~\ref{thm:matrix}.  For $c>0$ and $\vh\in \R^d$ define 
$$
\cG_{\cS, \vh, c}=\{\vx\in \T_d: |V_{\cS, \vh}(\vx; N)|\ge cN \text{ for infinitely many } N\in \N\}.
$$
By using the \emph{Weyl criterion}  (see~\cite[Section~1.2.1]{DrTi}) and \emph{the countable stability} of Hausdorff dimension (see~\cite[Section~2.2]{Falconer}), it is sufficient to prove that for any $c>0$ and any non-zero vector $\vh\in \Z^{d}$ one has 
$$
\dim \cG_{\cS, \vh, c} \le d-1/\tau.
$$

For $N\in \N$ denote 
$$
B_N=\{\vx\in \T_d: V_{\cS, \vh}(\vx; N)|\ge cN/2\}.
$$
Let $\beta>1$ and $N_i=i^{\beta}$. Then we have 
\begin{align*}
\cG_{\cS, \vh, c}\subseteq \bigcap_{k=1}^{\infty}\bigcup_{i=k}^{\infty} B_{N_i}.
\end{align*}
Indeed let $\vx\in \cG_{\cS, \vh, c}$ and suppose to the contrary that for all large enough $N_i$ we have 
$$
|V_{\cS, \vh}(\vx; N_i)|<cN_i/2.
$$
For any large $N$ there is $i\in \N$ such that 
$N_i\le N<N_{i+1}$. Observe that  $N_{i+1}-N_i=O(i^{\beta-1})$ for all $i\in \N$, and 
\begin{align*}
|V_{\cS, \vh}(\vx; N)|&\le |V_{\cS, \vh}(\vx; N_i)|+N_{i+1}-N_i\\
&\le cN_i/2+O(i^{\beta-1})\le 2c N/3
\end{align*} 
provided that $N$ is large enough, which contradicts our assumption that $\vx\in \cG_{\cS, \vh, c}$. 

Let $\varepsilon>0$ be the same on as in Lemma~\ref{lem:level-m}.  For each $N_i$ by Lemma~\ref{lem:level-m} we obtain 
$$
B_{N_i}\subseteq \bigcup_{Q\in \cQ_{N_i}} Q,
$$
where each $Q\in \cQ_{N_i}$ has side length $1/\fl{N^{\tau} \varepsilon^{-1}}$ and $\# \cQ_{N_i}\ll N_i^{d\tau-1}$.
From the definition of Hausdorff dimension we obtain 
\begin{equation}
\label{eq:upper-m}
\dim \cG_{\cS, \vh, c}\le \inf  \left \{\nu>0: \sum_{i=1}^{\infty}\sum_{Q\in \cQ_{N_i}} N_i^{-\tau\nu}<\infty \right \}.
\end{equation}
Note that 
$$
\sum_{i=1}^{\infty}\sum_{Q\in \cQ_{N_i}} N^{-\tau\nu} \le \sum_{i=1}^{\infty} N_{i}^{d\tau-1}N_i^{-\tau\nu},
$$
thus the series is convergent provided 
$$
\beta(d\tau-1-\tau\nu) <-1,
$$
which is equivalent to 
$$
\nu>d-\frac{1}{\tau}+\frac{1}{\beta \tau}.
$$
Combining with~\eqref{eq:upper-m} we obtain 
$$
\dim \cG_{\cS, \vh, c}\le d-\frac{1}{\tau}+\frac{1}{\beta \tau},
$$
and by the arbitrary choice of  $\beta>1$ we obtain the desired bound.


\section*{Acknowledgement}

During preparation of this work,  I.S. was  supported   by ARC Grant DP170100786.

 \end{document}